\title{On Average Risk-sensitive Markov Control Processes}
\author{Yun Shen\thanks{Fakult\"at Elektrotechnik und Informatik, Technische Universit\"at Berlin,  Marchstr.~23, 10587, Berlin, Germany ({\tt yun@ni.tu-berlin.de}).} 
 \and Klaus Obermayer\thanks{Fakult\"at Elektrotechnik und Informatik, and Bernstein Center for Computational Neuroscience, Technische Universit\"at Berlin, Marchstr.~23, 10587, Berlin, Germany ({\tt oby@ni.tu-berlin.de}). The work of this author
and the first author was supported by the BMBF, Bernstein Fokus Lernen TP1, 01GQ0911.} \and Wilhelm Stannat\thanks{Institut f\"ur Mathematik, and Bernstein Center for Computational Neuroscience, Technische Universit\"at Berlin, Stra{\ss}e des 17.~Juni 136, 10623, Berlin, Germany ({\tt stannat@math.tu-berlin.de}). The work of this author is supported by the BMBF, FKZ01GQ1001B.}}
\newtheorem{theorem}{\sc Theorem}[section]
\newtheorem{lemma}[theorem]{\sc Lemma}
\newtheorem{proposition}[theorem]{\sc Proposition}
\newtheorem{definition}[theorem]{\sc Definition}
\newtheorem{corollary}[theorem]{\sc Corollary}
\theoremstyle{remark}
\newtheorem{remark}[theorem]{{\it Remark}}
\newtheorem{assumption}[theorem]{{\it Assumption}}
\newtheorem{example}[theorem]{{\it Example}}
\newcommand{\diff}{\textrm{d}}
\newcommand{\bs}{\boldsymbol}
\newcommand{\lya}[1]{(A#1)}
\newcommand{\os}[1]{(C#1)}
\newcommand{\ue}[1]{(B#1)}
\date{July 22, 2015}
\begin{document}
 \maketitle
 
\begin{abstract}
We introduce the Lyapunov approach to optimal control problems of average risk-sensitive Markov control processes with general risk maps. Motivated by applications in particular to behavioral economics, we consider possibly non-convex risk maps, modeling behavior with mixed risk preference. We introduce classical objective functions to the risk-sensitive setting and we are in particular interested in optimizing the average risk in the infinite-time horizon for Markov Control Processes on general, possibly non-compact, state spaces allowing also unbounded cost. Existence and uniqueness of an optimal control is obtained with a fixed point theorem applied to the nonlinear map modeling the risk-sensitive expected total cost. The necessary contraction is obtained in a suitable chosen seminorm under a new set of conditions: 1) Lyapunov-type conditions on both risk maps and cost functions that control the growth of iterations, and 2) Doeblin-type conditions, known for Markov chains, generalized to nonlinear mappings. In the particular case of the entropic risk map, the above conditions can be replaced by the existence of a Lyapunov function, a local Doeblin-type condition for the underlying Markov chain, and a growth condition on the cost functions.
\end{abstract}

{\small 
{\noindent\bf Keywords.} Markov control processes, Poisson equation, risk-sensitive control, risk measures, stability of nonlinear operators, Doeblin condition, Lyapunov stability. 
\vskip10pt
{\noindent \bf AMS.} 60J05, 93E20, 93C55, 47H07, 91B06}

\section{Introduction}
The average cost (or equivalently, ergodic cost) criterion is a popular infinite-time horizon criterion for optimizing stochastic dynamic systems that are typically modeled by \emph{Markov control processes} (MCPs, see, e.g., \cite{hernandez1996discrete, hernandez1999further}, and \cite{puterman1994markov} under the name \emph{Markov decision processes}). The research (see e.g., \cite{arapostathis1993discrete} and \cite[Chapter 10]{hernandez1999further} for a comprehensive survey) focuses on the expected long-run average cost, which is a risk-neutral criterion.

The aim of this paper is to consider a general risk-sensitive criterion. So far, most of the long-run average risk-sensitive (especially, risk-averse) control employs the exponential utility function \cite{howard1972risk,chung1987discounted, avila1998controlled, borkar2002risk, di2008infinite,coraluppi2000mixed,fleming1997risk,hernandez1996risk,jaskiewicz2007,
marcus1997risk, cavazos2010optimality}.

In recent years, pioneered by \citet{ruszczynski2010risk}, many authors (\cite{bauerle2013more,Shen2013, Cavus2014}) have developed a more general framework of risk-sensitive sequential control problems on Borel spaces by applying \emph{coherent/convex risk measures}, which were originally employed in mathematical finance by \citet{artzner1999coherent,follmer2002convex}, to the classical risk-neutral MCPs. In particular, the exponential utility function can be viewed as a special convex (but not coherent) risk measure, called \emph{entropic measure} \cite{follmer2002convex}. Among them, \citet{ruszczynski2010risk} considered coherent risk measures with the finite-horizon and discounted criteria, \citet{bauerle2013more} considered convex risk measures with bounded costs,  and \citet{Cavus2014} considered coherent risk measure with total undiscounted criterion for transient MCPs. In this paper, we shall apply a more general family of risk measures to ergodic MCPs on Borel state-action spaces equipped with possibly unbounded costs, and solve the optimization problem corresponding to the average risk-sensitive criterion.

In the same setting, \citet{Shen2013} introduced the concept of \emph{risk map} that generalizes the one-step conditional risk measure (see e.g., \cite{ruszczynski2010risk}). Weighted norm spaces were then applied to incorporate possibly unbounded costs and Lyapunov-type stability conditions that generalized known conditions for Markov chains were stated to ensure the existence of solutions to the optimality equation for the average risk-sensitive criterion. More specifically, to deal with unbounded costs and nonlinearity of risk maps, \cite{Shen2013} introduced a dominating coherent risk map (called \emph{upper module}) and assumed the existence of a Lyapunov function to it. However, given a non-coherent risk map, e.g., the entropic measure, this dominating risk map may be infinite for unbounded cost functions. Hence, a (real-valued) Lyapunov function need not exist and the theory developed in \cite{Shen2013} is reduced to be valid only for bounded costs.

In order to solve the above mentioned problem within the same framework, we introduce in this paper 1) constraints both on the cost function and risk map to control the growth of \emph{value iterations} and 2) additional minorization properties on bounded level-sets (small sets) that generalize the standard Doeblin condition. Under these conditions, we show the existence of a \emph{bounded forward invariant subset} that covers the whole iterations. Restricted to the bounded subset, we assume the existence of the Lyapunov function for the weaker type of dominating map, which is called \emph{upper envelope} in this paper, to ensure the existence of a unique solution to the optimality equation for the average-risk criterion. As a special case, we show that, when applying the entropic map, the above conditions are satisfied, if 1) a Lyapunov function exists for the entropic measure, 2) the local Doeblin condition holds for the underlying Markov chain, and 3) a growth condition for cost functions.


Most of the existing literature on risk-sensitive MCPs, especially that applies the entropic map, considers finite or countable state spaces (see, e.g., \cite{avila1998controlled, borkar2002risk,coraluppi2000mixed,fleming1997risk, hernandez1996risk,
marcus1997risk, cavazos2010optimality}), or bounded cost functions (see, e.g., \cite{bauerle2013more}). Comparing with the literature \cite{masi2000infinite, jaskiewicz2007, di2008infinite} of the same settings, i.e., Borel spaces and unbounded cost functions, we provide in this paper a more general framework which can be applied to all types of risk maps, and more importantly, with a conceptually simpler proof, whereas the methods developed in \cite{masi2000infinite, jaskiewicz2007, di2008infinite} can only be applied to the entropic map. Moreover, the conditions we state for the entropic map are simpler than the conditions stated in \cite{masi2000infinite, jaskiewicz2007, di2008infinite}.

The paper is organized as follows. In Section \ref{sec:model}, we briefly review the framework of MCPs and Lyapunov approach for ergodic MCPs with the weighted norm space, followed by an introduction of risk maps within the context of MCPs. In Section \ref{sec:obj}, the average risk criterion is formally defined and two sets of general conditions are stated to ensure the boundedness of iterations and the geometric contraction. We then prove the existence and uniqueness of a solution to the associated nonlinear Poisson equation, as well as the average risk criterion. As a special case, we show in Section \ref{sec:entropic} that under proper verifiable assumptions, the entropic map fits the theoretical framework developed in the previous section. Finally, in order to demonstrate the applicability of our theory, we show in Section \ref{sec:examples} that applied to a canonical MCP, various types of risk maps fit also the developed theoretical framework.


\subsubsection*{Notation}  Let $\mathsf X$ be a \emph{Borel space}, that is a Borel subset of a complete separable metric space, and $\mathcal B(\mathsf X)$ its Borel $\sigma$-algebra. Let $\mathsf X$ and $\mathsf Y$ be two Borel spaces. A \emph{stochastic kernel on $\mathsf X$ given $\mathsf Y$} is a function $\psi(\mathsf B|y), \mathsf B \in \mathcal B(\mathsf X), y \in \mathsf Y$ such that i) $\psi(\cdot|y)$ is a probability measure on $\mathcal B(\mathsf X)$ for every fixed $y \in \mathsf Y$, and ii) $\psi(\mathsf B|\cdot)$ is a measurable function on $\mathsf Y$ for every fixed $\mathsf B \in \mathcal B(\mathsf X)$. Let $\bar{\mathbb R} := \mathbb R \cup \{ \pm \infty \}$ be the extended real line. 
 
\section{Model and problem formulation}\label{sec:model}
\subsection{Markov control processes} \label{sec:mcps}
In this subsection, we briefly introduce the framework of Markov control processes, where we mostly follow the notation by \citet{hernandez1999further}. A Markov control process, $(\mathsf X, \mathsf A,\{\mathsf A(x)|x \in \mathsf X \},Q,c)$, consists of  the following components: \emph{state space} $\mathsf X$ and \emph{action space} $\mathsf A$, which are Borel spaces; the feasible action set $\mathsf A(x)$, which is a nonempty Borel space of $\mathsf A$, for a given state $x \in \mathsf X$; the \emph{transition model} $Q(\mathsf B|x,a), \mathsf B \in \mathcal B(\mathsf X), (x,a) \in \mathsf K$: a \emph{stochastic kernel} on $\mathsf X$ given $\mathsf K$, where $\mathsf K$ denotes the set of feasible state-action pairs $\mathsf K:=\{ (x,a)|x \in \mathsf X, a \in \mathsf A(x)\}$, which is a Borel subset of $\mathsf X \times \mathsf A$; and the \emph{cost function} $c$: $\mathsf K \rightarrow \mathbb R$, $\mathcal B(\mathsf K)$-measurable. Random variables are denoted by capital letters, whereas realizations of the random variables are denoted by lowercase letters. The process is assumed to be Markov, i.e., for each $t \in \mathbb N$, $\mathbb P(X_{t+1} \in \mathsf B|X_t=x, A_t =a, X_{t-1}, \ldots, X_0, A_0) = Q(\mathsf B|x,a)$, $\forall \mathsf B \in \mathcal B(\mathsf X)$ and $(x,a) \in \mathsf K$. 


We restrict in this paper to \emph{Markov policies}, $\boldsymbol \pi = [ \pi_0,\pi_1,\pi_2,\ldots]$, where each \emph{single-step policy} $\pi_t(\cdot |x_t)$, which denotes the probability of choosing action $a_t$ at $x_t$, $(x_t,a_t) \in \mathsf K$, is Markov (independent of the states and actions before $t$) and, therefore, a stochastic kernel on $\mathsf A$ given $\mathsf X$. We use the boldface to represent a sequence of policies while using the normal typeface for a single-step policy. Let $\Delta$ denote the set of all stochastic kernels on $\mathsf A$ given $\mathsf X$, $\mu$, such that $\mu(\mathsf A(x)|x) = 1$ and $\Pi_M := \Delta^\infty$ denotes the set of all Markov policies. A policy $f \in \Delta$ is \emph{deterministic} if for each $x\in \mathsf X$, there exists some $a\in \mathsf A(x)$ such that $f(\{a\}|x) = 1$. Let $\Delta_D \subset \Delta$ denote the set of all deterministic single-step policies. A policy $\boldsymbol \pi $ is said to be \emph{stationary}, if $\boldsymbol \pi = \pi^\infty$ for some $\pi \in \Delta$. For each $x \in \mathsf X$ and single-step policy $\pi \in \Delta$, define
\begin{align}
 c^\pi(x) := \int_{\mathsf A(x)} c(x,a) \pi(\diff a|x), P^\pi(\mathsf B|x) := \int_{\mathsf A(x)} Q(\mathsf B|x,a) \pi(\diff a|x), \mathsf B \in \mathcal B(\mathsf X).
 \label{eq:cpi}
\end{align}

The following \emph{average cost} (see, e.g., \citet{arapostathis1993discrete} and \citet[Chapter 10]{hernandez1999further}) is used as an objective: 
\begin{equation}
 S := \limsup_{T \rightarrow \infty} \dfrac{1}{T} S_T, \textrm{ where } S_T := \sum_{t=0}^T c(X_t,A_t). \label{eq:totalreward}
\end{equation}
The optimization problem is then to minimize the expected objective
\begin{equation}
 \inf_{\boldsymbol \pi \in \Pi_M} \mathbb E^{\boldsymbol \pi} \left[ S | X_0 = x\right] \label{eq:obj}
\end{equation}
by selecting a policy $\boldsymbol \pi$. We notice that due to the Markov property, the finite-stage objective function can be decomposed as follows,
\begin{align}
 \mathbb E^{\boldsymbol \pi}_{X_0} [ S_T  ] = c^{\pi_0}(X_0) +  \mathbb E^{\pi_0}_{X_0} [
   c^{\pi_1}(X_1) + \mathbb E^{\pi_1}_{X_1} [ & c^{\pi_2}(X_2) + \ldots + \mathbb
     E^{\pi_{T-1}}_{X_{T-1}} \left[ c^{\pi_T}(X_T) \right] \ldots  ]
 ], \label{eq:decompose}
\end{align}
where $ \mathbb E^{\pi_t}_{X_t}\left[ v(X_{t+1}) \right]$ denotes the \emph{conditional expectation} of the function $v$ of the successive state $X_{t+1}$ given current state $X_t$.

\subsection{Lyapunov approach for ergodic MCPs}
It is known that the optimization problem of the average cost criterion is closely related to the $w$-ergodicity of underlying Markov processes (see e.g., \cite[Chapter 10]{hernandez1999further}). This is usually established by finding a Lyapunov function $w$ with ``small'' level-sets \cite[Chapter 14]{meyn1993markov}. If the Lyapunov function is strong enough, the transition kernels converge exponentially fast towards the unique invariant measure. Among other variations \cite{kontoyiannis2005large,douc2004quantitative,del2003contraction}, \citet{hairer2011yet} stated a simplified version of the conditions, which is the main approach that we will follow and extend in this paper.

\subsubsection*{Weighted norm} We first introduce the weighted norm and seminorm. 
Let $w: \mathsf X \rightarrow [1,\infty)$ be a given real-valued $\mathcal B(\mathsf X)$-measur\-able function. Consider the $w$-norm
$$\lVert u \rVert_{w} := \sup_{x \in \mathsf X} \dfrac{\lvert u(x) \rvert }{w(x)}.$$
Let $\mathscr B_w$ be the space of real-valued $\mathcal B(\mathsf X)$-measurable functions with bounded $w$-norm. It is obvious that $\mathscr B \subset \mathscr B_w$, where $\mathscr B$ denotes the space of bounded $\mathcal B(\mathsf X)$-measur\-able functions.  Throughout this paper we call $w$ interchangeably a \emph{weight function} or a \emph{Lyapunov function}. One notable property is as follows: if $w_0 : \mathsf X \rightarrow [0,\infty)$ is a $\mathcal B(\mathsf X)$-measur\-able function, then $1+\beta w_0$ is a valid weight function for any positive coefficient $\beta$. Furthermore, $\mathscr B_{1+ \beta_1 w_0} = \mathscr B_{1+ \beta_2 w_0}$ for any two positive coefficients, $\beta_1$ and $\beta_2$. This property will be used several times throughout this paper.


For a given signed measure $\mu$ on $\mathcal B(\mathsf X)$ with $\int w\, \diff |\mu|
< \infty$, the integral $\int u\, \diff\mu$ is well-defined for all $u\in\mathcal B_w$. Let 
$$
\lVert \mu \rVert_w := \sup_{\lVert u \rVert_w \leq 1} \lvert \int_{\mathsf X} u \diff \mu \rvert
$$  
and note that 
$$
\lVert \mu \rVert_w = \int_{\mathsf X} w \diff \lvert \mu \rvert \geq \lVert \mu \rVert_{TV},
$$ 
where $\lVert \cdot \rVert_{TV}$ denotes the total variation norm. Let $\mathscr P$ be the space of all probability measures $\mu$ on $\mathcal B(\mathsf X)$ and $\mathscr P_w :=\{ \mu \in \mathscr P | \int w\, \diff|\mu|  < \infty \}$. For any $\mu \in \mathscr P_w$ and $f \in \mathscr B_w$, we introduce the following notation $$\mu[f] := \int_{\mathsf X} f(x) \mu(\diff x).$$

The following \emph{$w$-seminorm} is used throughout this paper:
\begin{align*}
 \lVert v \rVert_{s,w} := \sup_{x \neq y} \dfrac{\lvert v(x) - v(y) \rvert}{w(x)+w(y)}.
\end{align*}
When restricting to the space $\mathscr B$, i.e., setting $w \equiv 1$, the seminorm is called \emph{span-norm} in \cite{hernandez1989adaptive} or \emph{Hilbert seminorm} in \cite{gaubert2004perron}. The following lemma (see \cite[Lemma 2.1]{hairer2011yet}) establishes the connection between the weighted norm and seminorm. 
\begin{lemma}\label{lm:semi}
$\lVert v \rVert_{s,w} = \min_{c \in \mathbb R} \lVert v + c \rVert_w, \forall v \in \mathscr B_w$.
\end{lemma}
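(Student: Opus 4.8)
The plan is to establish the two inequalities $\lVert v\rVert_{s,w}\le\inf_{c}\lVert v+c\rVert_w$ and $\lVert v\rVert_{s,w}\ge\lVert v+c^*\rVert_w$ for a suitably chosen constant $c^*$, where the second inequality simultaneously shows that the infimum over $c$ is attained, hence a genuine minimum.

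First I would dispose of the easy inequality. For any $c\in\mathbb R$ and any $x\ne y$, writing $v(x)-v(y)=(v(x)+c)-(v(y)+c)$ and applying the triangle inequality together with the definition of the $w$-norm gives $\lvert v(x)-v(y)\rvert\le\lvert v(x)+c\rvert+\lvert v(y)+c\rvert\le\lVert v+c\rVert_w\,(w(x)+w(y))$. Dividing by $w(x)+w(y)$ and taking the supremum over $x\ne y$ yields $\lVert v\rVert_{s,w}\le\lVert v+c\rVert_w$ for every $c$; note that $v+c\in\mathscr B_w$ because $w\ge 1$ makes constant functions bounded in $w$-norm.

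The substance lies in producing a constant that attains the bound. Set $s:=\lVert v\rVert_{s,w}$. The requirement $\lVert v+c\rVert_w\le s$ is equivalent to $-s\,w(x)\le v(x)+c\le s\,w(x)$ for all $x$, that is, to $c$ lying in the interval $[L,U]$ with $L:=\sup_x(-v(x)-s\,w(x))$ and $U:=\inf_x(s\,w(x)-v(x))$. I would show this interval is nonempty and finite. Nonemptiness, $L\le U$, is exactly the seminorm bound: for all $x,y$ one has $\lvert v(x)-v(y)\rvert\le s(w(x)+w(y))$, which rearranges to $-v(x)-s\,w(x)\le s\,w(y)-v(y)$, so every lower-bound quantity is dominated by every upper-bound quantity. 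Choosing any $c^*\in[L,U]$ then gives $\lVert v+c^*\rVert_w\le s$, and combined with the easy inequality we conclude $\lVert v+c^*\rVert_w=s$, so $\inf_c\lVert v+c\rVert_w$ is a minimum equal to $\lVert v\rVert_{s,w}$.

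The main obstacle is the finiteness of $L$ and $U$, since a priori $w$ is unbounded: the naive bound $\lvert v(x)\rvert\le\lVert v\rVert_w\,w(x)$ only controls these quantities by $(\lVert v\rVert_w-s)\,w(x)$, which need not stay bounded when $s<\lVert v\rVert_w$. To overcome this I would fix a reference point $x_0$ and evaluate the seminorm bound against $x_0$, obtaining $-v(x)-s\,w(x)\le -v(x_0)+s\,w(x_0)$ and $s\,w(x)-v(x)\ge -v(x_0)-s\,w(x_0)$ for all $x$. The unbounded term $s\,w(x)$ cancels precisely in both estimates, so $L$ and $U$ are finite reals and the argument closes.
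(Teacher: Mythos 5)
Your proof is correct and complete: the triangle-inequality direction gives $\lVert v\rVert_{s,w}\leq\lVert v+c\rVert_w$ for every $c$, and the seminorm bound shows the interval $[L,U]$ of admissible centering constants is nonempty (with finiteness of $L$ and $U$ following from evaluating that same bound against a fixed reference point, or simply from $-v(x_0)-sw(x_0)\leq L\leq U\leq sw(x_0)-v(x_0)$), so the infimum is attained and equals $\lVert v\rVert_{s,w}$. The paper itself offers no proof of this lemma --- it is quoted from \citet[Lemma~2.1]{hairer2011yet} --- and your argument is essentially the standard proof from that reference, so there is nothing substantive to contrast.
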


The partial ordering ``$\leq$'' between elements in $\mathscr B_w$ is defined as follows: we say $v \leq u$ if $v(x) \leq u(x)\ \forall x \in \mathsf X$. A real number $u \in \mathbb R$ can be viewed as a constant-valued function which belongs also to $\mathscr B_w$.

\subsubsection*{Ergodicity conditions}
We adapt the sufficient conditions for ergodicity of Markov chains stated by \citet{hairer2011yet} to the MCP-framework.
\begin{assumption}\label{assp:standard}
 \rm \begin{description}
 \item[\rm (i)] There exists a function $w: \mathsf X \rightarrow [0,\infty)$, which is $\mathcal
  B(\mathsf X)$-measurable, and constants $K \geq 0$ and $\gamma \in (0,1)$ such that
 \begin{align*}
  \int Q(\diff y | x,a)w(y) \leq \gamma w(x) + K, \forall (x,a) \in \mathsf K
 \end{align*}
 \item[\rm (ii)] There exists a constant $\alpha \in (0,1)$ and a probability measure $\mu$ so that 
 \begin{align*}
  Q(\mathsf C|x,a) \geq \alpha \mu(\mathsf C), \forall \mathsf C \in \mathcal B(\mathsf X), x \in \mathsf B, a \in \mathsf A(x)
 \end{align*}
with $\mathsf B := \{x \in \mathsf X: w(x) \leq R \}$ for some $R > \frac{2K}{1-\gamma}$.
\end{description}
\end{assumption}
Here, (i) is a Lyapunov-type condition that controls the growth of iterations with a Lyapunov function $w$, while (ii) is a Doeblin-type condition (see e.g., \cite{levin2009markov}) that assumes a support $\mu$ uniformly on the bounded level-set $\mathsf B$.

Under this assumption, a direct extension of Theorem 3.1 in \cite{hairer2011yet} shows that there exist constants $\bar \alpha \in(0,1)$ and $\beta > 0$, both depending on $\gamma, K$ and $\alpha$, such that 
\begin{align}
\lVert P^\pi [v] \rVert_{s,1+ \beta w} \leq \bar \alpha \lVert v \rVert_{s,1+\beta w}, \forall v \in \mathscr B_{1+\beta w}, \pi \in \Delta. \label{eq:mc:contraction}
\end{align}
This ensures the ergodicity of the underlying MCPs (cf.\ \cite[Chapter 10]{hernandez1999further}) and also guarantees the geometric convergence of value iterations. In Section \ref{sec:obj}, we shall generalize this set of conditions to nonlinear operators. 

\subsection{Risk maps}

\subsubsection*{Definition}\label{sec:riskmap} Inspired by risk measures applied in mathematical finance \cite{artzner1999coherent,follmer2002convex}, we introduce in the following our version of risk measures on the weighted norm space $\mathscr B_w$ and the probability measure space $\mathscr P$.

\begin{definition}\label{def:risk:measure}
 A mapping $\nu: \mathscr B_w \times \mathscr P \rightarrow \bar{\mathbb R}$ is said to be a \emph{risk measure} if it satisfies that for each $\mu \in \mathscr P$, 
 \begin{description}
  \item[\rm\it(i)] (monotonicity) $\nu(v, \mu) \leq \nu(u, \mu)$, whenever $v\leq u \in \mathscr B_w$;
  \item[\rm\it(ii)] (translation invariance) $\nu(v + u, \mu) = \nu(v, \mu) + u$, $\forall u \in \mathbb R, v \in  \mathscr B_w$; 
  \item[\rm\it(iii)] (centralization) $\nu(0, \mu) = 0$.
 \end{description}
 A mapping $\nu: \mathscr B_w \rightarrow \bar{\mathbb R}$ is said to be a \emph{risk measure with respect to (w.r.t.) $\mu \in \mathscr P$} if there exists a risk measure $\tilde \nu$ such that $\nu(v) = \tilde \nu(v, \mu)$, $\forall v \in \mathscr B_w$. 
 Furthermore, $\nu$ is said to be real-valued if $ \lvert \nu(v) \rvert < \infty$, $\forall v \in \mathscr B_w$.
\end{definition}

\begin{remark}\rm 
 Comparing with the definition of risk measures in \cite{artzner1999coherent,follmer2002convex}, we make the following extensions: 
 \begin{enumerate}
  \item We consider here the weighted space $\mathscr B_w$, adapted to a given Lyapunov function $w$,  rather than the space of bounded random variables, $L^\infty(\mu)$, since $\mathscr B_w$ is more suitable for investigating the stability properties of the underlying Markov process (see, e.g., \cite{meyn1993markov, hairer2011yet, Shen2013}) and is also more general than $L^\infty(\mu)$. We will show in later sections how to specify $w$, depending on the form of risk measures and the properties of the underlying Markov process as well.
  \item The objective probability measure $\mu$ is allowed to vary, since in the framework of MCPs, $Q_{x,a}(\cdot) := Q(\cdot|x,a)$ is a probability measure depending on different state-action pairs, though the transition kernel $Q$ itself is assumed to be fixed and known a priori.
  \item We dropped the assumption of coherency or convexity (see Definition \ref{def:convex} below for a formal definition), which is necessary for inducing risk-averse behavior. However, studies in behavioral economics (see e.g., prospect theory by \citet{tversky1992advances}) show that human agents are not always risk-averse. Hence, we drop this assumption and allow more general types of risk measures.
 \end{enumerate}
\end{remark}


To apply risk measures to MCPs, we follow the approach pioneered by \citet{ruszczynski2010risk} with, however, more general types of risk measures. 
We first introduce the concept of risk maps. A similar concept has also been introduced by \citet{Cavus2014} for coherency maps, and by \citet{Shen2013} for general risk maps. 
\begin{definition}
 Let $\{\mathsf X, \mathsf A, \{\mathsf A(x) | x \in \mathsf X \}, Q, c\}$ be an MCP and $\mathsf K = \{(x,a) | x \in \mathsf X, a \in \mathsf A(x)\}$ be the set of all feasible state-action pairs. A mapping $\mathcal R(v|x,a) : \mathscr B_w \times \mathsf K \rightarrow \mathbb R$ is said to be a \emph{risk map} on the MCP if 
\begin{description}
 \item[\it (i)] for each $(x,a) \in \mathsf K$, $\mathcal R(\cdot|x,a): \mathscr B_w \rightarrow \mathbb R$ is a real-valued risk measure w.r.t.\ $Q_{x,a}$, i.e., there exists a real-valued risk measure $\nu$ such that $\mathcal R_{x,a}(\cdot) = \nu(\cdot, Q_{x,a})$;
\item[\it(ii)] for each $v \in \mathscr B_w$, $\mathcal R(v|\cdot)$ is a real-valued $\mathcal B(\mathsf K)$-measurable function. 
\end{description}
 Furthermore, we define for any $\pi \in \Delta$, $\mathcal R^\pi(v|x) := \int_{\mathsf A(x)} \pi(\emph{d} a | x)\mathcal R(v|x,a).$
\end{definition}

For convenience, we write interchangeably $\mathcal R_{x,a}(v) := \mathcal R(v|x,a)$ and $\mathcal R_{x}^\pi(v) := \mathcal R^\pi(v|x)$. $\mathcal R^\pi(v)$ can therefore be viewed as a $\mathcal B(\mathsf X)$-measurable function for each $v \in \mathscr B_w$. It is worth to mention that $\mathcal R$ in fact depends on the transition kernel $Q$ of the underlying MCP, though for brevity we omit it in $\mathcal R$, because in the framework of MCPs, $Q$ is assumed to be fixed and known a priori.


\subsubsection*{Risk preference} We follow the rule that \emph{diversification} should be preferred if the agent is risk-averse (see e.g., \cite[Chapter 4]{follmer2004stochastic}). More specifically, suppose that an agent faces two choices under the same probability measure $\mu$, one of which has a future cost function $v$ while the other $v'$. If the agent diversifies, i.e., spends a fraction $\alpha$ of the resources on the first and the remaining amount on the other choice, the future cost is given by $\alpha v + (1-\alpha) v'$. If the applied risk measure is convex,
$\rho(\alpha v+ (1-\alpha)v', \mu) \leq  \alpha \rho(v, \mu) + (1-\alpha)\rho(v', \mu), \forall \alpha \in [0,1], $
then the diversification should reduce the risk. Thus, the agent's behavior is expected to be \emph{risk-averse}. Conversely, if the applied valuation function is \emph{concave}, the induced risk-preference should be \emph{risk-seeking}. This categorization can be extended to risk maps as follows.
\begin{definition}\label{def:convex}
 A risk map $\mathcal R$ on an MCP is said to be 
 \begin{itemize}
  \item \emph{convex}, if  $$\mathcal R_{x,a}(\alpha v+ (1-\alpha)v') \leq \alpha \mathcal R_{x,a}(v) + (1-\alpha)\mathcal R_{x,a}(v'), \forall \alpha \in [0,1], (x,a) \in \mathsf K;$$
  \item \emph{homogeneous}, if $\mathcal R_{x,a}(\lambda v ) = \lambda \mathcal R_{x,a}(v), \forall \lambda \in \mathbb R_+, (x,a) \in \mathsf K;$
  \item \emph{coherent}, if it is convex and homogeneous.
 \end{itemize}
\end{definition}
Among others, it is easy to verify that coherent risk maps are \emph{subadditive}:
\begin{align}
 \mathcal R_{x,a}(v + v') \leq \mathcal R_{x,a}(v) + \mathcal R_{x,a}(v'), \forall (x,a) \in \mathsf K, v, v' \in \mathscr B_w. \label{eq:subadditive}
\end{align}

\subsubsection*{Examples}

We list below several important examples that have been widely used in the literature. First of all, it is easy to verify that the conditional expectation
\begin{align}
 \mathcal R_{x,a}(v) = Q_{x,a}[v] := \int_{\mathsf X} Q(\diff y|x,a) v(y) \label{eq:mcps}
\end{align}
is a coherent risk map that is linear to $v$. In this case, we call the MCP \emph{risk-neutral}. 

\begin{example}\rm\label{ex:entropic}
\citet{follmer2002convex} introduced the \emph{entropic map}:
\begin{align}
 \mathcal R_{x,a}(v) := \dfrac{1}{\lambda}\ln \left( Q_{x,a} [e^{\lambda v}] \right) = \dfrac{1}{\lambda} \ln \left\lbrace \int_{\mathsf X} Q(\diff y|x,a) e^{ \lambda v(y)} \right\rbrace \label{eq:entropic}
\end{align}
where the risk-sensitive parameter $\lambda \in \mathbb R$ controls the risk-preference of $\mathcal R$: if $\lambda > 0$, $\mathcal R$ is everywhere convex and therefore everywhere risk-averse; if $\lambda < 0$, $\mathcal R$ is everywhere concave and therefore everywhere risk-seeking. It has been studied intensely also in the field of optimal control \cite{chung1987discounted,
  avila1998controlled, borkar2002risk, di2008infinite,coraluppi2000mixed,fleming1997risk,hernandez1996risk,
  marcus1997risk, jaskiewicz2007, cavazos2010optimality}. 
\end{example}

\begin{example}\label{ex:robust}
 \rm \citet{iyengar2005robust} introduced the framework of \emph{robust dynamic programming} (see also \cite{nilim2005robust}), by which he argues that in some applications the transition model $Q$ cannot be inferred exactly. Instead, he employs a set of transition probabilities, $\mathcal P$, which contains all possible ``ambiguous'' transition kernels. In order to gain the ``robustness'', the worst cost is considered, adapted to our framework, 
\begin{align}
\mathcal R_{x,a}(v) := \sup_{P(\cdot|x,a)\in \mathcal P_{x,a}} P_{x,a}[v] = \sup_{P(\cdot|x,a) \in \mathcal P_{x,a}} \int_{y \in \mathsf X} P(\diff y|x,a) v(y). \label{eq:rob}
\end{align}

One example of such $\mathcal P_{x,a}$ is given by \citet[Example 4.3]{ruszczynski2006optimization}:
\begin{align}
 \mathcal P_{x,a} := \left\{ \mu \in \mathscr P \middle| \mu \ll Q_{x,a},  0 \leq \gamma_1 \leq \frac{d \mu}{d Q_{x,a}} \leq \gamma_2 < \infty  \right\}.\label{eq:robust}
\end{align}
Here, ``$\ll$'' denotes the absolute continuity and $Q$ denotes the true transition kernel of the underlying MCP, while $\gamma_1$ and $\gamma_2$ control the degree of variation between the estimated transition kernel and its true model. In particular, if $\gamma_1 = 0$, it becomes the \emph{average value at risk} (see \cite{rockafellar2000Optconval,schied2009robust} and references therein). It is also notable that each concave and homogeneous valuation function has one dual representation of the form \eqref{eq:rob} under some regularity conditions for the set $\mathcal P$ (see e.g.\ \cite{Delbaen_2000} for essentially bounded spaces and \cite{svindland2009subgradients} for unbounded ones).
\end{example}

\begin{example}\label{ex:meanvar}
 \rm \citet{ogryczak1999stochastic} considered the trade-off between the one-step conditional mean and semideviation rather than the deviation of the whole Markov chain \cite{sobel1982variance, filar1989variance}:
\begin{align}
 \mathcal R_{x,a}(v) := Q_{x,a}[v] + \lambda \left( Q_{x,a} \left[ \left( v - Q_{x,a}[v] \right)^r_+\right] \right)^{1/r} \label{eq:meansemi}
\end{align}
where $r \geq 1$ and $\lambda \in [-1, 1]$ denotes the risk-preference parameter which controls the risk preference of $\mathcal R$: if $\lambda >0$, $\mathcal R$ is risk-averse; if $\lambda < 0$, $\mathcal R$ is risk-seeking. Here, $(x)_+ := \max(x,0)$. Setting $r=2$, this map can be viewed as an approximation of the mean-variance tradeoff scheme defined in \cite{filar1989variance}.
\end{example}

\begin{example}
 \rm The entropic map introduced above belongs in fact to a large family of risk maps called \emph{utility-based shortfall} (see \cite[Section 4.6]{follmer2004stochastic} and \cite[Section 2]{schied2009robust}) defined as follows. Let $u :\mathbb R \rightarrow \mathbb R$ be a continuous, increasing and non-constant utility function satisfying $u(0) = 0$. Assume that there exists a constant $m \in \mathbb R$ such that $\int_{\mathsf X} u(v(y) - m) Q_{x,a}(\diff y) < \infty$ for each $(x,a) \in \mathsf K$. Then, the utility-based shortfall is defined as 
\begin{align}
 \mathcal R_{x,a}(v) := \sup \left\{ m \in \mathbb R \mid \int_{\mathsf X} u(v(y) - m ) Q_{x,a}(\diff y) \geq 0 \right\}. \label{eq:shortfall}
\end{align}
It is remarkable that in principle, one can apply different $u$ for different state-action pair $(s,a)$. Here, for brevity, we apply the same $u$ for all $(s,a) \in \mathsf K$. 
\end{example}

\section{Average risk-sensitive MCPs}\label{sec:obj}
We define $\mathcal T: \mathscr B_w \times \mathsf K \rightarrow \mathbb R$ as
\begin{align*}
 \mathcal T(v|x,a) := c(x,a) + \mathcal R(v|x,a), (x,a) \in \mathsf K,
\end{align*}
and for each single-step policy $\pi \in \Delta$ define
\begin{align}
 \mathcal T^\pi(v|x) := \int \pi(\diff a |x) \mathcal T(v|x,a) = c^\pi(x) + \mathcal R^\pi(v|x) \label{eq:Tpi}
\end{align}
which is $\mathcal B(\mathsf X)$-measurable for each $v \in \mathscr B_w$. For convenience, we interchangeably write $\mathcal T_{x,a}(v) := \mathcal T(v|x,a)$ and $\mathcal T^\pi_x(v) := \mathcal T^\pi(v|x)$. It is worth to mention that for any $\pi \in \Delta$, monotonicity and translation invariance (but not centralization) (see Definition \ref{def:risk:measure}) of $\mathcal R$ imply 
the same properties for $\mathcal T^\pi_x(\cdot)$.


\subsection{Average criterion}\label{sec:ave:criterion}
Before we construct the risk-sensitive objective using the backward induction as in \cite{ruszczynski2010risk} and \cite{Shen2013}, we first state conditions, under which $\mathcal T^\pi(v|\cdot) \in \mathscr B_w$ for fixed $v \in \mathscr B_w$ and any $\pi \in \Delta$. For risk-neutral MCPs, where $\mathcal R(\cdot) = Q[\cdot]$, this is usually guaranteed by assuming $\lvert c(x,a) \rvert \leq C w(x)$ and $Q_{x,a}[w] \leq C w(x)$, for all $(x,a) \in \mathsf K$ with some positive constant $C$ (see e.g., \cite[Assumption 10.2.1(d) and (f)]{hernandez1999further}). For general risk maps which are not homogeneous, the above assumption is not sufficient. Instead, we consider the following assumption throughout this paper:
\begin{description}
 \item[\rm\lya{1}]  There exist a $\mathcal B(\mathsf X)$-measurable function $w_0: \mathsf X \rightarrow [0,\infty)$, constants $\gamma_0 \in (0,1)$ and $K_0 > 0$ such that $$\mathcal T_{x,a}(w_0) \vee \left( -\mathcal T_{x,a}(-w_0) \right) \leq \gamma_0 w_0(x) + K_0, \forall (x,a) \in \mathsf K.$$ 
\end{description}
Here, $a \vee b := \max(a,b)$. From now on, let $w := 1 + K^{-1} w_0$ with some positive $K \in \mathbb R_+$ whose value will be specified in Section \ref{sec:bounded}.

\begin{proposition}\label{prop:forward:weight}
Assume that \lya{1} holds and $v \in \mathscr B_{w}$ satisfies $\lvert v(x) \rvert \leq w_0(x)  + A, \forall x \in \mathsf X$, with some positive $A$. Then $\lvert \mathcal T_{x}^\pi(v) \rvert \leq w_0(x)  + A + K_0, \forall x \in \mathsf X$, $\pi \in \Delta$.
\end{proposition}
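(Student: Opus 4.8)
The plan is to establish the bound first for the state--action risk $\mathcal T_{x,a}$ and then integrate against the policy, since \lya{1} is phrased in terms of $\mathcal T_{x,a}$. The whole argument rests on just two structural properties, monotonicity and translation invariance, which---as noted immediately before the statement---are inherited by $\mathcal T_{x,a}(\cdot)$ from the underlying risk measure $\mathcal R_{x,a}(\cdot)$. First I would rewrite the hypothesis $\lvert v(x)\rvert \leq w_0(x)+A$ as the two-sided pointwise bound $-w_0 - A \leq v \leq w_0 + A$ in the partial order on $\mathscr B_w$. Applying monotonicity of $\mathcal T_{x,a}$ to the upper inequality and then translation invariance to extract the additive constant gives $\mathcal T_{x,a}(v) \leq \mathcal T_{x,a}(w_0) + A$; the same two properties applied to the lower inequality give $\mathcal T_{x,a}(v) \geq \mathcal T_{x,a}(-w_0) - A$.

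Next I would feed in assumption \lya{1}, using \emph{both} halves of the maximum. The upper half yields $\mathcal T_{x,a}(w_0) \leq \gamma_0 w_0(x) + K_0$, while the lower half, $-\mathcal T_{x,a}(-w_0) \leq \gamma_0 w_0(x)+K_0$, rearranges to $\mathcal T_{x,a}(-w_0) \geq -\gamma_0 w_0(x) - K_0$. Combined with the previous step, this sandwiches $\mathcal T_{x,a}(v)$ between $\pm\bigl(\gamma_0 w_0(x) + K_0 + A\bigr)$, i.e.\ $\lvert \mathcal T_{x,a}(v)\rvert \leq \gamma_0 w_0(x) + A + K_0$. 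Since $\gamma_0 \in (0,1)$ and $w_0 \geq 0$ one has $\gamma_0 w_0(x) \leq w_0(x)$, which upgrades this to $\lvert \mathcal T_{x,a}(v)\rvert \leq w_0(x) + A + K_0$ for all $(x,a)\in\mathsf K$. The one point worth flagging---rather than a genuine obstacle, since the argument is elementary---is that the coefficient in front of $w_0$ relaxes from the contractive $\gamma_0$ to $1$ precisely because $\gamma_0<1$; it is this slack that makes the estimate uniform in $(x,a)$ and, later, stable under iteration.

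Finally, to pass from $\mathcal T_{x,a}$ to $\mathcal T^\pi_x$, I would integrate the bound just obtained against the probability kernel $\pi(\cdot\mid x)$. Because the right-hand side $w_0(x)+A+K_0$ does not depend on $a$ and $\pi(\cdot\mid x)$ is a probability measure on $\mathsf A(x)$, the triangle inequality for integrals gives
\[
 \lvert \mathcal T^\pi_x(v)\rvert = \left\lvert \int \pi(\diff a\mid x)\, \mathcal T_{x,a}(v) \right\rvert \leq \int \pi(\diff a\mid x)\, \lvert \mathcal T_{x,a}(v)\rvert \leq w_0(x) + A + K_0 .
\]
As $x\in\mathsf X$ and $\pi\in\Delta$ were arbitrary, this is exactly the assertion of the proposition.
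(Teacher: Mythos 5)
Your proposal is correct and follows essentially the same argument as the paper: rewrite the hypothesis as the two-sided bound $-w_0-A\le v\le w_0+A$, use monotonicity and translation invariance of $\mathcal T_{x,a}$ together with both halves of \lya{1} to sandwich $\mathcal T_{x,a}(v)$, and relax $\gamma_0 w_0$ to $w_0$. The only difference is that you make explicit the final integration against $\pi(\cdot\mid x)$, a step the paper leaves implicit in ``by which the assertion follows.''
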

\begin{proof}
 By assumption, $-w_0 - A \leq v \leq w_0 + A$. \lya{1} implies that $$\mathcal T_{x,a}(v) \leq \mathcal T_{x,a}(w_0) +A \leq \gamma_0 w_0(x) + K_0 + A \leq w_0(x) + K_0 + A.$$
 Similarly, we obtain $\mathcal T_{x,a}(v) \geq - w_0(x) - K_0 - A$, by which the assertion follows. \quad
\end{proof}

Let $\bs \pi = [\pi_0, \pi_1, \ldots]$ be a Markov policy. The above proposition shows that starting from any $v \in \mathscr B_{w} $ satisfying $\lvert v \rvert \leq w_0  + A$, the following backward iteration
\begin{align*}
 v^{\bs \pi, T, T} := \mathcal T^{\pi_T}(v), v^{\bs \pi, t,T} := \mathcal T^{\pi_t} \left( v^{\bs \pi, t+1, T} \right), t = T-1, T-2, \ldots, 0
\end{align*}
is well-defined, since $v^{\bs \pi, t,T} \in \mathscr B_{w}$ for each $t = 0, 1, \ldots, T$. 

We now apply risk maps to construct risk-sensitive objectives as in \cite{ruszczynski2010risk, Shen2013}. Replacing the conditional expectation in \eqref{eq:decompose} with a risk map $\mathcal R$, we obtain for $\boldsymbol \pi \in \Pi_M$ and $x \in \mathsf{X}$,
\begin{align}
J_T(x,\boldsymbol \pi) := & c^{\pi_0}(x) + \mathcal R^{\pi_0}_{x} \left( c^{\pi_1} + \mathcal R^{\pi_1} \left(c^{\pi_2} + \ldots + \mathcal R^{\pi_{T-1}}(c^{\pi_T}) \ldots \right) \right) \label{eq:T}\\
       = & \mathcal T^{\pi_0}_x \left( \mathcal T^{\pi_1}(\cdots \mathcal T^{\pi_T}(0) \cdots) \right),  \nonumber
\end{align}
and the risk-sensitive objective considered in this paper is the \emph{average risk}:
\begin{align}
 J(x, \boldsymbol \pi) := &\limsup_{T \rightarrow \infty} \dfrac{1}{T}J_T(x, \boldsymbol \pi). \label{eq:avrk}
\end{align}

\begin{remark}\rm
 Applying the same recursive approach as above, other two widely used objectives in literature, the finite-stage total cost and the discounted cost, can be analogously extended to risk-sensitive objectives, the \emph{finite-stage total risk} and the \emph{discounted risk}, respectively (see \cite{ruszczynski2010risk} for coherent and \cite{Shen2013} for general risk maps). 
\end{remark}

\begin{remark}\rm
It is remarkable that the entropic map introduced in Exampe \ref{ex:entropic} is \emph{time-consistent} \cite{Kupper2009}. To see this in the framework of MCPs, note that for the finite-stage sum,
  \begin{align*}
    \nu(S_T|x) = \dfrac{1}{\lambda}\ln \left( \mathbb E^{\bs \pi} [e^{\lambda \sum_{t=0}^T c(X_t, A_t)} | X_0 = x] \right) = \dfrac{1}{\lambda}\ln \left( \mathbb E^{\bs \pi} [\prod_{t=0}^T e^{\lambda c(X_t, A_t)} | X_0 = x] \right).
  \end{align*}
  Suppose now that the policy is deterministic $\bs \pi = [f_0, f_1, \ldots, f_T]$. By the Markov property, we have then
  \begin{align*}
   \nu(S_T|x) = & \dfrac{1}{\lambda}\ln \left( e^{\lambda c^{f_0}(x)} P_x^{f_0}\left[  \cdots \left[ e^{\lambda c^{f_T}}P^{f_{T-2}}\left[e^{\lambda c^{f_T}} P^{f_{T-1}}[e^{\lambda c^{f_T}}] \right] \right] \cdots \right] \right) \\
    = & c^{f_0}(x) + \mathcal R_{x}^{f_0} \left( \cdots \left( c^{f_{T-1}} + \mathcal R^{f_{T-1}} \left( c^{f_{T}} \right) \right) \cdots \right).
  \end{align*}
However, this equivalence between the risk measure and its time-consistent version applied in this paper is an exception rather than the rule (for details see e.g., \cite{roorda2007time}). It does not hold for the other risk maps introduced in the last section. This also explains why we apply a recursive approach as in \eqref{eq:T} to enforce the time consistency instead of applying directly risk measures to the sum. 
\end{remark}

\subsubsection*{Remarks on Assumption \lya{1}} 
The Lyapunov-type condition \lya{1} implies a growth condition with the nonnegative weight function $w_0$. Sufficient conditions for \lya{1} in terms of the risk map $\mathcal R$ and the cost function $c$ separately are given as follows:
\begin{description}
\item[\rm\lya{1a}] $w_0: \mathsf X \rightarrow [0,\infty)$ is a $\mathcal B(\mathsf X)$-measurable function satisfying $$\mathcal R_{x,a}(w_0) \vee \left( - \mathcal R_{x,a}( - w_0) \right) \leq \hat \gamma_0 w_0(x) + \hat K_0, \forall (x,a) \in \mathsf K, \textrm{  and}$$
 \item[\rm\lya{1b}] $\lvert c(x,a) \rvert \leq C_0 (w_0^p(x) +1), \forall (x,a) \in \mathsf K, $ with some positive constants $p \in (0,1)$ and $C_0 > 0.$
 \end{description}
If, in addition, the risk map is convex, inducing risk-averse behavior, we have that $-\mathcal R_{x,a}(-w_0) \leq \mathcal R_{x,a}(w_0)$ holds for all $(x,a) \in \mathsf K$. Hence, \lya{1a} reduces to 
\begin{align*}
 \mathcal R_{x,a}(w_0) \leq \hat \gamma_0 w_0(x) + \hat K_0.
\end{align*}
Note that for any $\gamma \in (0, 1 - \hat \gamma_0)$, there exists a positive constant $\tilde K_0$ such that $C_0 (w_0^p + 1) \leq \gamma w_0 + K$. Hence, \lya{1b} yields $\lvert c(x,a) \rvert \leq \gamma w_0 + K$, which, together with \lya{1a} implies \lya{1}. 

Furthermore, we show below that for coherent risk maps, Assumption \lya{1} is in fact compatible with the conventional assumptions applied in the literature of MCPs (see e.g., \cite[Assumption 10.2.1(d) and (f)]{hernandez1999further} for risk-neutral MCPs).

\begin{assumption}\label{assp:weight}
 \rm There exist a $\mathcal B(\mathsf X)$-measurable function $\tilde w_0: \mathsf X \rightarrow [0,\infty)$, positive constants $C$, $\tilde \gamma_0 \in (0,1)$ and $\tilde K_0$ such that 
 \begin{description}
 \item[\rm\lya{1a'}] $ \mathcal R_{x,a}(\tilde w_0) \leq \tilde \gamma_0 \tilde w_0(x) + \tilde K_0, \forall (x,a) \in \mathsf K$, and
  \item[\rm\lya{1b'}] $\lvert c(x,a) \rvert \leq C (\tilde w_0(x) + 1), \forall (x,a) \in \mathsf K$.
 \end{description}
\end{assumption}
\begin{proposition} \label{prop:coherence:bound}
 If $\mathcal R$ is a coherent risk map, Assumption \ref{assp:weight} implies \lya{1} with  $w_0 = \frac{C}{\gamma_0 - \tilde \gamma_0} \tilde w_0$ and $K_0 = C + \frac{C \tilde K_0}{\gamma_0 - \tilde \gamma_0}$, for any $\gamma_0 \in (\tilde \gamma_0, 1)$. 
\end{proposition}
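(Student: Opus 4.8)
The plan is to verify Assumption \lya{1} directly for the given choice $w_0=\kappa\tilde w_0$, where I write $\kappa:=\frac{C}{\gamma_0-\tilde\gamma_0}$. Note first that the stipulation $\gamma_0\in(\tilde\gamma_0,1)$ together with $C>0$ guarantees $\kappa>0$, which is what licenses the use of positive homogeneity below. Since $\mathcal T_{x,a}(w_0)\vee(-\mathcal T_{x,a}(-w_0))$ is a maximum of two quantities, I would bound each of the two separately by $\gamma_0 w_0(x)+K_0$; the assertion then follows by taking the maximum. Throughout I would exploit that $\mathcal T_{x,a}(v)=c(x,a)+\mathcal R_{x,a}(v)$, that $\mathcal R$ is coherent (hence homogeneous, and subadditive by \eqref{eq:subadditive}) and centralized, and that the cost satisfies \lya{1b'}.

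For the first term, homogeneity applied to \lya{1a'} gives $\mathcal R_{x,a}(w_0)=\kappa\,\mathcal R_{x,a}(\tilde w_0)\le\kappa(\tilde\gamma_0\tilde w_0(x)+\tilde K_0)=\tilde\gamma_0 w_0(x)+\kappa\tilde K_0$. For the cost I would rewrite the bound \lya{1b'} in terms of $w_0$ using the identity $C\tilde w_0=\frac{C}{\kappa}w_0=(\gamma_0-\tilde\gamma_0)w_0$, so that $\lvert c(x,a)\rvert\le(\gamma_0-\tilde\gamma_0)w_0(x)+C$. Adding these two estimates yields $\mathcal T_{x,a}(w_0)=c(x,a)+\mathcal R_{x,a}(w_0)\le\gamma_0 w_0(x)+C+\kappa\tilde K_0$, and recognizing $C+\kappa\tilde K_0=C+\frac{C\tilde K_0}{\gamma_0-\tilde\gamma_0}=K_0$ closes the first half.

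The second term is where coherence, rather than mere homogeneity, is needed, and this is the one genuinely delicate point. Since $-\mathcal T_{x,a}(-w_0)=-c(x,a)-\mathcal R_{x,a}(-w_0)$, I must control $-\mathcal R_{x,a}(-w_0)$, but \lya{1a'} only furnishes an upper bound on $\mathcal R_{x,a}(\tilde w_0)$, not on $\mathcal R_{x,a}(-\tilde w_0)$. The resolution is the inequality $-\mathcal R_{x,a}(-w_0)\le\mathcal R_{x,a}(w_0)$, which follows from subadditivity \eqref{eq:subadditive} applied to $w_0$ and $-w_0$ together with centralization $\mathcal R_{x,a}(0)=0$ — precisely the inequality already invoked in the convex-case remark preceding Assumption \ref{assp:weight}. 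With this in hand, $-\mathcal R_{x,a}(-w_0)\le\mathcal R_{x,a}(w_0)\le\tilde\gamma_0 w_0(x)+\kappa\tilde K_0$, and combining with $-c(x,a)\le\lvert c(x,a)\rvert\le(\gamma_0-\tilde\gamma_0)w_0(x)+C$ gives the same bound $\gamma_0 w_0(x)+K_0$ as before. Taking the maximum of the two halves establishes \lya{1}. The only nonroutine ingredient, and the step I would flag as the crux, is the passage $-\mathcal R_{x,a}(-w_0)\le\mathcal R_{x,a}(w_0)$, since it is exactly where homogeneity alone would fail and coherence is essential.
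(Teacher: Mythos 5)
Your proof is correct and follows essentially the same route as the paper's: bound $\mathcal T_{x,a}(w_0)$ via \lya{1b'}, positive homogeneity, and \lya{1a'}, then handle $-\mathcal T_{x,a}(-w_0)$ through the coherence inequality $-\mathcal R_{x,a}(-w_0)\le\mathcal R_{x,a}(w_0)$. The only difference is presentational: the paper compresses the second half into a ``similarly,'' whereas you correctly make explicit that subadditivity \eqref{eq:subadditive} plus centralization is exactly what that step requires.
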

\begin{proof}
 Fix $\gamma_0 \in (\tilde \gamma_0, 1)$. \lya{1b'} and \lya{1a'} imply for each $(x,a) \in \mathsf K$, 
 \begin{align*}
  \mathcal T_{x,a}(w_0) = & c(x,a) + \mathcal R_{x,a}(w_0) \\
         \leq & C (1+\tilde w_0(x)) + \mathcal R_{x,a} (\frac{C}{\gamma_0 - \tilde \gamma_0} \tilde w_0) = C (1+\tilde w_0(x)) + \frac{C}{\gamma_0 - \tilde \gamma_0}  \mathcal R_{x,a}(\tilde w_0) \\
                        \leq & C ( 1 + \frac{\tilde \gamma_0}{\gamma_0 - \tilde \gamma_0})\tilde w_0(x) + K_0 = \gamma_0 w_0(x) + K_0.
 \end{align*}
Similarly, we obtain $-\mathcal T_{x,a}(- w_0) \leq \gamma_0 w_0(x) + K_0$ which implies \lya{1}. \quad
\end{proof}


\subsection{Bounded forward invariant subset}\label{sec:bounded}
Let $\boldsymbol \pi = [\pi_0, \pi_1, \ldots]$ be a Markov policy and define the following iteration
\begin{align*}
 \mathcal T^{(\bs \pi,n)}(v) := \mathcal T^{\pi_0} \left( \mathcal T^{\pi_1}(\cdots \mathcal T^{\pi_n}(v) \cdots) \right), n = 0, 1, \ldots.
\end{align*}
Hence, by definition, the $n$-stage risk-sensitive objective $J_n(x, \bs \pi) = \mathcal T^{(\bs \pi,n)}_x(0)$. 

Suppose Assumption \lya{1} holds and $v \in \mathscr B_w$ satisfies $\lvert  v\rvert \leq w_0 + A$ with some $A > 0$, then Proposition \ref{prop:forward:weight} shows that $\lvert \mathcal T^{(\bs \pi,n)}(v) \rvert \leq w_0 + K(n)$ with some positive $K(n) > 0$. Hence, $\mathcal T^{(\bs \pi,n)}(v) \in \mathscr B_w, \forall n\in \mathbb N$. Note that, however, $\{ K(n) \}$ may be an increasing sequence such that $K(n) \rightarrow \infty$ as $n \rightarrow \infty$. This implies that the sequence $\{ \mathcal T^{(\bs \pi,n)}(v), n=1, 2, \ldots\}$ can be unbounded w.r.t.\ $w$-norm. We will specify below a sufficient condition, similar to the ergodicity condition for risk-neutral MCPs (see Assumption \ref{assp:standard}), which implies boundedness of the sequence $\{ \mathcal T^{(\bs \pi,n)}(v)\}$ w.r.t.\ the $w$-seminorm.


\begin{assumption}\label{asmp:subset}\rm
Suppose that \lya{1} holds with some $\mathcal B(\mathsf X)$-measurable function $w_0: \mathsf X \rightarrow [0,\infty)$, constants $\gamma_0 \in (0,1)$ and $K_0 > 0$. In addition, assume
 \begin{description}
  \item[\rm\lya{2}] there exists a constant $K > K_0$ such that the inequality
\begin{align*}
 \mathcal R_{x,a}(w_0 + K) - \mathcal R_{x,a}(v) + \mathcal R_{y,b}(v)  -  \mathcal R_{y,b}(-w_0-K) \geq 2 K_0 
\end{align*}
holds for all $v\in \mathscr B_{1+w_0}$ satisfying $\lvert v\rvert \leq w_0 + K$, and $x, y \in \mathsf B_0 := \{x \in \mathsf X| w_0(x) \leq \frac{2K_0}{1-\gamma_0}\}$, $a \in \mathsf A(x)$, $b \in \mathsf A(y)$.
 \end{description}
\end{assumption}
We first state the main result, followed by several remarks on \lya{2}. 
\begin{theorem}\label{th:inv_bound}
 Suppose Assumption \ref{asmp:subset} holds. Then for each $\pi \in \Delta$, 
 \begin{align*}
  \lVert \mathcal T^\pi(v) \rVert_{s, 1+ K^{-1} w_0} \leq K, \textrm{whenever } \lVert v \rVert_{s, 1+ K^{-1} w_0} \leq K.
 \end{align*}
\end{theorem}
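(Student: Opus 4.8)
The plan is to peel the seminorm estimate down to a pointwise two-point inequality and then to treat two regimes separately: pairs of points where the Lyapunov drift \lya{1} already provides enough contraction, and pairs lying in the small set $\mathsf B_0$, where the Doeblin-type condition \lya{2} must supply the missing amount. First I would normalize. By Lemma \ref{lm:semi}, $\lVert v\rVert_{s,w}\le K$ with $w=1+K^{-1}w_0$ means $\min_{c}\lVert v+c\rVert_w\le K$, so after replacing $v$ by $v+c_0$ for a suitable constant $c_0$ — which changes neither $\lVert v\rVert_{s,w}$ nor, by translation invariance of $\mathcal T^\pi$, the quantity $\lVert\mathcal T^\pi(v)\rVert_{s,w}$ (adding a constant leaves a seminorm unchanged) — I may assume $\lvert v\rvert\le w_0+K$, so that in particular $v\in\mathscr B_{1+w_0}$ and \lya{2} is applicable. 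Unwinding the seminorm of $\mathcal T^\pi(v)$ against $w$, the claim becomes the two-point bound
\[
 \lvert \mathcal T^\pi_x(v)-\mathcal T^\pi_y(v)\rvert \le w_0(x)+w_0(y)+2K,\qquad \forall\, x\neq y,
\]
and by symmetry in $x,y$ it suffices to bound $\mathcal T^\pi_x(v)-\mathcal T^\pi_y(v)$ from above.

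Next I would establish the pointwise estimate $\mathcal T_{x,a}(v)-\mathcal T_{y,b}(v)\le w_0(x)+w_0(y)+2K$ for all $a\in\mathsf A(x),\ b\in\mathsf A(y)$, splitting on whether $w_0(x)+w_0(y)\ge\frac{2K_0}{1-\gamma_0}$. In the large regime, monotonicity of $\mathcal R$ gives $\mathcal R_{x,a}(v)\le\mathcal R_{x,a}(w_0+K)$ and $\mathcal R_{y,b}(v)\ge\mathcal R_{y,b}(-w_0-K)$; adding the costs and applying translation invariance together with \lya{1} (in both of its one-sided forms, on $\mathcal T(w_0)$ and on $-\mathcal T(-w_0)$) yields $\mathcal T_{x,a}(v)-\mathcal T_{y,b}(v)\le\gamma_0\big(w_0(x)+w_0(y)\big)+2K_0+2K$, and the defining inequality $(1-\gamma_0)\big(w_0(x)+w_0(y)\big)\ge 2K_0$ of this regime absorbs the $2K_0$.

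In the complementary regime both $x,y\in\mathsf B_0$, so \lya{2} is available. The key algebraic step is the decomposition
\[
 \mathcal R_{x,a}(v)-\mathcal R_{y,b}(v)=\mathcal R_{x,a}(w_0+K)-\mathcal R_{y,b}(-w_0-K)-\big[\mathcal R_{x,a}(w_0+K)-\mathcal R_{x,a}(v)+\mathcal R_{y,b}(v)-\mathcal R_{y,b}(-w_0-K)\big],
\]
in which the bracket is exactly the left-hand side of \lya{2} and hence $\ge 2K_0$. Adding $c(x,a)-c(y,b)$ and using translation invariance turns the leading terms into $\mathcal T_{x,a}(w_0)-\mathcal T_{y,b}(-w_0)+2K-2K_0$, and \lya{1} bounds this by $\gamma_0\big(w_0(x)+w_0(y)\big)+2K\le w_0(x)+w_0(y)+2K$.

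Finally I would lift the pointwise bound to $\mathcal T^\pi$ by integrating against the product $\pi(\diff a|x)\,\pi(\diff b|y)$: since each factor is a probability measure, $\mathcal T^\pi_x(v)-\mathcal T^\pi_y(v)=\int\!\!\int\big[\mathcal T_{x,a}(v)-\mathcal T_{y,b}(v)\big]\,\pi(\diff a|x)\,\pi(\diff b|y)$, which inherits the bound $w_0(x)+w_0(y)+2K$; the symmetric estimate then gives the absolute value and closes the two-point inequality. The main obstacle is the small-set regime: there the Lyapunov drift alone leaves a slack of precisely $2K_0$, and the crux is to recognize that condition \lya{2} is engineered so that its left-hand side is exactly the telescoping defect separating the monotonicity bounds $\mathcal R_{x,a}(w_0+K),\,\mathcal R_{y,b}(-w_0-K)$ from the true values $\mathcal R_{x,a}(v),\,\mathcal R_{y,b}(v)$, letting that slack be recovered. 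Keeping the translation-invariance bookkeeping and the two one-sided forms of \lya{1} aligned is where care is needed.
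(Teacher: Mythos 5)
Your proposal is correct and follows essentially the same route as the paper's proof: normalization via Lemma \ref{lm:semi}, reduction to the two-point inequality $\mathcal T_x^\pi(v)-\mathcal T_y^\pi(v)\le 2K+w_0(x)+w_0(y)$, and the identical case split at $w_0(x)+w_0(y)=\frac{2K_0}{1-\gamma_0}$, with monotonicity plus \lya{1} in the large regime and the \lya{2} bracket plus \lya{1} on the small set (your pointwise-then-integrate lifting is an immaterial variant of the paper's direct treatment of $\mathcal T^\pi$). In fact your Case II bookkeeping, which ends with $\mathcal T_{x,a}(w_0)-\mathcal T_{y,b}(-w_0)+2K-2K_0$, is the precise form of the estimate that the paper's displayed derivation intends.
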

\begin{proof}
 Note that adding a constant to $v$ will not change the required inequality. Due to Lemma \ref{lm:semi}, we may assume that $\lvert v \rvert \leq K + w_0$. By the definition of the weighted seminorm, it is sufficient to show that for each $\pi \in \Delta$,
 \begin{align}
  \mathcal T_x^\pi(v) - \mathcal T_y^\pi(v) \leq 2 K + w_0(x) + w_0(y), \forall x \neq y \in \mathsf X.
 \end{align}

We consider the following two cases. Case I: $w_0(x) + w_0(y) \geq \frac{2K_0}{1-\gamma_0}$. This implies
\begin{align}
 2 K + \gamma_0 (w_0(x) + w_0(y)) + 2 K_0 \leq 2 K + w_0(x) + w_0(y). \label{eq:casei}
\end{align}
By the monotonicity of $\mathcal R$ and Assumption \lya{1}, we have
\begin{align*}
 \mathcal T_x^\pi(v) \leq \sup_{a \in \mathsf A(x)} \mathcal T_{x,a}(v) \leq & \sup_{a \in \mathsf A(x)} \mathcal T_{x,a}(K+w_0) = K + \sup_{a \in \mathsf A(x)} \mathcal T_{x,a}(w_0) \\
  \leq & K + \gamma_0 w_0(x) + K_0, \forall x \in \mathsf X, 
\end{align*}
and similarly $\mathcal T_y^\pi(v) \geq -K - \gamma_0 w_0(y) - K_0, \forall y \in \mathsf X.$
Hence, together with \eqref{eq:casei} we obtain
$
 \mathcal T_x^\pi(v) - \mathcal T_y^\pi(v) \leq 2 K + w_0(x) + w_0(y).
$

Case II: $w_0(x) + w_0(y) \leq \frac{2K_0}{1-\gamma_0}$. Then both $x$ and $y$ are in the subset $\mathsf B_0$. Hence, 
\begin{align*}
 \mathcal T_x^\pi(v) - \mathcal T_y^\pi(v) \leq & c^\pi(x) - c^\pi(y) + \mathcal R_{x}^\pi(v) - \mathcal R_{y}^\pi(v)\\
\textrm{(by \lya{2})} \ \leq & 2(K - K_0) + c^\pi(x) - c^\pi(y) + \mathcal R_{x}^\pi(w_0) - \mathcal R_{y}^\pi(w_0) \\
\textrm{(by \lya{1})} \ \leq & 2(K - K_0) + 2 K_0 + \gamma_0 w_0(x) + \gamma_0 w_0(y) \\
\leq & 2 K + w_0(x) + w_0(y).
\end{align*}
Combining I and II, we obtain the required inequality. \quad 
\end{proof}

The above theorem implies immediately the boundedness of iterations $\{ \mathcal T^{(\bs \pi,n)}(v) \}$ under the weighted seminorm with $w = 1+ K^{-1} w_0$. 
\begin{corollary}
 Suppose Assumption \ref{asmp:subset} holds. Then for each $\bs \pi \in \Pi_M$ and $n \in \mathbb N$,  $\lVert \mathcal T^{(\bs \pi,n)}(v) \rVert_{s, 1+ K^{-1} w_0} \leq K, \textrm{whenever } \lVert v \rVert_{s, 1+ K^{-1} w_0} \leq K.$
\end{corollary}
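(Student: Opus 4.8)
The plan is to reduce the statement to the single-step estimate already proved and then iterate it. Writing $w = 1 + K^{-1} w_0$, Theorem \ref{th:inv_bound} establishes precisely that each one-step operator $\mathcal T^\pi$, $\pi \in \Delta$, maps the $w$-seminorm ball $\{\, \lVert \cdot \rVert_{s,w} \leq K \,\}$ into itself. Since by definition $\mathcal T^{(\bs\pi,n)}$ is the $(n{+}1)$-fold composition $\mathcal T^{\pi_0} \circ \mathcal T^{\pi_1} \circ \cdots \circ \mathcal T^{\pi_n}$ of exactly such operators, the corollary should follow by invoking Theorem \ref{th:inv_bound} once for each factor, i.e.\ by a finite induction on the depth of the composition.

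Concretely, I would fix $\bs\pi = [\pi_0, \pi_1, \ldots] \in \Pi_M$ and $v$ with $\lVert v \rVert_{s,w} \leq K$, set $g_{n+1} := v$, and define the inside-out partial iterates $g_k := \mathcal T^{\pi_k}(g_{k+1})$ for $k = n, n-1, \ldots, 0$, so that $g_0 = \mathcal T^{(\bs\pi,n)}(v)$. A descending induction on $k$ then yields $\lVert g_k \rVert_{s,w} \leq K$ for every $k$: the base case $\lVert g_{n+1} \rVert_{s,w} = \lVert v \rVert_{s,w} \leq K$ is the hypothesis, and the inductive step is exactly Theorem \ref{th:inv_bound} applied with $\pi = \pi_k$ to the argument $g_{k+1}$. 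Taking $k = 0$ gives the assertion. The only bookkeeping requiring care is that each $g_{k+1}$ must lie in $\mathscr B_w$ for the theorem to apply; to arrange this I would first observe that adding a constant alters neither the $w$-seminorm (Lemma \ref{lm:semi}) nor, in any relevant way, the iterates, since each $\mathcal T^{\pi_k}$ inherits translation invariance from $\mathcal R$. Hence after a harmless shift I may assume $\lvert v \rvert \leq w_0 + K$, and Proposition \ref{prop:forward:weight} then propagates a bound of the form $\lvert g_k \rvert \leq w_0 + K + (n+1-k)K_0$, so every partial iterate stays in $\mathscr B_w$ and the composition is legitimate.

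I do not anticipate any genuine obstacle here: once Theorem \ref{th:inv_bound} is available the corollary is essentially a one-line induction, and the single delicate point is ensuring the iterates remain in $\mathscr B_w$, which is handled by Proposition \ref{prop:forward:weight} together with the normalization $\lvert v \rvert \leq w_0 + K$. The substantive content of the boundedness of value iterations is carried entirely by Theorem \ref{th:inv_bound}; this corollary merely records that forward invariance of the seminorm ball under each $\mathcal T^\pi$ transfers to the composed maps $\mathcal T^{(\bs\pi,n)}$.
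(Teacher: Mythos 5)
Your proposal is correct and is exactly the argument the paper intends: the paper treats the corollary as an immediate consequence of Theorem \ref{th:inv_bound}, obtained by applying the one-step invariance of the seminorm ball once per factor in the composition $\mathcal T^{\pi_0}\circ\cdots\circ\mathcal T^{\pi_n}$. Your additional bookkeeping (normalizing via Lemma \ref{lm:semi} and translation invariance, then using Proposition \ref{prop:forward:weight} to keep each partial iterate in $\mathscr B_w$) just makes explicit what the paper leaves implicit, and is handled correctly.
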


Hence, given a risk map satisfying \lya{1} and \lya{2}, we can restrict ourselves to the \emph{bounded forward invariant subset} $\mathscr B_w^{(K)}$ (with $w = 1 + K^{-1} w_0 $) defined as
\begin{align*}
 \mathscr B_w^{(K)} := \left\{ v \in \mathscr B_w \mid \lVert v \rVert_{s,w} \leq K \right\},
\end{align*}
rather than the whole set $\mathscr B_w$.

\subsubsection*{Remarks on \lya{2}}
\lya{2} in fact generalizes Doeblin condition stated in Assumption \ref{assp:standard}(ii). To see this, taking the risk-neutral risk map $\mathcal R_{x,a}(\cdot) = Q_{x,a}[\cdot]$, \lya{2} becomes
$
 Q_{x,a}[w_0+K - v] -  Q_{y,b}[-v-w_0-K] \geq 2K_0 
$
due to the linearity of $Q$. Suppose now $Q$ satisfies Assumption \ref{assp:standard}(ii) on $\mathsf B_0$, i.e., there exists a probability measure $\mu$ and some constant $\alpha \in (0,1)$ such that $Q_{x,a}(\mathsf B) \geq \alpha \mu(\mathsf B), \forall \mathsf B \in \mathcal B(\mathsf X)$, $x \in \mathsf B_0$ and $a \in \mathsf A(x)$. Then, it is easy to see that
\begin{align*}
 Q_{x,a}[w_0+K - v] -  Q_{y,b}[-v-w_0-K] \geq 2  \alpha \mu[ w_0+K ] \geq 2 \alpha K.
\end{align*}
Hence, given $K_0$ and $\alpha$, we can easily choose $K$ such that \lya{2} holds. This connection holds also for coherent risk maps, which we will show below. For more general risk maps, however, the connection is not straightforward. We will show the sufficient conditions for \lya{2} in the case of entropic maps in Section \ref{sec:entropic} and some classes of utility-based shortfalls in Section \ref{sec:shortfall}. 

\subsubsection*{Conditions for coherent risk maps}
For a special case, when applying coherent risk maps, we show below that the above set of conditions can be replaced by a more conventional set of conditions. 
\begin{assumption}\label{asmp:coherent} \rm
Suppose \lya{1b'} and \lya{1a'} hold with some nonnegative $\mathcal B(\mathsf X)$-measurable function $\tilde w_0$, positive constants $C$, $\tilde \gamma_0 \in (0,1)$ and $\tilde K_0$. In addition,
\begin{description}
 \item[\rm \lya{2'}] there exist a probability measure $\mu \in \mathscr P_{1+\tilde w_0}$, a constant $\alpha > 0$ such that 
  \begin{align*}
   \mathcal R_{x,a}(v) - \mathcal R_{x,a}(u) \geq \alpha \mu[v - u], \forall v \geq u \in \mathscr B_{1 + \tilde w_0}
  \end{align*}
  holds for all $x \in \mathsf B_0 := \{ x \in \mathsf X | \tilde w_0(x) \leq 2 R_0 \}$ and $a \in \mathsf A(x)$, with some $R_0 > \tilde K_0/(1-\tilde \gamma_0)$.
\end{description}
\end{assumption}

\begin{proposition}\label{prop:coherent}
Let $\mathcal R$ be a coherent risk map w.r.t.\ an MCP. Suppose Assumption \ref{asmp:coherent} holds. Then $\gamma_0 := \frac{R_0 - \tilde K_0 + \tilde \gamma_0}{1+R_0}\in (\tilde \gamma_0, 1)$ and Assumption \ref{asmp:subset} holds with $\gamma_0$, $w_0 = \frac{C}{\gamma_0 - \tilde \gamma_0}\tilde w_0, K_0 = C + \frac{C \tilde K_0}{\gamma_0 - \tilde \gamma_0} \textrm{ and } K = K_0 / \alpha.$
\end{proposition}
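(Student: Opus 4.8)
The plan is to verify the three ingredients packaged into Assumption \ref{asmp:subset} — that $\gamma_0 \in (\tilde\gamma_0,1)$, that \lya{1} holds, and that \lya{2} holds — in turn, invoking coherence only through the already-established Proposition \ref{prop:coherence:bound}.

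First I would dispose of the elementary algebra for $\gamma_0 = \frac{R_0 - \tilde K_0 + \tilde\gamma_0}{1+R_0}$. Clearing the denominator, $\gamma_0 > \tilde\gamma_0$ is equivalent to $R_0(1-\tilde\gamma_0) > \tilde K_0$, i.e.\ to $R_0 > \tilde K_0/(1-\tilde\gamma_0)$, which is precisely the restriction imposed on $R_0$ in \lya{2'}; and $\gamma_0 < 1$ reduces to $\tilde\gamma_0 - \tilde K_0 < 1$, immediate from $\tilde\gamma_0 < 1$ and $\tilde K_0 > 0$. With $\gamma_0 \in (\tilde\gamma_0,1)$ secured, the \lya{1} part of Assumption \ref{asmp:subset} is nothing but Proposition \ref{prop:coherence:bound} applied to Assumption \ref{assp:weight} (namely \lya{1a'} and \lya{1b'}), which delivers \lya{1} with exactly the stated $w_0 = \frac{C}{\gamma_0-\tilde\gamma_0}\tilde w_0$ and $K_0 = C + \frac{C\tilde K_0}{\gamma_0-\tilde\gamma_0}$. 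I would also record here the compatibility of the two small sets: since $w_0$ is a positive multiple of $\tilde w_0$ one has $\mathscr B_{1+w_0} = \mathscr B_{1+\tilde w_0}$, and substituting the chosen $\gamma_0$ and $K_0$ reduces the identity $\frac{\gamma_0-\tilde\gamma_0}{C}\cdot\frac{2K_0}{1-\gamma_0}=2R_0$, so that the level set $\{w_0 \leq \frac{2K_0}{1-\gamma_0}\}$ in \lya{2} coincides with $\{\tilde w_0 \leq 2R_0\}$ in \lya{2'} and the minorization is available at exactly the points needed below.

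The core is then \lya{2} with $K = K_0/\alpha$. Fix $x,y \in \mathsf B_0$, $a \in \mathsf A(x)$, $b \in \mathsf A(y)$, and $v \in \mathscr B_{1+w_0} = \mathscr B_{1+\tilde w_0}$ with $\lvert v \rvert \leq w_0 + K$. The idea is to apply \lya{2'} to two ordered pairs. Since $w_0 + K \geq v$, \lya{2'} at $(x,a)$ gives $\mathcal R_{x,a}(w_0+K) - \mathcal R_{x,a}(v) \geq \alpha\mu[w_0+K-v]$; since $v \geq -w_0-K$, \lya{2'} at $(y,b)$ gives $\mathcal R_{y,b}(v) - \mathcal R_{y,b}(-w_0-K) \geq \alpha\mu[v+w_0+K]$. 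Adding these, the $\pm v$ terms cancel under the linear functional $\mu[\cdot]$, leaving $2\alpha\mu[w_0+K] = 2\alpha\mu[w_0] + 2\alpha K$. Because $w_0 \geq 0$ and $\mu$ is a probability measure, $\mu[w_0] \geq 0$, so the left-hand side of \lya{2} is at least $2\alpha K = 2K_0$, which is the required bound.

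I expect the only real subtleties to be bookkeeping rather than a deep obstacle: ensuring \lya{2'} is genuinely applicable, which rests on the two identifications in the second paragraph, namely $\mathscr B_{1+w_0}=\mathscr B_{1+\tilde w_0}$ and the coincidence of the relevant level sets, together with the cancellation of the arbitrary intermediate function $v$ — this cancellation is precisely what lets the estimate go through with no control on $v$ beyond $\lvert v\rvert\le w_0+K$. A minor point is the strictness $K>K_0$ demanded in \lya{2}: testing \lya{2'} on two functions differing by a nonnegative constant forces $\alpha\le 1$ (via translation invariance and $\mu$ being a probability measure), so $K=K_0/\alpha\ge K_0$, with strict inequality in the generic case $\alpha<1$.
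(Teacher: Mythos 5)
Your proposal is correct, and on the decisive point---the verification of \lya{2}---it follows a genuinely different and more direct route than the paper. The preliminaries agree: like you, the paper obtains \lya{1} from Proposition \ref{prop:coherence:bound}, notes $\gamma_0\in(\tilde\gamma_0,1)$ (omitting the algebra you spell out), and identifies the two small sets through the identity $\frac{CR_0}{\gamma_0-\tilde\gamma_0}=\frac{K_0}{1-\gamma_0}$, which is exactly your level-set identification. For \lya{2}, however, the paper proceeds by setting $w'=w_0+K$, applying \lya{2'} to the ordered pair $(w',v)$ to get the centered bound $\mathcal R_{x,a}(v)-\alpha\mu(v)\le(1-\alpha)K+\mathcal R_{x,a}(w_0)$, repeating this for $-v$, and then invoking coherence via $\mathcal R_{y,b}(-v)\ge-\mathcal R_{y,b}(v)$ to conclude $\mathcal R_{x,a}(v)-\mathcal R_{y,b}(v)\le 2(1-\alpha)K+\mathcal R_{x,a}(w_0)+\mathcal R_{y,b}(w_0)$. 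That final inequality is in fact weaker than \lya{2}: by translation invariance, \lya{2} is equivalent to the same bound with $-\mathcal R_{y,b}(-w_0)$ in place of $+\mathcal R_{y,b}(w_0)$, and subadditivity gives $-\mathcal R_{y,b}(-w_0)\le\mathcal R_{y,b}(w_0)$, so the implication runs the wrong way; the paper's closing claim that its displayed bound ``implies \lya{2}'' therefore leaves a gap. Your argument---apply \lya{2'} to the two ordered pairs $(w_0+K,v)$ and $(v,-w_0-K)$ that literally appear in \lya{2} and add, letting the $v$-terms cancel under the linear functional $\mu[\cdot]$---avoids this entirely: it yields \lya{2} exactly as stated, it uses coherence nowhere outside Proposition \ref{prop:coherence:bound} (so this part of your proof would apply verbatim to non-coherent risk maps satisfying \lya{1} and \lya{2'}), and it is the natural generalization of the paper's own risk-neutral remark following Theorem \ref{th:inv_bound}, where the same two-pair cancellation is carried out for $Q_{x,a}$. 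Your final observation is also apt: both you and the paper take $K=K_0/\alpha$, which meets the strict requirement $K>K_0$ in \lya{2} only when $\alpha<1$; translation invariance forces $\alpha\le1$, and in the borderline case $\alpha=1$ one simply takes any $K>K_0$, your estimate then giving a left-hand side of at least $2K>2K_0$.
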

\begin{proof}
First, the inequalities $\tilde \gamma_0 < \gamma_0 < 1$ can be easily verified and therefore the proof is omitted here. Second, Proposition \ref{prop:coherence:bound} implies \lya{1}. 

It remains to verify \lya{2'}. First, it is easy to verify that $\frac{C R_0}{\gamma_0 - \tilde \gamma_0} = \frac{K_0}{ 1 - \gamma_0}$ and 
\begin{align*}
 \mathsf B_0 =  \left\{ x \in \mathsf X \middle|\tilde w_0(x) \leq 2 R_0 \right\} = & \left\{ x \in \mathsf X \middle| w_0(x) \leq \frac{2 K_0}{ 1 - \gamma_0} \right\}. 
\end{align*}
Let $w' := w_0 + K$. Then for any $v \in \mathscr B_w$ satisfying $\lvert v \rvert \leq w'$ and $x \in \mathsf B_0, a \in \mathsf A(x)$, \lya{2'} implies $\mathcal R_{x,a}(v) - \mathcal R_{x,a}(w') \leq \alpha \mu(v - w')$ and hence,
\begin{align*}
 \mathcal R_{x,a}(v) - \alpha \mu(v) \leq & \mathcal R_{x,a}(w') - \alpha \mu(w') \\
 = & (1-\alpha) K + \mathcal R_{x,a} (w_0) - \alpha \mu(w_0) \leq (1-\alpha) K + \mathcal R_{x,a} (w_0).
\end{align*}
Repeating the above derivation for $-v \leq w'$, we obtain $\mathcal R_{x,a}(-v) - \alpha \mu(-v) \leq (1-\alpha) K + \mathcal R_{x,a} (w_0)$. The coherency of $\mathcal R$ yields $\mathcal R_{x,a}(-v) \geq - \mathcal R_{x,a}(v)$ and hence, 
\begin{align*}
 - \mathcal R_{x,a}(v) + \alpha \mu(v) \leq (1-\alpha) K + \mathcal R_{x,a} (w_0).
\end{align*}
Thus, for any $x, y \in \mathsf B_0, a \in \mathsf A(x), b \in \mathsf A(y)$, we have
\begin{align*}
 \mathcal R_{x,a}(v) - \mathcal R_{y, b}(v) \leq 2 (1-\alpha) K + \mathcal R_{x,a}(w_0) + \mathcal R_{y,b}(w_0).
\end{align*}
This implies that \lya{2} holds with $K = K_0 / \alpha$. \quad
\end{proof}


\subsection{Geometric contraction} \label{sec:geo:cont}
In this subsection, we state sufficient conditions, under which $\mathcal R^\pi$ is a contraction, similar to the geometric contraction for standard Markov chains stated in \eqref{eq:mc:contraction}. We first introduce the concept of \emph{upper envelope}.
\begin{definition}
 A coherent risk map $\bar{\mathcal R}^{(w,K)}$ is said to be an \emph{upper envelope} of a valuation map $\mathcal R$ given a bound $K \in \mathbb R_+$, if for all $v, u \in \mathscr B_w^{(K)}$,
  $$
  \mathcal R_{x,a}(v) - \mathcal R_{x,a}(u) \leq \bar{\mathcal R}^{(w,K)}_{x,a}(v-u), \forall (x,a) \in \mathsf K.$$
\end{definition}

Let $w_0$ be a function satisfying Assumption \lya{1} and $w = 1+ K^{-1} w_0$ with $K$ satisfying \lya{2}. Analogous to the assumption applied to risk-neutral MCPs (see Assumption \ref{assp:standard}) and the one applied in risk-sensitive MCPs by \citet{Shen2013}, we introduce a set of conditions on the upper envelope.
\begin{assumption}\label{assp:upper:envelope}\rm
 \begin{description}
  \item[\rm\ue{1}] there exist constants $\gamma \in (0,1)$, $\bar K > 0$ and an upper envelope $\bar{\mathcal R}^{(w,K)}$ such that
  \begin{align*}
    \bar{\mathcal R}^{(w,K)}_{x,a}(w_0) \leq \gamma w_0(x) + \bar K, \forall (x,a) \in \mathsf K. 
  \end{align*}
  \item[\rm\ue{2}] there exist a constant $\alpha \in (0,1)$ and a probability measure $\mu\in \mathscr P_{1+w_0}$ satisfying
  \begin{align*}
   \bar{\mathcal R}^{(w,K)}_{x,a}(v) - \bar{\mathcal R}^{(w,K)}_{x,a}(u) \geq \alpha \mu[v-u], \forall x \in \mathsf B, a\in \mathsf A(x), v \geq u \in \mathscr B_{1+w_0}
  \end{align*}
  where $\mathsf B := \{x \in \mathsf X| w_0(x) \leq R \}$ for some $R > \frac{2 \bar K}{1 - \gamma}$. 
 \end{description}
\end{assumption}
\begin{remark}\label{rm:contraction}
 \rm Apparently, if $\mathcal R$ is coherent, then $\mathcal R$ is an upper envelope of itself for all bounds $K > 0$, due to its subadditivity stated in \eqref{eq:subadditive}. Hence, Assumption \ref{assp:upper:envelope} is equivalent to Assumptions \lya{1a'} and \lya{2'}. On the other hand, we have shown in Proposition \ref{prop:coherent} that  Assumptions \lya{1b'}, \lya{1a'} and \lya{2'} imply Assumption \ref{asmp:subset}. Hence, Assumptions \ref{asmp:subset} and \ref{assp:upper:envelope} can be reduced to Assumption \ref{asmp:coherent}.
\end{remark}

We state now the main result of this subsection.
\begin{theorem}\label{th:pe:contraction}
 Suppose Assumption \ref{assp:upper:envelope} holds. Then there exists a constant $\bar \alpha \in (0,1)$ and $\beta > 0$ such that $$\lVert \mathcal R^\pi(v) - \mathcal R^\pi(u) \rVert_{s,1 + \beta w_0} \leq \bar \alpha \lVert v - u \rVert_{s,1 + \beta w_0}, \forall v, u \in \mathscr B_{w}^{(K)}, \pi \in \Delta.$$
\end{theorem}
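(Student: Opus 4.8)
The plan is to reduce the asserted contraction for the general map $\mathcal R^\pi$ to a contraction for its coherent upper envelope, and then to run a nonlinear version of the Hairer--Mattingly argument that underlies \eqref{eq:mc:contraction}. Write $\bar{\mathcal R} := \bar{\mathcal R}^{(w,K)}$, $g := v - u$ and $\phi := \mathcal R^\pi(v) - \mathcal R^\pi(u)$. Since $v,u \in \mathscr B_w^{(K)}$, applying the defining inequality of the upper envelope to both orderings and integrating against $\pi$ gives the two-sided squeeze $-\bar{\mathcal R}^\pi_x(-g) \le \phi(x) \le \bar{\mathcal R}^\pi_x(g)$, whence
\begin{align*}
 \phi(x) - \phi(y) \le \bar{\mathcal R}^\pi_x(g) + \bar{\mathcal R}^\pi_y(-g), \quad x \ne y.
\end{align*}
Because the right-hand side is symmetric under exchanging $x,y$ together with replacing $g$ by $-g$, it suffices to bound this quantity for every ordered pair; doing so controls $\lvert \phi(x)-\phi(y)\rvert$ and hence $\lVert \phi \rVert_{s,1+\beta w_0}$.

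First I would center $g$. By translation invariance the right-hand side above is unchanged if $g$ is replaced by $g-c$ for any $c\in\mathbb R$, and the seminorm $\lVert g\rVert_{s,1+\beta w_0}$ is likewise invariant, so by Lemma \ref{lm:semi} I may assume $\lvert g\rvert \le \rho\,\tilde w$ pointwise, where $\tilde w := 1+\beta w_0$ and $\rho := \lVert g\rVert_{s,1+\beta w_0}$. The centered $g$ still lies in $\mathscr B_{1+w_0}=\mathscr B_{1+\beta w_0}$, so \ue{2} remains applicable. The coherence of $\bar{\mathcal R}$ (monotonicity, positive homogeneity, translation invariance) then lets me replace $g,-g$ by the dominating function $\rho\tilde w$ under the maps whenever convenient, and in particular yields $\bar{\mathcal R}^\pi_x(\rho\tilde w)=\rho+\rho\beta\bar{\mathcal R}^\pi_x(w_0)$.

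The core estimate splits along the level set $\mathsf B=\{w_0\le R\}$. In the \emph{Lyapunov regime} (at least one of $x,y$ outside $\mathsf B$, so $w_0(x)+w_0(y)>R$) I bound by monotonicity $\bar{\mathcal R}^\pi_x(g)+\bar{\mathcal R}^\pi_y(-g)\le \bar{\mathcal R}^\pi_x(\rho\tilde w)+\bar{\mathcal R}^\pi_y(\rho\tilde w)$ and evaluate the right side via homogeneity, translation invariance and \ue{1} to obtain $\rho\bigl(2+2\beta\bar K+\beta\gamma(w_0(x)+w_0(y))\bigr)$; comparing with the target $\bar\alpha\rho\bigl(2+\beta(w_0(x)+w_0(y))\bigr)$ and invoking $R>2\bar K/(1-\gamma)$ shows this holds past the threshold for suitable $\bar\alpha$. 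In the \emph{Doeblin regime} ($x,y\in\mathsf B$) I exploit \ue{2}: setting $\hat{\mathcal R}_{x,a}(\cdot):=\bar{\mathcal R}_{x,a}(\cdot)-\alpha\mu[\cdot]$, condition \ue{2} makes $\hat{\mathcal R}_{x,a}$ monotone, while $\hat{\mathcal R}_{x,a}(\cdot+c)=\hat{\mathcal R}_{x,a}(\cdot)+(1-\alpha)c$. The crucial point is that the linear terms $\alpha\mu[g]$ and $\alpha\mu[-g]$ cancel, so
\begin{align*}
 \bar{\mathcal R}^\pi_x(g)+\bar{\mathcal R}^\pi_y(-g)=\hat{\mathcal R}^\pi_x(g)+\hat{\mathcal R}^\pi_y(-g)\le \hat{\mathcal R}^\pi_x(\rho\tilde w)+\hat{\mathcal R}^\pi_y(\rho\tilde w),
\end{align*}
and evaluating the right side (using $\mu[w_0]<\infty$ since $\mu\in\mathscr P_{1+w_0}$) produces the reduced constant $2(1-\alpha)\rho$ in place of $2\rho$, which is the gain that forces the factor below $1$ on $\mathsf B$.

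Finally I would calibrate the free parameters and conclude: the two regime estimates give $\phi(x)-\phi(y)\le \bar\alpha\rho(\tilde w(x)+\tilde w(y))$ for all $x\ne y$, so $\lVert \mathcal R^\pi(v)-\mathcal R^\pi(u)\rVert_{s,1+\beta w_0}\le \bar\alpha\lVert v-u\rVert_{s,1+\beta w_0}$ uniformly in $\pi\in\Delta$, with $\beta$ absorbed via $\mathscr B_{1+\beta w_0}=\mathscr B_w$. The main obstacle is exactly the simultaneous choice of $\beta$ and $\bar\alpha$: the Lyapunov regime forces $\beta$ \emph{bounded below} (so that $\beta w_0$ dominates the additive overhead once $w_0(x)+w_0(y)$ exceeds $R$), whereas the Doeblin regime forces $\beta$ \emph{bounded above} (so that the term $\beta\bar K$ does not swallow the minorization gain $1-\alpha$); this window is nonempty precisely because $R>2\bar K/(1-\gamma)$ holds \emph{strictly}, which permits taking $\bar\alpha$ close enough to $1$ that the lower and upper bounds on $\beta$ are compatible. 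This quantitative bookkeeping — the step where \cite{hairer2011yet} balance their constants — is where the real care is required.
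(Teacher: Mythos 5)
Your proposal is correct and takes essentially the same route as the paper's own proof: reduction to the coherent upper envelope, centering via Lemma \ref{lm:semi}, a two-regime split around the level set $\mathsf B$, subtraction of the minorization term to gain the factor $1-\alpha$ (the paper's auxiliary map $\tilde{\mathcal U}_x = \frac{1}{1-\alpha}\bar{\mathcal U}_x - \frac{\alpha}{1-\alpha}\mu$ is just your $\hat{\mathcal R}$ rescaled, exploiting the same cancellation of the linear terms before any triangle inequality is applied), and the choice $\beta \propto 1/\bar K$. The remaining differences are cosmetic bookkeeping: you pair $g$ at $x$ with $-g$ at $y$ where the paper bounds via $\bar{\mathcal U}_x(\lvert v-u\rvert)$, your case split is ``both in $\mathsf B$'' versus the paper's ``$w_0(x)+w_0(y)\leq R$'', and you fix $\bar\alpha$ before $\beta$ whereas the paper fixes $\beta$ first and then reads off $\bar\alpha = \gamma_1 \vee \gamma_2$.
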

\begin{proof}
 The proof is in essence similar to the proof of \citet[Theorem 3.11]{Shen2013}. For readers' convenience, we incorporate it in the Appendix.  
\end{proof}

The Lyapunov-type condition \ue{1} yields the following result. 
\begin{lemma}\label{lm:uni}
 Suppose that Assumptions \ref{asmp:subset} and \ue{1} hold. 
 Then $$\lim_{n \rightarrow \infty} \frac{1}{n} \lVert \mathcal T^{(\boldsymbol \pi,  n)}(v) - \mathcal T^{(\boldsymbol \pi,  n)}(u) \rVert_{w} = 0, \forall v,u \in \mathscr B_{w}^{(K)}, \bs \pi \in \Pi_M.$$
\end{lemma}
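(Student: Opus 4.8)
The plan is to establish the stronger fact that $\lVert \mathcal T^{(\bs\pi,n)}(v) - \mathcal T^{(\bs\pi,n)}(u) \rVert_{w}$ stays \emph{bounded} uniformly in $n$, which trivially gives the claimed $\tfrac1n$-decay. Introduce the partial iterates $y_k := \mathcal T^{\pi_k}(\mathcal T^{\pi_{k+1}}(\cdots \mathcal T^{\pi_n}(v) \cdots))$ and $z_k$ defined analogously from $u$, for $k = 0, \ldots, n$, together with $y_{n+1} := v$, $z_{n+1} := u$, so that $y_k = \mathcal T^{\pi_k}(y_{k+1})$, $z_k = \mathcal T^{\pi_k}(z_{k+1})$ and the target difference is $g_0 := y_0 - z_0$. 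Since $v,u \in \mathscr B_w^{(K)}$, the Corollary following Theorem~\ref{th:inv_bound} guarantees that every partial iterate again lies in $\mathscr B_w^{(K)}$. Because the cost $c^{\pi_k}$ cancels in a one-step difference, $y_k - z_k = \mathcal R^{\pi_k}(y_{k+1}) - \mathcal R^{\pi_k}(z_{k+1})$, and the defining inequality of the upper envelope (applicable precisely because $y_{k+1}, z_{k+1} \in \mathscr B_w^{(K)}$), integrated against $\pi_k(\diff a | x)$, yields the two-sided estimate $g_k \leq \bar{\mathcal R}^{(w,K),\pi_k}(g_{k+1})$ and $-g_k \leq \bar{\mathcal R}^{(w,K),\pi_k}(-g_{k+1})$, where $g_k := y_k - z_k$.

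Next I would propagate, by downward induction on $k$, a symmetric affine bound $\lvert g_k \rvert \leq p_k w_0 + q_k$ with nonnegative scalars $p_k, q_k$. The base case $k = n+1$ holds with $p_{n+1} := K^{-1}(\lVert v \rVert_w + \lVert u \rVert_w)$ and $q_{n+1} := \lVert v \rVert_w + \lVert u \rVert_w$, since $\lvert v - u \rvert \leq (\lVert v \rVert_w + \lVert u \rVert_w) w = (\lVert v \rVert_w + \lVert u \rVert_w)(1 + K^{-1} w_0)$. For the inductive step, feed $\pm g_{k+1} \leq p_{k+1} w_0 + q_{k+1}$ into the one-step estimate and exploit that $\bar{\mathcal R}^{(w,K)}$ is coherent: monotonicity, translation invariance and positive homogeneity give $\bar{\mathcal R}^{(w,K),\pi_k}(p_{k+1} w_0 + q_{k+1}) = q_{k+1} + p_{k+1} \bar{\mathcal R}^{(w,K),\pi_k}(w_0)$, and \ue{1} then bounds the last term by $p_{k+1}(\gamma w_0 + \bar K)$. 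This delivers $\lvert g_k \rvert \leq \gamma p_{k+1} w_0 + (q_{k+1} + \bar K p_{k+1})$, so the induction closes with the recursion $p_k = \gamma p_{k+1}$, $q_k = q_{k+1} + \bar K p_{k+1}$.

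Finally, solving these recursions gives $p_0 = \gamma^{n+1} p_{n+1}$ and $q_0 = q_{n+1} + \bar K \sum_{j=0}^{n} \gamma^{j} p_{n+1} \leq q_{n+1} + \tfrac{\bar K}{1-\gamma} p_{n+1}$, both bounded uniformly in $n$ (indeed $p_0 \to 0$). Since $w_0 \leq K w$ and $w \geq 1$, the bound $\lvert g_0 \rvert \leq p_0 w_0 + q_0$ yields $\lVert g_0 \rVert_w \leq K p_0 + q_0$, a constant independent of $n$; dividing by $n$ and letting $n \to \infty$ proves the lemma. The only delicate points are the bookkeeping that every argument fed to the upper envelope indeed lies in $\mathscr B_w^{(K)}$ (so that its defining inequality applies), which is exactly what the forward-invariance Corollary secures, and the observation that the geometric factor $\gamma < 1$ supplied by \ue{1} is what simultaneously contracts the $w_0$-coefficient and keeps the constant part summable; everything else is the routine linear recursion above.
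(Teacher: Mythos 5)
Your proof is correct and follows essentially the same strategy as the paper's: both establish uniform boundedness in $n$ of $\lVert \mathcal T^{(\bs \pi,n)}(v) - \mathcal T^{(\bs \pi,n)}(u) \rVert_w$ by propagating an affine bound (geometrically contracting coefficient on $w_0$ plus a summable constant) through the iterations, using the forward invariance from Theorem \ref{th:inv_bound} to justify applying the upper envelope at every step and \ue{1} together with coherence to close the recursion. Your bookkeeping with $p_k, q_k$ in terms of $w_0$ is just an explicit rewriting of the paper's induction bound $\lVert v-u \rVert_w \left( \gamma^n w + K' \sum_{k=0}^{n-1}\gamma^k \right)$.
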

\begin{proof}
 It is sufficient to show that $\lVert \mathcal T^{(\boldsymbol \pi,  n)}(v) - \mathcal T^{(\boldsymbol \pi,  n)}(u) \rVert_{w}$ is uniformly bounded. Indeed, let $\mathcal U_x^\pi(v) := \int \pi(\diff a| x) \bar{\mathcal R}^{(w,K)}_{x,a}(v)$ and $K' := \frac{\bar K}{K} +1 - \gamma > 0$. By \ue{1}, we have for each $x \in \mathsf X$ and $\pi \in \Delta$, 
 \begin{align*}
  \mathcal T^\pi_x(v)- \mathcal T^\pi_x(u) \leq \mathcal U_x^\pi(\lvert v- u\rvert) \leq & \lVert v- u\rVert_{w} \left(\frac{\gamma}{K} w_0(x) + \frac{\bar K}{K} + 1 \right), 
 \end{align*}
 which implies that $\lvert \mathcal T^\pi_x(v)- \mathcal T^\pi_x(u) \rvert \leq \lVert v- u\rVert_{w} (\gamma w(x) + K' ).$

In addition, by Theorem \ref{th:inv_bound}, $\lVert \mathcal T^{(\boldsymbol \pi,  n)}(v) \rVert_{s,w} \leq K$ holds for all $n \in \mathbb N$. Hence, by induction w.r.t.\ $n$, we have for $n = 2, 3, \ldots$
\begin{align*}
 \lvert \mathcal T^{(\boldsymbol \pi,  n)}_x(v)- \mathcal T^{(\boldsymbol \pi,  n)}_x(u) \rvert \leq & \mathcal U_x^{\pi_0}(\lvert \mathcal T^{([\pi_1, \ldots], n-1)}(v) - \mathcal T^{([\pi_1, \ldots], n-1)}(u)\rvert) \\
 \leq & \lVert v - u \rVert_w \mathcal U_x^{\pi_0} \left( \gamma^{n-1} w + K' \sum_{k=0}^{n-2} \gamma^k \right) \\
 \leq & \lVert v - u \rVert_w \left( \gamma^{n} w(x) + K' \sum_{k=0}^{n-1} \gamma^k \right), \forall x \in \mathsf X, 
\end{align*}
which implies that $\lVert \mathcal T^{(\boldsymbol \pi,  n)}(v) - \mathcal T^{(\boldsymbol \pi,  n)}(u) \rVert_{w} \leq \frac{K'}{1-\gamma}$. \quad
\end{proof}

\subsection{Nonlinear Poisson equation}
Similar to the risk-neutral average MCPs (see, e.g., \cite[Chapter 10]{hernandez1999further}), the optimization problem of average risk-sensitive objective defined in \eqref{eq:avrk} is closely related to the following nonlinear Poisson equation
\begin{align}
  \rho + h(x) = \inf_{a \in \mathsf A(x)} \left( c(x,a) + \mathcal R(h|x,a) \right), \forall x \in \mathsf X, \label{eq:pe}
\end{align}
where $(\rho,h) \in \mathbb R \times \mathscr B_{w}$ is its solution. 
Define the following operator
\begin{align}
 \mathcal F_x(v) = \mathcal F(v|x) := \inf_{a \in \mathsf A(x)} \left( c(x,a) + \mathcal R(v|x,a) \right). \label{eq:F}
\end{align}

\subsubsection*{Existence of optimal selectors}
In order to guarantee the existence of an ``optimal selector'' and the measurablity of the operator $\mathcal F$, we assume
\begin{assumption}\label{assp:select}
\rm For all $x \in \mathsf X$,
 \begin{description}
  \item[\rm\os{1}] the action space $\mathsf A(x)$ is compact;
  \item[\rm\os{2}] the cost function $c(x,a)$ is lower semicontinuous (l.s.c.) on $\mathsf A(x)$;
  \item[\rm\os{3}] $a \mapsto \mathcal R(v|x,a)$ is l.s.c.\ on $\mathsf A(x)$, for each $v \in \mathscr B_w$ satisfying $\lvert v \rvert \leq K + w_0$.
 \end{description}
\end{assumption}

Note that each $v \in \mathscr B_w^{(K)}$ can be represented as $v = \tilde v + C$ with some constant $C$ and $\tilde  v \in \mathscr B_w$ satisfying $\lvert \tilde v \rvert \leq K + w_0$. Due to the translation invariance, \os{3} immediately imply that $a \mapsto \mathcal R(v|x,a)$ is l.s.c.\ on $\mathsf A(x)$, for each $v \in \mathscr B_w^{(K)}$ and $x \in \mathsf X$. Hence, a direct application of \cite[Lemma 8.3.8(a)]{hernandez1999further} yields
\begin{proposition}
 Let $\mathcal R$ be a risk map on an MCP. Suppose Assumption \ref{assp:select} holds. Then for each $v \in \mathscr B_w^{(K)}$ and $x \in \mathsf X$, there exists a deterministic policy $f \in \Delta_D$ such that
 \begin{align*}
  c^f(x) + \mathcal R^f(v|x) = \mathcal F(v|x) := \inf_{a \in \mathsf A(x)} \left( c(x,a) + \mathcal R(v|x,a) \right)
 \end{align*}
and furthermore, $\mathcal F(v|\cdot)$ is $\mathcal B(\mathsf X)$-measurable.
\end{proposition}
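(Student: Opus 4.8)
The plan is to reduce the claim to a direct invocation of the measurable-selection machinery in \cite[Lemma 8.3.8]{hernandez1999further}, whose hypotheses require the set-valued feasibility map to be compact-valued and measurable, and require the integrand being minimized to be lower semicontinuous in the action variable and measurable jointly. First I would fix $v \in \mathscr B_w^{(K)}$ and use the remark made just before the statement: by Lemma \ref{lm:semi} every such $v$ decomposes as $v = \tilde v + C$ with $C \in \mathbb R$ and $\lvert \tilde v \rvert \leq K + w_0$, so by translation invariance of $\mathcal R$ (Definition \ref{def:risk:measure}(ii)) we have $\mathcal R(v|x,a) = \mathcal R(\tilde v|x,a) + C$. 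Thus $a \mapsto \mathcal R(v|x,a)$ inherits lower semicontinuity on $\mathsf A(x)$ directly from \os{3}, which is stated precisely for functions satisfying $\lvert \tilde v \rvert \leq K + w_0$.

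Next I would assemble the integrand. Define $g(x,a) := c(x,a) + \mathcal R(v|x,a)$ on $\mathsf K$. For each fixed $x$, the map $a \mapsto g(x,a)$ is the sum of an l.s.c.\ function (by \os{2}) and an l.s.c.\ function (by the preceding paragraph), hence l.s.c.\ on the compact set $\mathsf A(x)$ (compactness by \os{1}). Lower semicontinuity on a compact set guarantees the infimum is attained, which is the source of the optimal action. For joint measurability, $c$ is $\mathcal B(\mathsf K)$-measurable by the definition of an MCP, and $\mathcal R(v|\cdot)$ is $\mathcal B(\mathsf K)$-measurable by part (ii) of the definition of a risk map; hence $g$ is $\mathcal B(\mathsf K)$-measurable.

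With these ingredients the hypotheses of \cite[Lemma 8.3.8(a)]{hernandez1999further} are met: the feasibility correspondence $x \mapsto \mathsf A(x)$ is compact-valued with $\mathsf K$ its Borel graph, and $g$ is a measurable function that is l.s.c.\ in $a$ for each $x$. That lemma then yields simultaneously the two assertions we need: the value function $\mathcal F(v|\cdot) = \inf_{a \in \mathsf A(x)} g(x,a)$ is $\mathcal B(\mathsf X)$-measurable, and there exists a measurable selector, i.e.\ a measurable map $x \mapsto f(x) \in \mathsf A(x)$ achieving the infimum. Packaging this selector as a deterministic single-step policy $f \in \Delta_D$ (which by definition concentrates unit mass at $f(x)$) gives $c^f(x) + \mathcal R^f(v|x) = \mathcal F(v|x)$, as required.

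The only genuine subtlety — and the step I would be most careful about — is the reduction via translation invariance, because \os{3} is stated only for representatives with $\lvert \tilde v \rvert \leq K + w_0$ rather than for all of $\mathscr B_w^{(K)}$; verifying that the seminorm bound $\lVert v \rVert_{s,w} \leq K$ indeed permits such a centered representative is exactly what Lemma \ref{lm:semi} supplies, since it identifies $\lVert v \rVert_{s,w}$ with $\min_{c} \lVert v + c \rVert_w$. Everything else is the routine verification that sums of l.s.c.\ functions are l.s.c.\ and that the cited selection theorem applies verbatim, so I expect no further obstacle.
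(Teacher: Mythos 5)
Your proof is correct and follows essentially the same route as the paper: the paper likewise decomposes $v = \tilde v + C$ with $\lvert \tilde v \rvert \leq K + w_0$, uses translation invariance so that \os{3} yields lower semicontinuity of $a \mapsto \mathcal R(v|x,a)$ for every $v \in \mathscr B_w^{(K)}$, and then applies \cite[Lemma 8.3.8(a)]{hernandez1999further}. Your extra details---the explicit appeal to Lemma \ref{lm:semi} to produce the centered representative, the joint $\mathcal B(\mathsf K)$-measurability of the integrand, and the attainment of the infimum on the compact set $\mathsf A(x)$---simply spell out what the paper's ``direct application'' leaves implicit.
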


\begin{remark}\rm 
 In risk-neutral MCPs, where $\mathcal R_{x,a}(\cdot) = Q_{x,a}[\cdot]$, \os{3} is satisfied (see e.g., \cite[Lemma 8.3.7(a)]{hernandez1999further}) if 
 \begin{description}
  \item[\rm\os{3a}] $a \mapsto \mathcal R_{x,a}[v]$ is l.s.c.\ on $\mathsf A(x)$, for each $v \in \mathscr B$; and
  \item[\rm\os{3b}] $a \mapsto \mathcal R_{x,a}[w_0]$ is l.s.c.\ on  $\mathsf A(x)$.
 \end{description}
For general risk maps, we need to further assume that $\mathcal R$ is l.s.c.\ w.r.t.\ $v$ on $\mathscr B_w^{(K)}$: 
\begin{description}
 \item[\rm\os{3c}] Let $\{ v_n \in \mathscr B, n=0,1,\ldots \}$ be a sequence of functions that pointwise converges to $v \in \mathscr B_w^{(K)}$. Then for all $(x,a) \in \mathsf K$, $\liminf_{n\rightarrow \infty}\mathcal R(v_n|x,a) \geq \mathcal R(v|x,a)$.
\end{description}
By extending the proof of \cite[Lemma 8.3.7(a)]{hernandez1999further}, \os{3a} --  \os{3c} imply \os{3}. Note that in the case of risk-neutral MCPs, \os{3c} is already satisfied due to Fatou's lemma. However, it does not hold for general risk maps (see \cite{Delbaen_2000} for a detailed discussion). Note that because verification of \os{3c} is usually not straightforward, we will verify \os{3} directly for various examples of risk maps shown in Section \ref{sec:examples}. 

\end{remark}

\subsubsection*{Existence of a unique solution} We show now the operator $\mathcal F$ is also a contraction under the weighted seminorm. 
\begin{lemma}\label{lm:F}
 Suppose Assumptions \ref{assp:upper:envelope} and \ref{assp:select} hold. Then there exists a constant $\bar \alpha \in (0,1)$ and $\beta > 0$ such that $$\lVert \mathcal F(v) - \mathcal F(u) \rVert_{s,1 + \beta w_0} \leq \bar \alpha \lVert v - u \rVert_{s,1 + \beta w_0}, \forall v, u \in \mathscr B_{w}^{(K)}.$$
\end{lemma}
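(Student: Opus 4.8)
The plan is to deduce the contraction for the infimum operator $\mathcal F$ from the contraction already established for the fixed-policy operators $\mathcal R^\pi$ in Theorem \ref{th:pe:contraction}. The key structural fact is that $\mathcal F_x(v) = \inf_{\pi \in \Delta} \mathcal T^\pi_x(v)$, which by the existence of optimal selectors (Proposition following Assumption \ref{assp:select}) is attained by some deterministic $f \in \Delta_D$ for each $v$. I would take the constants $\bar\alpha \in (0,1)$ and $\beta > 0$ to be exactly those produced by Theorem \ref{th:pe:contraction}, so that $\lVert \mathcal R^\pi(v) - \mathcal R^\pi(u) \rVert_{s,1+\beta w_0} \leq \bar\alpha \lVert v - u\rVert_{s,1+\beta w_0}$ holds for all $\pi \in \Delta$ and all $v,u \in \mathscr B_w^{(K)}$.

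First I would fix $v, u \in \mathscr B_w^{(K)}$ and note that $\mathcal T^\pi_x(v) - \mathcal T^\pi_x(u) = \mathcal R^\pi_x(v) - \mathcal R^\pi_x(u)$, since the cost term $c^\pi(x)$ cancels; hence the contraction for $\mathcal R^\pi$ is equivalently a contraction for $\mathcal T^\pi$, and I may work with $\mathcal F_x(v) = \inf_\pi \mathcal T^\pi_x(v)$. The central estimate is the standard ``inf of differences'' bound: for any two functions of $\pi$, $\inf_\pi \mathcal T^\pi_x(v) - \inf_\pi \mathcal T^\pi_x(u) \leq \sup_\pi \bigl( \mathcal T^\pi_x(v) - \mathcal T^\pi_x(u)\bigr)$. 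Choosing for $u$ a near-optimal (or, by the selector proposition, exactly optimal) policy $g$ attaining $\mathcal F_x(u) = \mathcal T^g_x(u)$, I get $\mathcal F_x(v) - \mathcal F_x(u) \leq \mathcal T^g_x(v) - \mathcal T^g_x(u)$, and symmetrically, using an optimal $f$ for $v$, $\mathcal F_y(v) - \mathcal F_y(u) \geq \mathcal T^f_y(v) - \mathcal T^f_y(u)$. To control the seminorm $\lVert \mathcal F(v) - \mathcal F(u)\rVert_{s,1+\beta w_0}$ I must bound, for each pair $x \neq y$, the quantity $\bigl(\mathcal F_x(v)-\mathcal F_x(u)\bigr) - \bigl(\mathcal F_y(v) - \mathcal F_y(u)\bigr)$ divided by $w(x)+w(y)$ with $w = 1 + \beta w_0$.

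The technical point is that the two one-sided bounds involve \emph{different} policies ($g$ at $x$, $f$ at $y$), so I cannot directly invoke the single-policy contraction, which requires the same $\pi$ at both points. I would resolve this exactly as in the proof of Theorem \ref{th:pe:contraction}: bound the difference using the upper envelope $\bar{\mathcal R}^{(w,K)}$, which dominates $\mathcal R_{x,a}(v) - \mathcal R_{x,a}(u)$ uniformly in $(x,a)$ on $\mathscr B_w^{(K)}$, so that the policy-dependence is absorbed into a coherent (hence subadditive) map to which the Doeblin-type condition \ue{2} and the Lyapunov-type condition \ue{1} apply uniformly. Concretely, I would observe that $\mathcal F_x(v) - \mathcal F_x(u) \leq \sup_a \bar{\mathcal R}^{(w,K)}_{x,a}(v-u)$ and $\geq \inf_a \bigl(-\bar{\mathcal R}^{(w,K)}_{x,a}(u-v)\bigr)$, reducing the whole problem to a two-point seminorm estimate for the coherent upper envelope — which is precisely the content already carried out in Theorem \ref{th:pe:contraction}, now applied to the uniform-over-policy bound rather than to a fixed $\mathcal R^\pi$.

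The main obstacle I expect is this policy-mismatch: verifying rigorously that the contraction factor $\bar\alpha$ survives when the optimizing policies at $x$ and at $y$ differ. The cleanest way to handle it is to prove the contraction first as a uniform statement over all $\pi \in \Delta$ simultaneously — i.e., that $\sup_\pi$ and $\inf_\pi$ of $\mathcal T^\pi_x(v) - \mathcal T^\pi_x(u)$ are both controlled by the same upper-envelope machinery — and then note that $\mathcal F_x(v) - \mathcal F_x(u)$ is sandwiched between these uniform bounds. Since the upper envelope does not depend on $\pi$, the Doeblin and Lyapunov estimates of Theorem \ref{th:pe:contraction} pass through the infimum without degradation, yielding the same $\bar\alpha$ and $\beta$. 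The remaining steps — the choice of $w = 1+\beta w_0$ and the splitting into the small set $\mathsf B$ and its complement — are routine and identical to those in the proof of Theorem \ref{th:pe:contraction}.
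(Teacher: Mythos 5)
Your proposal is correct and takes essentially the same route as the paper: the paper's proof likewise bounds $\mathcal F_x(v) - \mathcal F_x(u) \leq \sup_{a \in \mathsf A(x)} \left\{ \mathcal R_{x,a}(v) - \mathcal R_{x,a}(u) \right\} \leq \sup_{a \in \mathsf A(x)} \bar{\mathcal R}^{(w,K)}_{x,a}(v-u) =: \mathcal U_x(v-u) \leq \mathcal U_x(\lvert v-u\rvert)$, thereby absorbing the policy/action dependence into the coherent, policy-independent operator $\mathcal U_x$, and then reruns the two-case Lyapunov/Doeblin argument of Theorem \ref{th:pe:contraction} verbatim for $\mathcal U_x$, yielding the same $\bar\alpha$ and $\beta$. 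Your identification of the policy-mismatch obstacle and its resolution via the upper envelope is precisely the paper's argument.
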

\begin{proof}
 By definition we have
 \begin{align*}
  \mathcal F_x(v) - \mathcal F_x(u) \leq & \sup_{a \in \mathsf{A(x)}} \left\{ \mathcal R_{x,a}(v) - \mathcal R_{x,a}(u) \right\} \leq \sup_{a \in \mathsf{A(x)}} \bar{\mathcal R}^{(w,K)}_{x,a}(v-u), \forall x \in \mathsf X.
 \end{align*}
Define $\mathcal U_x(v) := \sup_{a \in \mathsf{A(x)}} \bar{\mathcal R}^{(w,K)}_{x,a}(v)$ and we have
\begin{align*}
 \mathcal F_x(v) - \mathcal F_x(u) \leq \mathcal U_x(v - u) \leq \mathcal U_x(\lvert v - u \rvert).
\end{align*}
The rest of the proof is similar to the proof of Theorem \ref{th:pe:contraction}. \quad
\end{proof}

\begin{theorem}\label{th:pe}
 Suppose that Assumptions \ref{asmp:subset}, \ref{assp:upper:envelope} and \ref{assp:select} hold. Then  there exist a unique $\rho\in \mathbb R$ and $ h \in \mathscr B_{w}$ such that $(\rho, h)$ satisfies the  Poisson equation
 \begin{align}
  \rho + h(x) = \inf_{a \in \mathsf A(x)} \left( c(x,a) + \mathcal R(h|x,a) \right), \forall  x \in \mathsf X.
 \end{align}
 In particular, if $\mathcal R$ is coherent, the assertion holds under Assumptions \ref{asmp:coherent} and \ref{assp:select}.
\end{theorem}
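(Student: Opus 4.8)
The plan is to establish existence and uniqueness of the solution $(\rho,h)$ to the nonlinear Poisson equation via a fixed-point argument on the space $\mathscr B_w^{(K)}$, exploiting the geometric contraction already proven in Lemma \ref{lm:F}. First I would fix the seminorm $\lVert\cdot\rVert_{s,1+\beta w_0}$ for the constant $\beta>0$ supplied by Lemma \ref{lm:F} and work on the bounded forward invariant set $\mathscr B_w^{(K)}$. By Theorem \ref{th:inv_bound}, the operator $\mathcal F$ maps this set into itself (since $\mathcal F$ is an infimum of the $\mathcal T_{x,a}$ and Theorem \ref{th:inv_bound} controls the seminorm of each such map), and Lemma \ref{lm:F} shows $\mathcal F$ contracts the seminorm with factor $\bar\alpha\in(0,1)$. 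The subtlety is that $\mathcal F$ itself is \emph{not} a contraction on $\mathscr B_w$ in the $w$-norm — only the seminorm contracts — so a naive Banach fixed-point theorem does not apply directly; the iterates are only determined up to an additive constant.

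To handle this I would pass to the quotient or, more concretely, track the additive constant separately. Pick any $v_0\in\mathscr B_w^{(K)}$ and define the Picard iterates $v_{n+1}:=\mathcal F(v_n)$. By the contraction in Lemma \ref{lm:F}, the seminorms $\lVert v_n - v_m\rVert_{s,1+\beta w_0}$ form a Cauchy sequence, so by Lemma \ref{lm:semi} there exist centering constants making the recentered iterates Cauchy in the full $w$-norm. The key observation is that translation invariance of $\mathcal R$ (hence of $\mathcal F$, which inherits monotonicity and translation invariance) gives $\mathcal F(v+c)=\mathcal F(v)+c$ for constants $c$, so subtracting off the drift is consistent with the dynamics. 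I would define $\rho_n := $ the increment (for instance $\mathcal F(v_n)$ evaluated at a fixed reference point minus $v_n$ there, or the mean increment), show $\rho_n$ converges to a limit $\rho\in\mathbb R$ using the geometric decay of the seminorm, and show the recentered functions $h_n := v_n - n\rho_n$ (suitably normalized) converge in $\mathscr B_w$ to a limit $h$. Passing to the limit in $v_{n+1}=\mathcal F(v_n)$ and using continuity of $\mathcal F$ in the $w$-norm on $\mathscr B_w^{(K)}$ then yields $\rho + h(x) = \mathcal F_x(h) = \inf_{a\in\mathsf A(x)}(c(x,a)+\mathcal R(h|x,a))$.

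The hard part will be extracting the scalar $\rho$ cleanly and proving its uniqueness together with the uniqueness of $h$ up to an additive constant. For existence of $\rho$, I expect to use the boundedness from Lemma \ref{lm:uni}, which gives $\frac1n\lVert\mathcal T^{(\bs\pi,n)}(v)-\mathcal T^{(\bs\pi,n)}(u)\rVert_w\to 0$ and thus controls the growth of $J_n$; combined with the span contraction this pins down a unique average value $\rho=\lim_n J_n(x,\bs\pi^*)/n$ independent of the starting function and of $x$. For uniqueness, suppose $(\rho_1,h_1)$ and $(\rho_2,h_2)$ both solve the equation; applying the seminorm contraction of Lemma \ref{lm:F} to $h_1,h_2$ forces $\lVert h_1-h_2\rVert_{s,1+\beta w_0}=0$, so $h_1-h_2$ is a constant $c$, and then substituting back into the Poisson equation and using translation invariance gives $\rho_1=\rho_2$ and fixes $h$ uniquely in $\mathscr B_w$ (not merely up to a constant) once the equation is imposed. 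The final sentence, reducing the coherent case to Assumptions \ref{asmp:coherent} and \ref{assp:select}, follows immediately from Remark \ref{rm:contraction}, which shows that for coherent $\mathcal R$ Assumptions \ref{asmp:subset} and \ref{assp:upper:envelope} collapse to Assumption \ref{asmp:coherent}.
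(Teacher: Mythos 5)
Your route is essentially the paper's: work with the seminorm contraction of Lemma \ref{lm:F} on the forward invariant set $\mathscr B_w^{(K)}$ (whose invariance under $\mathcal F$ follows, as you say, from Theorem \ref{th:inv_bound} together with the selector proposition), handle the additive-constant ambiguity by passing to the quotient modulo constants, and obtain the fixed point there. The paper does this abstractly --- it introduces the quotient space $\widetilde{\mathscr B}_{\hat w}=\mathscr B_{\hat w}/\!\sim$ with the quotient norm induced by the $\hat w$-seminorm and applies the Banach fixed-point theorem to the induced map $\widetilde{\mathcal F}$ --- whereas you propose to unroll the same argument as a recentered Picard iteration; these are interchangeable, and you even mention the quotient as an alternative. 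One genuine (and harmless) difference is the uniqueness of $\rho$: the paper deduces it from Lemma \ref{lm:uni}, via $\mathcal F^n(h')=n\rho'+h'$, $\mathcal F^n(h)=n\rho+h$ and $\frac1n\lVert \mathcal F^n(h')-\mathcal F^n(h)\rVert_w\to0$, while you apply the contraction of Lemma \ref{lm:F} directly to two solutions, conclude $h_1-h_2$ is constant, and read off $\rho_1=\rho_2$ by translation invariance. Your argument is slightly more direct and equally valid, with the same caveat as the paper's: it only covers competing solutions lying in $\mathscr B_w^{(K)}$, the set on which the contraction is established (the paper makes this restriction explicit).

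There is, however, one incorrect claim at the end: that substituting back into the Poisson equation ``fixes $h$ uniquely in $\mathscr B_w$ (not merely up to a constant).'' This is false, and contradicts translation invariance: if $(\rho,h)$ solves $\rho+h(x)=\inf_{a\in\mathsf A(x)}\bigl(c(x,a)+\mathcal R(h|x,a)\bigr)$, then for every constant $C\in\mathbb R$ the pair $(\rho,h+C)$ solves it as well, since $\mathcal R(h+C|x,a)=\mathcal R(h|x,a)+C$. The contraction argument can only ever identify $h$ in the quotient, i.e.\ up to an additive constant; that, together with uniqueness of $\rho$, is precisely what the paper's proof establishes (and is the honest reading of the theorem's ``uniqueness'' claim). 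The rest of your uniqueness reasoning --- $\lVert h_1-h_2\rVert_{s,1+\beta w_0}=0$, hence $h_1-h_2$ constant, hence $\rho_1=\rho_2$ --- is correct; simply delete the assertion that the constant itself is pinned down.
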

\begin{proof}
Let $\hat w := 1+\beta w_0$ as in Lemma \ref{lm:F}. Then the map $\mathcal F: \mathscr B_w^{(K)} \rightarrow \mathscr B_w^{(K)}$ defined in \ref{eq:F} is a contraction under $\hat w$-seminorm. Note that $\mathscr B_w^{(K)} \subset \mathscr B_{\hat w}$. We now extend the fixed-point theorem w.r.t.\ span-seminorm (cf.\ p.\ 321 \cite{arapostathis1993discrete} for bounded $\hat w$) to $\hat w$-seminorm. Let $\widetilde{\mathscr B}_{\hat w} = \mathscr B_{\hat w} / \sim$ be the quotient space, which is induced by the equivalence relation $\sim$ on $\mathscr B_{\hat w}$ defined by $v \sim u$ if and only if there exists some constant $C \in \mathbb R$ such that $v(x) - u(x) = C$ for all $x \in \mathsf X$, endowed with the quotient norm induced by the $\hat w$-seminorm. For $v \in \mathscr B_{\hat w}$, let $\tilde v$ be the corresponding equivalence class in $\widetilde{\mathscr B}_{\hat w}$ and $\widetilde{\mathcal F}: \widetilde{\mathscr B}_{\hat w} \rightarrow \widetilde{\mathscr B}_{\hat w}$ be the canonically induced map, i.e., $\widetilde{\mathcal F}(\tilde v) := \widetilde{\mathcal F(v)}$, $v \in \mathscr B_{\hat w}$. Since $\mathcal F$ is a contraction w.r.t.\  $\hat w$-seminorm on $\mathscr B_w^{(K)} \subset \mathscr B_{\hat w}$, $\widetilde{\mathcal F}$ is a contraction on $\{ v \in \widetilde{\mathscr B}_{\hat w} \mid \lVert v \rVert_{s,w} \leq K\}$ and therefore has a unique fixed point. Conversely, it follows that the map $\mathcal F$ has a $\hat w$-seminorm fixed point. In other words, there exists $h \in \mathscr B_w^{(K)}$ such that $\lVert \mathcal F(h) - h \rVert_{s,w} = 0$ and $\rho := \mathcal F(h) - h$ is a constant.

Next we show that such $\rho$ is unique. Suppose there exits another solution $(\rho',h') \in \mathbb R \times \mathscr B_w^{(K)}$ to the Poisson equation. Then $\mathcal F^n(h') = n \rho' + h'$ and $\mathcal F^n(h) = n \rho + h$. However, by Lemma \ref{lm:uni}, we have
$$\frac{1}{n} \lVert  n \rho' + h' - n \rho - h \rVert_w =\frac{1}{n}\lVert \mathcal F^n(h') - \mathcal F^n(h) \rVert_w \rightarrow 0, $$  
which implies $\rho' = \rho$. \quad
\end{proof}

\subsection{Solution to average risk-sensitive MCPs}
We show below that the unique real value $\rho$ satisfying the Poisson equation is in fact \emph{optimal} for the average risk-sensitive objective defined in \eqref{eq:avrk}.
\begin{theorem}
Suppose that Assumptions \ref{asmp:subset}, \ref{assp:upper:envelope} and \ref{assp:select} hold. Let $(\rho, h)$ be a solution to the Poisson equation defined in \eqref{eq:pe}. Then $\rho = J^*(x) = J(x,f^\infty), \forall x \in \mathsf X$, where $f$ denotes the optimal selector in the right-hand side of the Poisson equation. In particular, if $\mathcal R$ is coherent, the assertion holds under Assumptions \ref{asmp:coherent} and \ref{assp:select}.
\end{theorem}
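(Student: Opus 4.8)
The plan is to establish the two claimed equalities $\rho = J(x, f^\infty)$ and $\rho = J^*(x)$ separately, exploiting the solution $(\rho, h)$ to the Poisson equation and the uniform bound from Lemma \ref{lm:uni}. First I would fix the optimal selector $f \in \Delta_D$ attaining the infimum on the right-hand side of \eqref{eq:pe}, so that by definition $\rho + h(x) = \mathcal T^f_x(h) = c^f(x) + \mathcal R^f(h|x)$ for all $x \in \mathsf X$. Iterating the stationary operator $\mathcal T^f$ and using translation invariance of $\mathcal T^f$ (which, as noted after \eqref{eq:Tpi}, follows from the corresponding properties of $\mathcal R$), a straightforward induction gives $\mathcal T^{(f^\infty, n)}(h) = n\rho + h$ for every $n$. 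Since $J_n(x, f^\infty) = \mathcal T^{(f^\infty, n)}_x(0)$ by the definition in Section \ref{sec:bounded}, I would then compare $\mathcal T^{(f^\infty,n)}(0)$ with $\mathcal T^{(f^\infty,n)}(h)$ using Lemma \ref{lm:uni}: both $0$ and $h$ lie in the bounded forward invariant set $\mathscr B_w^{(K)}$ (after recalling that the seminorm is unchanged by additive constants, so $0$ qualifies), whence
\begin{align*}
 \frac{1}{n}\lVert \mathcal T^{(f^\infty,n)}(0) - \mathcal T^{(f^\infty,n)}(h) \rVert_w = \frac{1}{n}\lVert J_n(\cdot, f^\infty) - n\rho - h \rVert_w \longrightarrow 0.
\end{align*}
Dividing by $n$ and taking $\limsup$ in \eqref{eq:avrk} kills the bounded terms $h/n$ and the vanishing remainder, yielding $J(x, f^\infty) = \rho$ for every $x$.

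For optimality, i.e.\ $\rho \le J(x, \bs\pi)$ for an arbitrary Markov policy $\bs\pi \in \Pi_M$, I would use that $f$ is a minimizing selector: for every $\pi \in \Delta$ and every $x$, the Poisson equation gives $\rho + h(x) = \mathcal F_x(h) \le \mathcal T^\pi_x(h) = c^\pi(x) + \mathcal R^\pi(h|x)$. Applying monotonicity of the $\mathcal T^{\pi_t}$ operators and composing along the policy $\bs\pi = [\pi_0, \pi_1, \ldots]$, an induction shows $n\rho + h \le \mathcal T^{(\bs\pi, n-1)}(h)$, and then, again by Lemma \ref{lm:uni} comparing $h$ with $0$, one transfers this to $J_n(x, \bs\pi) = \mathcal T^{(\bs\pi,n)}_x(0)$ up to an $o(n)$ error. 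Dividing by $n$ and passing to the $\limsup$ gives $\rho \le J(x, \bs\pi)$. Combined with the achievability from the first step, this shows $\rho = \inf_{\bs\pi} J(x,\bs\pi) = J^*(x) = J(x, f^\infty)$. The coherent case is immediate once one invokes Remark \ref{rm:contraction}, which reduces Assumptions \ref{asmp:subset} and \ref{assp:upper:envelope} to Assumption \ref{asmp:coherent}.

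The main obstacle I anticipate is the bookkeeping in the induction transferring the clean identity/inequality for $h$ to the actual objective $J_n(x,\bs\pi)$ built from the zero function, since the recursive structure of $\mathcal T^{(\bs\pi,n)}$ applies the operators in a fixed nested order and one must carefully track that monotonicity propagates correctly through the nesting without accumulating an error that grows faster than $o(n)$. Lemma \ref{lm:uni} is precisely the tool that controls this: it guarantees the $w$-norm discrepancy between the iterates started from $0$ and from $h$ grows sublinearly (in fact stays uniformly bounded by $K'/(1-\gamma)$), so after division by $n$ the difference vanishes. The only subtlety beyond routine estimation is ensuring that the $\limsup$ in \eqref{eq:avrk} interacts correctly with the pointwise-in-$x$ bounds — but since the error bounds from Lemma \ref{lm:uni} are in $w$-norm and hence uniform over bounded-$w_0$ regions, evaluating at a fixed $x$ poses no difficulty.
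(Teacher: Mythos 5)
Your proposal is correct and follows essentially the same route as the paper's own proof: the induction $\mathcal T^{(f^\infty,n)}(h) = n\rho + h$ combined with Lemma \ref{lm:uni} to identify $J(x,f^\infty)=\rho$, then iterating the inequality $\rho + h \leq \mathcal T^\pi(h)$ via monotonicity and again invoking Lemma \ref{lm:uni} to conclude $\rho \leq J(x,\bs\pi)$ for every Markov policy, with the coherent case handled by Remark \ref{rm:contraction}. The off-by-one bookkeeping between $\mathcal T^{(\bs\pi,n)}$ (which composes $n+1$ operators) and the $n$-fold iterate you worry about is immaterial after dividing by $n$, exactly as the paper implicitly treats it.
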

\begin{proof}
 Let $\mathcal T^\pi$ be the operator defined in \eqref{eq:Tpi}. By its definition, $\mathcal F(h) = \mathcal T^f(h)$ and the induction
$
 (\mathcal T^f)^n (h) =  (\mathcal T^f)^{n-1} (\rho + h) = n \rho + h, n= 1,2,\ldots, 
$
yields $\dfrac{1}{n} \lVert (\mathcal T^f)^n (h) - \rho \rVert_w \rightarrow 0$ as $n \rightarrow \infty$. 
On the other hand, for any $v \in \mathscr B_w^{(K)}$, by Lemma \ref{lm:uni}, $\dfrac{1}{n}\lVert (\mathcal T^f)^n (v) - (\mathcal T^f)^n (h) \rVert_w \rightarrow 0$ implies that $$J(x,f^\infty) = \lim_{n \rightarrow \infty} \frac{1}{n} (\mathcal T^f)^n_x(0) = \rho, \forall x \in \mathsf X.$$

Next we prove that $\rho \leq J(x,\bs \pi)$ for all $\bs \pi \in \Pi_M = \Delta^\infty$ and $x \in \mathsf X$. In fact,  let $\bs \pi = [\pi_0,\pi_1, \ldots]$ be an arbitrary Markov policy. By definition $h \leq \mathcal T^\pi(h) - \rho, \forall \pi \in \Delta$. Iterating this inequality yields $h \leq \mathcal T^{\pi_0}( \mathcal T^{\pi_1}( \cdots \mathcal T^{\pi_{n-1}}(h)\cdots)) - n \rho,$ and hence
\begin{align}
 0 \leq \limsup_{n \rightarrow \infty} \frac{1}{n} \mathcal T^{\pi_0}(\mathcal T^{\pi_1} (\cdots\mathcal T^{\pi_{n-1}}(h)\cdots)) - \rho. \label{eq:rhoh}
\end{align}
Note that by definition $J(x, \bs \pi) = \limsup_{n \rightarrow \infty} \frac{1}{n} \mathcal T^{\pi_0}_x(\mathcal T^{\pi_1} (\cdots \mathcal T^{\pi_{n-1}}(0)\cdots))$. Lemma \ref{lm:uni} yields $J(x, \bs \pi) = \limsup_{n \rightarrow \infty} \frac{1}{n} \mathcal T^{\pi_0}_x(\mathcal T^{\pi_1} (\cdots\mathcal T^{\pi_{n-1}}(h)\cdots)).$ Hence, \eqref{eq:rhoh} implies 
$$\rho \leq \inf_{\bs \pi \in \Pi_M} J(\bs \pi) = J^*.$$
Since $f^\infty$ is a valid Markov policy in $\Pi_M$, $\rho = J^*(x) = J(x, f^\infty), \forall x \in \mathsf X$.
\quad 
\end{proof}

\paragraph*{Value iteration} We can use the following iterative procedure to calculate the optimal average risk: start from any $v_0 \in \mathscr B_w^{(K)}$ and iterate 
\begin{align*}
 v_{n+1} = \mathcal F(v_n) \textrm{ with } f_{n} \textrm{ being its selector }, n = 0, 1, 2, \ldots. 
\end{align*}
Lemma \ref{lm:F} and Theorem \ref{th:pe} ensure that $v_n \rightarrow \rho$ and $f_n \rightarrow f$, as $n \rightarrow \infty$, where $\rho = J^*$ is the unique solution to the Poisson equation and $f$ the optimal policy. In particular, this iteration will geometrically converge with rate $\bar\alpha$ as in Lemma \ref{lm:F}.

\section{Entropic maps}\label{sec:entropic}
In this section, we investigate the sufficient conditions particularly for the entropic map, which is the most widely used risk measure in the last four decades. Recall that in Example \ref{ex:entropic}, the entropic map is defined as
\begin{align*}
 \mathcal R_{x,a}(v) := \dfrac{1}{\lambda}\ln \left( Q_{x,a} [e^{\lambda v}] \right) = \dfrac{1}{\lambda} \ln \left\lbrace \int_{\mathsf X} Q(\diff y|x,a) e^{ \lambda v(y)} \right\rbrace.
\end{align*}
We consider in this section mainly the case $\lambda > 0$ and hence $\mathcal R$ is convex, which induces risk-averse behavior, though the results stated below can be easily extended to negative $\lambda$. From now on, without loss of generality, we set $\lambda = 1$.

\subsection{Upper envelope}
We first derive its upper envelope. 
\begin{proposition}\label{prop:ent}
 Let $\mu$ be a probability measure on $(\mathsf X, \mathcal B(\mathsf X))$ and $\nu(v) := \ln \left( \mu \left[e^v \right] \right)$. Suppose $\mu[e^{v}] < \infty$ holds for all $v \in \mathscr B_w$. Then 
\begin{itemize}
 \item[(i)]  $\nu(v) \leq \frac{\mu[e^v v]}{\mu[e^v]}$, and
 \item[(ii)] $\nu(v) - \nu(u) \leq \sup_{f \in \mathscr B_w^{(K)}} \frac{\mu[ e^{f} (v-u)]}{\mu[ e^{f} ]}$, $\forall v, u \in \mathscr B_w^{(K)}$.
\end{itemize}
\end{proposition}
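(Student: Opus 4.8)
The plan is to prove both inequalities using the convexity of the exponential map, translated into statements about the logarithmic moment generating function $\nu(v) = \ln(\mu[e^v])$. For part (i), I would start from the elementary inequality $\ln t \leq t - 1$ for $t > 0$, or equivalently $\ln(a/b) \leq a/b - 1$. Applying this with a well-chosen ratio should produce the claimed bound, but the cleaner route is to recognize that $\nu$ is a convex functional and that its bound by the \emph{tilted} expectation is a reflection of the fact that the subgradient (Gibbs) measure $\diff\mu_v := \frac{e^v}{\mu[e^v]}\diff\mu$ supports the graph of $\nu$ from below. Concretely, I would apply Jensen's inequality to the concave function $\ln$ with respect to the tilted probability measure $\mu_v$: since $\mu_v$ is a probability measure, $\nu(v) = \ln(\mu[e^v])$ can be rewritten so that $\ln(\mu[e^v]) - \mu_v[v] \leq 0$ follows from $\mu_v[\ln(e^{v}/\mu[e^v])] \leq \ln(\mu_v[e^{v}/\mu[e^v]])$. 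Unwinding this Jensen step gives exactly $\nu(v) \leq \mu_v[v] = \frac{\mu[e^v v]}{\mu[e^v]}$, which is (i).

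For part (ii), the key is the convexity of $\nu$ together with the variational (Gibbs/Donsker--Varadhan type) structure. The cleanest approach is to use the subgradient inequality for the convex functional $\nu$: for any $v, u$, convexity gives $\nu(v) - \nu(u) \leq \langle \nabla\nu(v), v - u\rangle$, and the Gateaux derivative of $\nu$ at $v$ in direction $h$ is precisely $\frac{\mu[e^v h]}{\mu[e^v]}$, the expectation of $h$ under the tilted measure $\mu_v$. Thus $\nu(v) - \nu(u) \leq \frac{\mu[e^v(v-u)]}{\mu[e^v]}$. Since $v \in \mathscr B_w^{(K)}$, taking the supremum over all $f \in \mathscr B_w^{(K)}$ of $\frac{\mu[e^f(v-u)]}{\mu[e^f]}$ dominates the choice $f = v$, which yields (ii) directly. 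Alternatively, and perhaps more elementarily to avoid justifying differentiability, I would write $\nu(v) - \nu(u) = \int_0^1 \frac{\diff}{\diff t}\nu(u + t(v-u))\,\diff t = \int_0^1 \frac{\mu[e^{u+t(v-u)}(v-u)]}{\mu[e^{u+t(v-u)}]}\,\diff t$ and bound each integrand by the supremum over the tilting function $f = u + t(v-u)$, noting that each such $f$ lies in $\mathscr B_w^{(K)}$ by convexity of the set $\mathscr B_w^{(K)}$ (it is defined by a seminorm bound, hence convex).

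The main obstacle I anticipate is the analytic justification of differentiating under the integral sign, i.e., that $\nu$ is Gateaux differentiable with the stated derivative and that the derivative is continuous in $t$. This requires the finiteness hypothesis $\mu[e^v] < \infty$ for all $v \in \mathscr B_w$ to be leveraged carefully, so that dominated convergence applies to the difference quotients $\frac{e^{v + sh} - e^v}{s}$ uniformly near $s = 0$; exponential integrability of nearby tilts gives the needed domination. A secondary point requiring care is confirming that the intermediate functions $f = u + t(v-u)$ genuinely belong to $\mathscr B_w^{(K)}$, which follows because $\lVert \cdot\rVert_{s,w}$ is a seminorm and $\mathscr B_w^{(K)}$ is the sublevel set $\{\lVert v\rVert_{s,w} \leq K\}$, hence convex. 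Apart from these technical verifications, both inequalities are consequences of convexity of the exponential and the integral-representation of the increments of $\nu$.
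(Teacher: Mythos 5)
Your argument for (ii) is essentially correct, but the Jensen step you display for (i) does not prove (i). Writing $\diff\mu_v := \frac{e^v}{\mu[e^v]}\,\diff\mu$, the inequality you invoke, $\mu_v[\ln(e^v/\mu[e^v])] \leq \ln(\mu_v[e^v/\mu[e^v]])$, has left-hand side $\mu_v[v]-\nu(v)$ but right-hand side $\ln\mu[e^{2v}]-2\nu(v)$, so it yields $\mu_v[v]+\nu(v)\leq\ln\mu[e^{2v}]$, not $\nu(v)\leq\mu_v[v]$. The one-line repair is to apply Jensen to the \emph{reciprocal} density $\mu[e^v]e^{-v}$ (that of $\mu$ with respect to $\mu_v$): since $\mu_v\bigl[\mu[e^v]e^{-v}\bigr]=\mu[1]=1$, concavity of $\ln$ gives $\nu(v)-\mu_v[v]=\mu_v\bigl[\ln(\mu[e^v]e^{-v})\bigr]\leq\ln 1=0$. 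Equivalently, (i) is the statement $\mu_v[v]-\nu(v)=D(\mu_v\,\|\,\mu)\geq 0$, the nonnegativity of relative entropy, which is exactly how the paper concludes (i) after inserting the optimal tilt $\xi^*=e^v/\mu[e^v]$ into its dual representation. Note also that (i) is the case $u=0$ of the inequality $\nu(v)-\nu(u)\leq\mu[e^v(v-u)]/\mu[e^v]$ that you establish for (ii), so your proposal does contain a correct proof of (i), just not the one you present under that heading.

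For (ii) your route works but is heavier than necessary. Your central inequality $\nu(v)-\nu(u)\leq\mu[e^v(v-u)]/\mu[e^v]$ is precisely the paper's Jensen inequality \eqref{eq:ent:jen} with the roles of $u$ and $v$ interchanged: $\nu(u)-\nu(v)=\ln\mu_v[e^{u-v}]\geq\mu_v[u-v]$, one line and no differentiability. The paper then packages this Jensen bound into the Gibbs/Donsker--Varadhan representation $\nu(v)=\sup_\xi\{\mu[\xi v]-\mu[\xi\ln\xi]\}$ over tilts $\xi=e^f/\mu[e^f]$, $f\in\mathscr B_w^{(K)}$, and obtains (ii) as a difference of suprema. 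Your convex-analytic alternative (gradient inequality, or the line integral of $t\mapsto\nu(u+t(v-u))$) reaches the same bound but must justify Gateaux differentiability by dominated convergence; your hypotheses do support this (every element of $\mathscr B_w$ is exponentially $\mu$-integrable, and $\mathscr B_w$ is a vector space containing $\lvert h\rvert$ whenever it contains $h$), and your observation that $\mathscr B_w^{(K)}$ is convex, being a seminorm ball, is correct and is what keeps the path tilts $u+t(v-u)$ admissible in the supremum. So once the (i) step is repaired the proposal is sound, but the differentiability machinery buys nothing over the direct Jensen argument that both the paper and your own alternative ultimately rest on.
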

\begin{proof}
 Given any two $u,v \in \mathscr B_w$, we obtain
 \begin{align}
  \nu(v) - \nu(u) = \ln \frac{\mu[e^{v}]}{\mu[e^u]} = \ln \frac{\mu[e^u e^{v-u}]}{\mu[e^u]} \geq \frac{\mu[e^u(v-u)]}{\mu[e^u]},  \label{eq:ent:jen}
 \end{align}
where the last inequality is due to Jensen's inequality. Hence, 
\begin{align*}
 \ln(\mu [e^v]) \geq \frac{\mu [e^u v]}{\mu [e^u]} - \mu\left[\frac{e^u}{\mu[e^u]}(u - \ln (\mu [e^u]))\right], \forall u, v \in \mathscr B_w.
\end{align*}
Restricting $u$ and $v$ to be in the subset $\mathscr B_w^{(K)}$, the above inequality yields
\begin{align*}
 \ln(\mu [e^v]) \geq \sup_{\xi = \frac{e^u}{\mu[e^u]}, u \in \mathscr B_w^{(K)}} \mu[\xi v] - \mu[\xi \ln(\xi)].
\end{align*}
Since the equality holds by taking $\xi^* := \frac{e^v}{\mu[e^v]}$, we obtain
\begin{align}
 \ln(\mu [e^v]) = \sup_{\xi = \frac{e^u}{\mu[e^u]}, u \in \mathscr B_w^{(K)}} \mu[\xi v] - \mu[\xi \ln(\xi)]. \label{eq:ent:dual}
\end{align}
The second term $\mu[\xi \ln(\xi)]$ on the right-hand side of the above equation is the \emph{relative entropy} and is always nonnegative (for proof see, e.g., \cite[Section 5.1]{ledoux2001concentration}). Hence, we obtain (i). Finally, (ii) follows from
\begin{align*}
 \ln(\mu [e^v]) - \ln(\mu [e^u]) \leq & \sup_{\xi = \frac{e^{f}}{\mu[e^{f}]}, f \in  \mathscr B_w^{(K)}} \mu[\xi (v - u)] = \sup_{f \in  \mathscr B_w^{(K)}} \frac{\mu[ e^{f} (v-u)] }{\mu [e^{f}]}. \quad
\end{align*}
\end{proof}

\begin{remark}\rm 
  The inequality in \eqref{eq:ent:dual} is similar to the dual representation of convex risk measures on $L^\infty$ \cite{follmer2002convex, follmer2004stochastic} or on more general spaces such as \emph{Orlicz hearts} \cite{cheridito2009risk}. However, since we consider a different function space, i.e., the weighted norm space, the existing result cannot be directly applied here. On the other hand, for other types of convex valuation functions, their dual representation provide us a generic approach to calculate their upper envelopes, as shown in the above proposition.
\end{remark}

By Proposition \ref{prop:ent}, we obtain one upper envelope for the entropic map:
\begin{align}
 \bar{\mathcal R}_{x,a}^{(w,K)}(u) = \sup_{f \in \mathscr B_w^{(K)}} \frac{Q_{x,a} [e^{f} u]}{Q_{x,a}[e^{f}]}, \label{eq:ent:upenv}
\end{align}
provided that $Q_{x,a} [e^{f}] < \infty$ holds for all $f \in \mathscr B_w$ and $(x,a) \in \mathsf K$. We show below how this condition is satisfied. 

\begin{assumption}\label{ass:lya} \rm
There exist a $\mathcal B(\mathsf X)$-measurable function $w_1: \mathsf X \rightarrow [1,\infty)$, positive constants $\gamma_1 \in (0,1)$ and $K_1 > 0$ such that $\mathcal R_{x,a}(w_1) \leq \gamma_1 w_1(x) + K_1$.
\end{assumption}

If the above assumption holds and setting $w_0 := w_1^p$ with any $p \in (0,1)$, then for all $f \in \mathscr B_{w_0} \subseteq \mathscr B_{w}$, there exists a constant $K_f$ (depending on $p$ and $\lVert f \rVert_{w_0}$) satisfying $\lvert f(x) \rvert \leq \lVert f \rVert_{w_0} w_0(x) \leq w_1(x) + K_f, \forall x \in \mathsf X.$ We immediately have $$Q_{x,a}[e^f] \leq Q_{x,a}[e^{w_1+K_f}] \leq e^{K_f} e^{\gamma_1 w_1 + K_1} < \infty, \forall (x,a) \in \mathsf K$$ and therefore, the upper envelope for the entropic map in \eqref{eq:ent:upenv} is well-defined. In the following theorem, we show that $w_0 = w^p_1$ with any $p \in (0,1)$ satisfies \ue{1} for the upper envelope of the entropic map with some constants $\gamma_2 \in (0,1)$ and $K_2 > 0$.

\begin{theorem}\label{th:upperenv}
Suppose that Assumption \ref{ass:lya} holds. Let $w_0 := w^p_1$ with $p \in (0,1)$. Then, for any constant $K > 0$, there exist constants $\gamma_2 \in (0,1)$ (depending only on $p$ and $\gamma_1$) and $K_2 > 0$ (depending on $p$, $K$, $\lambda_1$ and $K_1$) such that
\begin{align*}
\sup_{f \in \mathscr B_{w_0}^{(K)}} \frac{Q_{x,a}[e^f w_0]}{Q_{x,a}[e^f]} \leq \gamma_2 w_0(x) + K_2, \forall (x,a) \in \mathsf K.
\end{align*}
\end{theorem}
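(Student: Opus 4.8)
The plan is to exploit two structural facts: the ratio $Q_{x,a}[e^f w_0]/Q_{x,a}[e^f]$ is invariant under $f \mapsto f + c$, and $w_0 = w_1^p$ with $p \in (0,1)$ is a genuine power of the Lyapunov function $w_1$, for which Assumption \ref{ass:lya} supplies the sharp exponential moment bound $Q_{x,a}[e^{w_1}] \le e^{\gamma_1 w_1(x) + K_1}$. First I would reduce the supremum to representatives with controlled sup-norm: for $f \in \mathscr B_{w_0}^{(K)}$, Lemma \ref{lm:semi} lets me pick a constant shift so that $|f| \le K + w_0$, and since the ratio is unchanged by such a shift it suffices to prove the bound uniformly over all $f$ with $|f| \le K + w_0$. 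The key algebraic step is H\"older's inequality with exponents $1/p$ and $1/(1-p)$ applied to the factorization $e^f w_1^p = (e^f w_1)^p (e^f)^{1-p}$, which yields $Q_{x,a}[e^f w_0] \le (Q_{x,a}[e^f w_1])^p (Q_{x,a}[e^f])^{1-p}$ and hence $\frac{Q_{x,a}[e^f w_0]}{Q_{x,a}[e^f]} \le \left(\frac{Q_{x,a}[e^f w_1]}{Q_{x,a}[e^f]}\right)^p$. This reduces the problem to bounding the \emph{linear} tilted moment $m(x) := Q_{x,a}[e^f w_1]/Q_{x,a}[e^f]$ and then raising it to the power $p$; crucially this preserves the exponent $p$, so a linear-in-$w_1$ bound for $m$ will translate into exactly the desired $w_0 = w_1^p$ growth.

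Next I would bound $m(x)$. Writing $\tilde Q$ for the tilted measure $\diff\tilde Q \propto e^f \diff Q_{x,a}$, Jensen's inequality gives $m(x) = \tilde Q[w_1] \le \frac{1}{s}\ln \tilde Q[e^{s w_1}] = \frac{1}{s}\bigl(\ln Q_{x,a}[e^{f + s w_1}] - \ln Q_{x,a}[e^f]\bigr)$ for any $s \in (0,1)$. For the numerator I use $f \le K + w_1^p \le K + \epsilon w_1 + C_\epsilon$ (since $t^p \le \epsilon t + C_\epsilon$ for $t \ge 0$) so that, with $\theta := s + \epsilon \le 1$, the concavity of $t \mapsto t^\theta$ together with Assumption \ref{ass:lya} gives $Q_{x,a}[e^{\theta w_1}] \le (Q_{x,a}[e^{w_1}])^\theta \le e^{\theta(\gamma_1 w_1(x) + K_1)}$, whence $\ln Q_{x,a}[e^{f+sw_1}] \le K + C_\epsilon + \theta\gamma_1 w_1(x) + \theta K_1$. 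For the denominator I bound from below by $Q_{x,a}[e^f] \ge e^{Q_{x,a}[f]}$ and estimate $Q_{x,a}[f] \ge -K - Q_{x,a}[w_1^p] \ge -K - (\gamma_1 w_1(x)+K_1)^p \ge -K - \gamma_1^p w_0(x) - K_1^p$, using $Q_{x,a}[w_1] \le \ln Q_{x,a}[e^{w_1}]$ and the subadditivity $(a+b)^p \le a^p + b^p$. Combining the two estimates leaves a coefficient of $w_1(x)$ equal to $\theta\gamma_1/s$ plus a lower-order $w_0$-contribution; absorbing $w_0 = w_1^p \le \eta w_1 + C_\eta$ and then letting $s \to 1$, $\epsilon,\eta \to 0$, I obtain $m(x) \le \gamma_1'' w_1(x) + C$ for any fixed $\gamma_1'' \in (\gamma_1, 1)$, say $\gamma_1'' = (1+\gamma_1)/2$, with $C$ depending on $p, K, \gamma_1, K_1$ but not on $f$.

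Finally, feeding this into the H\"older reduction and using $(a+b)^p \le a^p + b^p$ once more gives $\frac{Q_{x,a}[e^f w_0]}{Q_{x,a}[e^f]} \le m(x)^p \le (\gamma_1'')^p w_1(x)^p + C^p = \gamma_2 w_0(x) + K_2$, with $\gamma_2 := (\gamma_1'')^p \in (0,1)$ depending only on $p$ and $\gamma_1$, and $K_2 := C^p$. Since the bound is uniform in $f$, I may take the supremum over $f \in \mathscr B_{w_0}^{(K)}$, which establishes the claim. I expect the main obstacle to be the bound on the linear tilted moment $m(x)$: because the tilt $e^f$ can push mass toward the region where $w_1$ is large, estimating numerator and denominator crudely and separately blows up exponentially in $w_1(x)$, and the two devices that rescue the argument — the H\"older decomposition that keeps the power $p$, and the Jensen exponential-moment trick with $\theta = s + \epsilon \le 1$ that keeps the linear coefficient strictly below one — are precisely what make the sublinear growth rate $\gamma_2 w_0 + K_2$ come out.
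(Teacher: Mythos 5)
Your proof is correct, but it takes a genuinely different route from the paper's. The paper argues by localization: from Assumption \ref{ass:lya} it extracts a pure drift inequality $\mathcal R_{x,a}(w_1)\le\lambda w_1(x)$ outside a level set $\mathsf B_{w_1}(A)$, rewrites it as $\int Q_{x,a}(\diff y)\,(e^{w_1(y)-\lambda w_1(x)}-1)\le 0$, and then uses two pointwise comparison lemmas (Lemmas \ref{lm:en:1b} and \ref{lm:en:2b}) to dominate $e^{f(y)}(w_0(y)-\gamma_2 w_0(x))$ by the signed quantity $e^{w_1(y)-\lambda w_1(x)}-1$ separately on the regions where $w_0(y)>\gamma_2 w_0(x)$ and $w_0(y)\le\gamma_2 w_0(x)$; this yields the multiplicative bound $Q_{x,a}[e^f w_0]\le\gamma_2 w_0(x)\,Q_{x,a}[e^f]$ (with no additive constant) for $x$ outside a large level set, and a crude bound supplies $K_2$ on the level set itself. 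You instead argue globally: H\"older with exponents $1/p$ and $1/(1-p)$ gives $\frac{Q_{x,a}[e^f w_0]}{Q_{x,a}[e^f]}\le\bigl(\frac{Q_{x,a}[e^f w_1]}{Q_{x,a}[e^f]}\bigr)^p$, and the linear tilted moment is then controlled by tilted-measure Jensen together with separate bounds on $\ln Q_{x,a}[e^{f+sw_1}]$ (coefficient $(s+\epsilon)\gamma_1$ via the concavity trick $Q_{x,a}[e^{\theta w_1}]\le(Q_{x,a}[e^{w_1}])^\theta$) and on $-\ln Q_{x,a}[e^f]$, whose cost is only $O(w_0)=o(w_1)$ --- this subexponential denominator loss is exactly what prevents the blow-up you anticipated, and is the decisive point of your argument. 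Both routes produce $\gamma_2$ arbitrarily close to $\gamma_1^p$ and constants with the stated dependencies, and since Theorem \ref{th:local1} and Corollary \ref{cor:lya} use only the statement of the theorem, your proof is a drop-in replacement. What each buys: yours is shorter, dispenses with the level-set surgery and the two technical lemmas, and the H\"older interpolation is reusable for other powers of a Lyapunov function; the paper's yields the slightly stronger intermediate fact of a purely multiplicative drift off a level set, in the spirit of the Hairer--Mattingly technique used throughout the paper. Two cosmetic points: the limits $s\to1$, $\epsilon\to0$ must be coupled (e.g.\ $s=1-\epsilon$) so that $\theta=s+\epsilon\le1$ throughout, and the step $Q_{x,a}[w_1^p]\le(Q_{x,a}[w_1])^p$ deserves an explicit mention of Jensen; neither affects correctness.
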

Since the proof is rather technical, we postpone it to the Appendix.  

Note that by the definition of upper envelope, $\mathcal R_{x, a}(w_0) \leq \sup_{f \in \mathscr B_{w_0}^{(K)}} \frac{Q_{x,a}[e^f w_0]}{Q_{x,a}[e^f]}$. Hence, the above theorem implies immediately the following corollary.
\begin{corollary}\label{cor:lya}
  Suppose that Assumption \ref{ass:lya} holds. Then, for any $p \in (0,1)$, $w_0 := w^p_1$, there exist constants $\hat \gamma_0 \in (0,1)$ (depending on $p$ and $\gamma_1$) and $\hat K_0$ (depending on $p$, $\gamma_1$ and $K_1$) satisfying $\mathcal R_{x, a}(w_0) \leq \hat \gamma_0 w_0(x) + \hat K_0, \forall (x,a) \in \mathsf K.$
\end{corollary}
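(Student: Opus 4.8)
The plan is to read this corollary off almost immediately from Theorem~\ref{th:upperenv}, using Proposition~\ref{prop:ent}(i) to pass from the entropic map to the ratio controlled there. First I would record that, under Assumption~\ref{ass:lya} with $w_0 = w_1^p$ and $p \in (0,1)$, the integrability $Q_{x,a}[e^{w_0}] < \infty$ holds for every $(x,a) \in \mathsf K$; this is exactly the estimate established in the paragraph preceding Theorem~\ref{th:upperenv} (dominate $w_0$ by $w_1 + K_f$ and apply Assumption~\ref{ass:lya}). Hence Proposition~\ref{prop:ent}(i), applied with $\mu = Q_{x,a}$ and $v = w_0$, gives
\begin{align*}
 \mathcal R_{x,a}(w_0) = \ln\left( Q_{x,a}[e^{w_0}] \right) \leq \frac{Q_{x,a}[e^{w_0} w_0]}{Q_{x,a}[e^{w_0}]}, \quad \forall (x,a) \in \mathsf K.
\end{align*}

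Second, I would check that $w_0$ is itself admissible as a test function in the supremum appearing in Theorem~\ref{th:upperenv}, i.e.\ that $w_0 \in \mathscr B_{w_0}^{(K)}$. Since $w_0 \geq 0$ and $w = 1 + K^{-1} w_0$, we have $\lvert w_0(x) - w_0(y)\rvert \leq w_0(x) + w_0(y) \leq K\,(w(x) + w(y))$ for all $x \neq y$, so $\lVert w_0 \rVert_{s,w} \leq K$. Consequently the right-hand side of the display above is dominated by $\sup_{f \in \mathscr B_{w_0}^{(K)}} Q_{x,a}[e^f w_0]/Q_{x,a}[e^f]$, which is precisely the quantity bounded in Theorem~\ref{th:upperenv}.

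Finally, invoking Theorem~\ref{th:upperenv} for the given $p$ produces constants $\gamma_2 \in (0,1)$ and $K_2 > 0$ with $\sup_{f \in \mathscr B_{w_0}^{(K)}} Q_{x,a}[e^f w_0]/Q_{x,a}[e^f] \leq \gamma_2 w_0(x) + K_2$; setting $\hat\gamma_0 := \gamma_2$ and $\hat K_0 := K_2$ and chaining the three estimates yields the claim, with the asserted dependence of the constants on $p$, $\gamma_1$, $K_1$. The genuine work is entirely inside Theorem~\ref{th:upperenv} (whose proof is deferred to the Appendix); for the corollary itself there is no real obstacle, the only non-cosmetic point being the admissibility check $w_0 \in \mathscr B_{w_0}^{(K)}$, which is immediate from the nonnegativity of $w_0$.
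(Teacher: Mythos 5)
Your proof is correct and takes essentially the same route as the paper: the paper's (one-line) argument bounds $\mathcal R_{x,a}(w_0)$ by $\sup_{f \in \mathscr B_{w_0}^{(K)}} Q_{x,a}[e^f w_0]/Q_{x,a}[e^f]$ via the upper-envelope property from Proposition \ref{prop:ent} and then invokes Theorem \ref{th:upperenv}, while you reach the identical quantity via Proposition \ref{prop:ent}(i) together with the admissibility of the test function $f = w_0$ --- a cosmetic difference. Your explicit check that $\lVert w_0 \rVert_{s,w} \leq K$ (and the integrability of $e^{w_0}$) only makes precise what the paper leaves implicit.
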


In summary, if Assumption \ref{ass:lya} holds, then
\begin{enumerate}
 \item by Corollary \ref{cor:lya}, Assumption \lya{1a'} stated in Section \ref{sec:ave:criterion} holds with $w_0$ and constants $\hat \gamma_0$ and $\hat K_0$, and if in addition, the cost function $c$ satisfies $\lvert c\rvert \leq \tilde \gamma_0 w_0 + C_0$ with some $\tilde \gamma_0 \in (0,1-\gamma_0)$ and $C_0 > 0$, then \lya{1} holds;
 \item by Theorem \ref{th:upperenv}, Assumption \ue{1} stated in Section \ref{sec:geo:cont} holds with the same $w_0$ and constants $\gamma_2$ and $K_2$.
\end{enumerate}

\subsection{Doeblin-type conditions}
We introduce the following notation of \emph{level-sets}. For any unbounded nonnegative $\mathcal B(\mathsf X)$-measurable function $w$ and any real number $R \in \mathbb R$, we define $\mathsf B_w(R) :=  \left\{ x \in \mathsf X | w(x) \leq R \right\}$ 
and $\mathsf B_w^c(R)$ its complementary set. We investigate in this section the properties of the entropic map restricted to bounded level-sets. We first introduce the \emph{local Doeblin condition} (see \cite{douc2009forgetting} and references therein) as follows.
\begin{assumption}\label{ass:local}
Let $w_0: \mathsf X \rightarrow [0,\infty)$ be a $\mathcal B(\mathsf X)$-measurable function. For any level-set $\mathsf C := \mathsf B_{w_0}(R)$, $R > 0$, there exist a measure $\mu_{\mathsf C}$ and constants $\lambda_{\mathsf C}^+, \lambda_{\mathsf C}^- > 0$ such that $\mu_{\mathsf C}(\mathsf C) > 0$ and
\begin{align*}
\lambda_{\mathsf C}^- \mu_{\mathsf C}(\mathsf D \cap \mathsf C) \leq Q_{x, a}(\mathsf D \cap \mathsf C) \leq \lambda_{\mathsf C}^+\mu_{\mathsf C}(\mathsf D \cap \mathsf C), \forall x \in \mathsf C, a \in \mathsf A(x), \forall \mathsf D \in  \mathcal B(\mathsf X).
\end{align*}
\end{assumption}

\begin{remark} \label{rm:doeblin}\rm
 This assumption is stronger than the standard Doeblin condition (see Assumption \ref{assp:standard}(ii)). In fact, it is easy to verify that the following two conditions are equivalent:
 \begin{itemize}
\item[(i)] There exist a measure $\mu_{\mathsf C}$ and a constant $\lambda_{\mathsf C}^- > 0$ such that $\mu_{\mathsf C}(\mathsf C) > 0$ and
\begin{align}
  Q_{x,a}(\mathsf D \cap \mathsf C) \geq \lambda_{\mathsf C}^- \mu_{\mathsf C}(\mathsf D \cap \mathsf C), \forall x \in \mathsf C, a \in \mathsf A(x),  \mathsf D \in  \mathcal B(\mathsf X). \label{eq:lowersupport}
\end{align}
\item[(ii)] There exist a probability measure $\mu$ and a constant $\alpha> 0$ such that
  \begin{align}
   Q_{x,a}(\mathsf D) \geq \alpha \mu(\mathsf D),  \forall x \in \mathsf C, a \in \mathsf A(x), \mathsf D \in  \mathcal B(\mathsf X). \label{eq:deoblin_condition}
  \end{align}
\end{itemize}
\end{remark}

\begin{theorem}\label{th:local1}
Suppose Assumption \ref{ass:local} and Assumption \ref{ass:lya} hold. Let $w_1$ be the weight function as in Assumption \ref{ass:lya} and $\mathsf{B} = \mathsf B_{w_0}(R_0)$ be a bounded level-set w.r.t.\ some $R_0 > 0$, where $w_0 := w^p_1$, $p \in (0,1)$. Then for any positive constant $K_0 >0$, there exists a positive constant $K > K_0$ such that
$$\mathcal R_{x,a}(v) - \mathcal R_{y,b}(v) \leq 2 (K - K_0) + \mathcal R_{x,a} (w_0) - \mathcal R_{y,b}(-w_0)$$
holds for all $ x,y \in \mathsf B, a \in \mathsf A(x), b \in \mathsf A(y)$ and $v\in \mathscr B_{1+w_0}$ satisfying $\lvert v \rvert \leq w_0 + K$.
\end{theorem}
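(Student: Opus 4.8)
The plan is to work with the entropic map in its exponential form (taking $\lambda=1$), $\mathcal R_{x,a}(v)=\ln Q_{x,a}[e^{v}]$, and to first reduce the two-sided target inequality to a one-sided one. Since $e^{w_0}\ge 1\ge e^{-w_0}$ pointwise, monotonicity gives $\mathcal R_{x,a}(w_0)\ge 0\ge\mathcal R_{y,b}(-w_0)$, so the right-hand side of the claim is bounded below by $2(K-K_0)$. Hence it suffices to establish the single estimate $\mathcal R_{x,a}(v)-\mathcal R_{y,b}(v)\le 2(K-K_0)$, equivalently $Q_{x,a}[e^{v}]\le e^{-2K_0}e^{2K}\,Q_{y,b}[e^{v}]$, for all admissible $v$ with $\lvert v\rvert\le w_0+K$ and all $x,y\in\mathsf B$.

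Second, I would introduce an auxiliary level set $\mathsf C:=\mathsf B_{w_0}(R)$ with a radius $R\ge R_0$ to be fixed only at the end, so that $\mathsf B\subseteq\mathsf C$ and the local Doeblin bounds of Assumption \ref{ass:local} apply to both $Q_{x,a}$ and $Q_{y,b}$. Splitting $Q_{x,a}[e^{v}]=\int_{\mathsf C}e^{v}\,dQ_{x,a}+\int_{\mathsf C^{c}}e^{v}\,dQ_{x,a}$, the two-sided minorization on $\mathsf C$ controls the first term against $Q_{y,b}$: $\int_{\mathsf C}e^{v}\,dQ_{x,a}\le\lambda_{\mathsf C}^{+}\int_{\mathsf C}e^{v}\,d\mu_{\mathsf C}\le\kappa\int_{\mathsf C}e^{v}\,dQ_{y,b}\le\kappa\,Q_{y,b}[e^{v}]$, where $\kappa:=\lambda_{\mathsf C}^{+}/\lambda_{\mathsf C}^{-}$. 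For the tail I would use $v\le w_0+K$ together with the decisive role of $p\in(0,1)$: writing $e^{w_0}=e^{w_1}e^{w_1^{p}-w_1}$ and noting that $t\mapsto t^{p}-t$ is strictly decreasing on $[1,\infty)$, one has $w_1^{p}-w_1\le R-R^{1/p}$ on $\mathsf C^{c}=\{w_1>R^{1/p}\}$, so Assumption \ref{ass:lya} (i.e.\ $Q_{x,a}[e^{w_1}]\le e^{\gamma_1w_1(x)+K_1}$) yields the uniform bound $\int_{\mathsf C^{c}}e^{v}\,dQ_{x,a}\le e^{K}e^{R-R^{1/p}}M$ for $x\in\mathsf B$, with $M:=e^{\gamma_1R_0^{1/p}+K_1}$. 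Since $1/p>1$, the factor $e^{R-R^{1/p}}$ decays super-exponentially in $R$; this is precisely why $w_0$ is taken to be a fractional power of $w_1$.

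Third, I would lower-bound the denominator using $v\ge -w_0-K\ge -R-K$ on $\mathsf C$ and the Doeblin minorization once more, giving $Q_{y,b}[e^{v}]\ge\int_{\mathsf C}e^{v}\,dQ_{y,b}\ge e^{-R-K}\lambda_{\mathsf C}^{-}\mu_{\mathsf C}(\mathsf C)$. Combining the three estimates produces $Q_{x,a}[e^{v}]\le(\kappa+C_1e^{2K})\,Q_{y,b}[e^{v}]$ with $C_1:=\frac{M}{\lambda_{\mathsf C}^{-}\mu_{\mathsf C}(\mathsf C)}e^{2R-R^{1/p}}$, so the required bound follows once $\kappa\le(e^{-2K_0}-C_1)e^{2K}$. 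I would therefore first choose $R$ large enough that $C_1<e^{-2K_0}$, and then $K>K_0$ large enough that $e^{2K}\ge\kappa/(e^{-2K_0}-C_1)$, which yields $Q_{x,a}[e^{v}]\le e^{-2K_0}e^{2K}Q_{y,b}[e^{v}]$ as desired.

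The hard part is the first of these two choices. The coefficient $C_1$ contains the Doeblin constants $\lambda_{\mathsf C}^{-}$ and $\mu_{\mathsf C}(\mathsf C)$, which depend on the enlarged level set $\mathsf C=\mathsf B_{w_0}(R)$ and are \emph{not} controlled uniformly in $R$ by Assumption \ref{ass:local}; making $C_1<e^{-2K_0}$ thus requires the super-exponential decay $e^{2R-R^{1/p}}\to 0$ (guaranteed by $p<1$) to dominate any possible degradation of $1/(\lambda_{\mathsf C}^{-}\mu_{\mathsf C}(\mathsf C))$ as $R$ grows, and verifying that a single admissible $R$ exists is the delicate step, whereas the subsequent choice of $K$ is routine. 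I would also stress that the order of quantifiers is essential: $R$ (together with the reference measure $\mu_{\mathsf C}$) must be fixed before $K$, because the tail term scales like $e^{2K}$, the same order as the target $e^{2(K-K_0)}$; consequently one must argue multiplicatively as above rather than through a lossy additive estimate such as $\ln(1+t)\le t$, which would produce an exponential-in-$K$ term against a linear-in-$K$ bound.
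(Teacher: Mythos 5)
There is a genuine gap, and it sits exactly where you flag it: the choice of $R$. By discarding the nonnegative terms $\mathcal R_{x,a}(w_0)-\mathcal R_{y,b}(-w_0)$ at the outset, you commit to proving the stronger bound $\mathcal R_{x,a}(v)-\mathcal R_{y,b}(v)\le 2(K-K_0)$, and your tail term must then be compared with an \emph{absolute} lower bound on $Q_{y,b}[e^v]$. You obtain that lower bound from the Doeblin minorization, so your coefficient $C_1$ carries the factor $1/\bigl(\lambda_{\mathsf C}^-\mu_{\mathsf C}(\mathsf C)\bigr)$. Assumption \ref{ass:local} imposes no relation at all between these constants for different $R$, so the requirement $C_1<e^{-2K_0}$ cannot be verified from the hypotheses; worse, it actually fails in the paper's own canonical example. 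For the Gaussian kernel of Section \ref{sec:canonical}, the sharpest admissible constants on $\mathsf C=\mathsf B_{w_0}(R)$ satisfy $\lambda_{\mathsf C}^-\mu_{\mathsf C}(\mathsf C)\approx e^{-cR^{1/p}}$ with $c=L(1+\sqrt{\tilde\gamma})^2/\epsilon$, and the constraint $\epsilon\le\frac{\gamma-\tilde\gamma}{\gamma}L^{-1}$ forces $c>1$; hence $C_1\gtrsim e^{2R+(c-1)R^{1/p}}\to\infty$ as $R\to\infty$. The super-exponential decay $e^{2R-R^{1/p}}$ does not dominate the degradation of the Doeblin constants — it loses to it — so no admissible $R$ exists and the argument as written breaks down.

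The paper's proof avoids this trap precisely by \emph{keeping} the terms you dropped. It factors the tail-versus-core comparison as $\frac{Q_{x,a}[e^{w_0}\mathbf 1_{\mathsf C^c}]}{Q_{y,b}[e^{-w_0}\mathbf 1_{\mathsf C}]}=\frac{\theta}{\theta'}\cdot\frac{Q_{x,a}[e^{w_0}]}{Q_{y,b}[e^{-w_0}]}$, where the second factor equals $e^{\mathcal R_{x,a}(w_0)-\mathcal R_{y,b}(-w_0)}$ — exactly the slack the theorem's right-hand side provides — while the relative tail masses $\theta,\theta'$ are controlled uniformly over $x,y\in\mathsf B$ \emph{and} over $R$ by Theorem \ref{th:upperenv}, whose constants $\gamma_2,K_2$ do not depend on $R$ (one gets $\theta/\theta'\le\frac{\gamma_2R_0+K_2}{R-\gamma_2R_0-K_2}$). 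Consequently $R$ is chosen from $K_0$ alone, and the Doeblin constants enter only through the on-$\mathsf C$ ratio $\lambda^+_{\mathsf C}/\lambda^-_{\mathsf C}$, which, once $R$ is fixed, is a harmless constant absorbed into $K$ via \eqref{eq:k'}. If you wish to keep your structure (prove the stronger inequality without the $\mathcal R$-terms), the repair is to replace the Doeblin lower bound on the denominator by a Lyapunov one: by Jensen and Corollary \ref{cor:lya}, $Q_{y,b}[w_0]\le\mathcal R_{y,b}(w_0)\le\hat\gamma_0R_0+\hat K_0$ for $y\in\mathsf B$, so Chebyshev's inequality gives $Q_{y,b}(\mathsf C)\ge\tfrac12$ for all $R$ large; then $C_1=2Me^{2R-R^{1/p}}\to0$, your two-step choice of $R$ and then $K$ goes through, and the Doeblin constants appear only inside $\kappa$, where your choice of $K$ handles them. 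With that single substitution your proof is correct; without it, it is not.
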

\begin{proof}
Let $\mathsf C := \mathsf B_{w_0}(R) \supset \mathsf B = \mathsf B_{w_0}(R_0)$ with $R > R_0$. Let $\mathbf 1_{\mathsf D}(\cdot)$ be the indicator function on $\mathsf X$ for any $\mathsf D \subset \mathsf X$. Then
\begin{align}
 \frac{Q_{x, a} [e^v]}{Q_{y,b}[e^v]} = \frac{Q_{x, a} [e^v \mathbf 1_{\mathsf C}] + Q_{x, a} [e^v \mathbf 1_{\mathsf C^c}]}{Q_{y,b} [e^v \mathbf 1_{\mathsf C}] + Q_{y,b} [e^v \mathbf 1_{\mathsf C^c}]} \leq \frac{Q_{x, a} [e^v \mathbf 1_{\mathsf C}] + Q_{x, a} [e^v \mathbf 1_{\mathsf C^c}]}{Q_{y,b} [e^v \mathbf 1_{\mathsf C}]}. \label{eq:ent:local}
\end{align}
We first consider the second quotient. By $\lvert v \rvert \leq K +w_0$, we obtain
\begin{align*}
 \frac{Q_{x, a}[e^v \mathbf 1_{\mathsf C^c}]}{Q_{y,b}[e^v \mathbf 1_{\mathsf C}]} \leq e^{2 K} \frac{Q_{x, a}[e^{w_0} \mathbf 1_{\mathsf C^c}]}{Q_{y,b}[e^{-w_0} \mathbf 1_{\mathsf C}]} = e^{2 K} \frac{\theta(x, a, \mathsf C) Q_{x, a}[e^{w_0}]}{\theta'(y, b, \mathsf C) Q_{y,b}[e^{-w_0}]}
\end{align*}
where we define 
$
 \theta(x,a, \mathsf C)  := \frac{Q_{x, a}[e^{w_0} \mathbf 1_{\mathsf C^c}]}{Q_{x, a}[e^{w_0}]} \textrm{ and } \theta'(y,b, \mathsf C) :=  \frac{Q_{y, b}[e^{-w_0} \mathbf 1_{\mathsf C}]}{Q_{y, b}[e^{-w_0}]}.
$
By Theorem \ref{th:upperenv}, there exist some constants $\gamma_2 \in (0,1)$ and $K_2 > 0$ such that
\begin{align*}
\theta(x,a,\mathsf C) 
& \leq \lVert \mathbf 1_{\mathsf C^c} \rVert_{w_0} \frac{Q_{x, a} [e^{w_0} w_0]}{Q_{x, a} [e^{w_0}]} 
\leq\lVert \mathbf 1_{\mathsf C^c} \rVert_{w_0} \sup_{\lvert v \rvert \leq w_0} \frac{Q_{x, a} [e^{v} w_0]}{Q_{x, a} [e^{v}]}  \\ 
& \leq \lVert \mathbf 1_{\mathsf C^c} \rVert_{w_0}  (\gamma_2 w_0(x) + K_2).
\end{align*}
Hence,  $\theta(x,a, \mathsf C) \leq  \lVert \mathbf 1_{\mathsf C^c} \rVert_{w_0} \sup_{x \in \mathsf B} (\gamma_2 w_0(x) + K_2) \frac{\gamma_2 R_0 + K_2}{R}.$ Similarly,
\begin{align*}
 \theta'(y, b, \mathsf C) = 1 - \frac{Q_{y, b}[e^{-w_0} \mathbf 1_{\mathsf C^c})}{Q_{y, b}[e^{-w_0}]} \geq 1 - \frac{\gamma_2 R_0 + K_2}{R}.
\end{align*}
Hence, $\sup_{x,y \in \mathsf B, a \in \mathsf A(x), b \in \mathsf A(y)} \frac{\theta(x, a, \mathsf C)}{\theta'(y, b, \mathsf C)} \rightarrow 0$, as $R \rightarrow \infty$, implies that 
for any $K_0 > 0$, we can select sufficiently large $R$ such that
\begin{align}
\ln \frac{\theta(x, a, \mathsf C) }{\theta'(y, b, \mathsf C)} \leq - 2K_0 - \ln 2, \forall x, y \in \mathsf B\, , 
\label{eq:dc}
\end{align}
so that 
\begin{align}
\frac{Q_{x, a}[e^v \mathbf 1_{\mathsf C^c}]}{Q_{y, b}[e^v \mathbf 1_{\mathsf C}]} \leq e^{2(K - K_0) + \mathcal U_{x} (w_0) - \mathcal U_{y}(-w_0) - \ln 2} \label{eq:ent:2}
\end{align}
where $\mathcal U_x (v) := \sup_{a\in A(x)} \mathcal R_{x,a} (v)$ (cf.\ the proof 
of Theorem \ref{th:pe:contraction} in the Appendix). Now we consider the first quotient in \eqref{eq:ent:local}. By Assumption \ref{ass:local}, we immediately have $\frac{Q_{x, a}[e^v \mathbf 1_{\mathsf C}]}{Q_{y, b}[e^v \mathbf 1_{\mathsf C}]} \leq \frac{\lambda_{\mathsf C}^+}{\lambda_{\mathsf C}^-}.$ 
Hence, setting \begin{align}
 K := K_0 + \frac{1}{2}\ln2 + \ln \left(\frac{\lambda_{\mathsf C}^+}{\lambda_{\mathsf C}^-}\right), \label{eq:k'}
\end{align}
we obtain $Q_{x, a}[e^v \mathbf 1_{\mathsf C}]/Q_{y, b}[e^v \mathbf 1_{\mathsf C}] \leq e^{2(K - K_0) + \mathcal U_x(w_0) - \mathcal U_{y}(-w_0) - \ln 2}.$ Together with \eqref{eq:ent:2}, it yields the required inequality: $$\frac{Q_{x, a}[e^{v}] }{Q_{y, b}[e^{v}] }  \leq e^{2(K - K_0) + \mathcal U_x(w_0) -\mathcal U_{y}(-w_0)},$$ where $K$ is chosen according to \eqref{eq:k'}, while $R$ is determined by \eqref{eq:dc}. \quad
\end{proof}

We now investigate the Doeblin-type condition \ue{2} stated in Section \ref{sec:geo:cont} for the upper envelope $\bar{\mathcal R}^{(w,K)}$ of the entropic map.
\begin{proposition}\label{prop:local2}
Let $w: \mathsf X \rightarrow [1,\infty)$ be a $\mathcal B(\mathsf X)$-measurable function and $\mathsf B: = \mathsf B_w(R)$ with some $R > 0$. Suppose Assumption \ref{ass:local} holds. Assume further that $\bar{\mathcal R}^{(w,K)}_{x,a}(w_0) < \infty$ for all $x \in \mathsf B$ and $a \in \mathsf A(x)$. Then there exist a constant $\alpha \in (0,1)$ and a probability measure on $(\mathsf X, \mathcal B(\mathsf X))$ satisfying
\begin{align*}
\bar{\mathcal R}^{(w,K)}_{x,a}(v) - \bar{\mathcal R}^{(w,K)}_{x,a}(u) \geq \alpha \mu[v-u], \forall x \in \mathsf B, a \in \mathsf A(x), v \geq u \in \mathscr B_{1+w_0}.
\end{align*}
\end{proposition}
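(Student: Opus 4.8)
The plan is to exploit the representation of the upper envelope in \eqref{eq:ent:upenv} as a supremum of expectations under \emph{tilted} probability measures, and to reduce the desired one-sided estimate to a uniform Doeblin-type minorization of those tilted measures. For $f \in \mathscr B_w^{(K)}$ and $(x,a) \in \mathsf K$ write $P^f_{x,a}(\mathsf D) := Q_{x,a}[e^f \mathbf 1_{\mathsf D}] / Q_{x,a}[e^f]$, a probability measure, so that $\bar{\mathcal R}^{(w,K)}_{x,a}(v) = \sup_{f \in \mathscr B_w^{(K)}} P^f_{x,a}[v]$. The key claim I would establish is that there exist a constant $\alpha \in (0,1)$ and a probability measure $\mu \in \mathscr P_{1+w_0}$ with the uniform minorization $P^f_{x,a}[g] \geq \alpha\, \mu[g]$ for every nonnegative $g \in \mathscr B_{1+w_0}$, every $f \in \mathscr B_w^{(K)}$, and all $x \in \mathsf B$, $a \in \mathsf A(x)$. (The integrals are finite because $P^f_{x,a}[w_0] \leq \bar{\mathcal R}^{(w,K)}_{x,a}(w_0) < \infty$.)

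Granting this claim, the proposition follows by a near-optimal-tilt argument. Fix $v \geq u$ in $\mathscr B_{1+w_0}$ and $\epsilon > 0$, and choose $f_\epsilon \in \mathscr B_w^{(K)}$ with $P^{f_\epsilon}_{x,a}[u] \geq \bar{\mathcal R}^{(w,K)}_{x,a}(u) - \epsilon$. Then
\begin{align*}
 \bar{\mathcal R}^{(w,K)}_{x,a}(v) - \bar{\mathcal R}^{(w,K)}_{x,a}(u) \geq P^{f_\epsilon}_{x,a}[v] - P^{f_\epsilon}_{x,a}[u] - \epsilon = P^{f_\epsilon}_{x,a}[v-u] - \epsilon \geq \alpha\, \mu[v-u] - \epsilon,
\end{align*}
and letting $\epsilon \to 0$ gives the assertion; shrinking $\alpha$ if necessary keeps $\alpha \in (0,1)$ while preserving the inequality since $\mu[v-u] \geq 0$.

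To prove the minorization I would take the Doeblin set in Assumption \ref{ass:local} to be the level-set $\mathsf C := \mathsf B$ itself and set $\mu(\cdot) := \mu_{\mathsf C}(\cdot \cap \mathsf C)/\mu_{\mathsf C}(\mathsf C)$, a probability measure supported on $\mathsf B$; since $w_0$ is bounded on $\mathsf B$, $\mu \in \mathscr P_{1+w_0}$. Because the ratio defining $P^f_{x,a}$ is invariant under adding a constant to $f$, Lemma \ref{lm:semi} lets me pick the representative with $\lvert f \rvert \leq K + w_0$, so that on $\mathsf C$ we have $e^f \geq e^{-(K + R')}$ with $R' := \sup_{\mathsf B} w_0 < \infty$. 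For nonnegative $g$, restricting the numerator to $\mathsf C$ and using the lower local-Doeblin bound $Q_{x,a}[g \mathbf 1_{\mathsf C}] \geq \lambda_{\mathsf C}^- \mu_{\mathsf C}[g \mathbf 1_{\mathsf C}]$ (which extends from indicator sets to nonnegative functions by monotone approximation) yields
\begin{align*}
 P^f_{x,a}[g] \geq \frac{e^{-(K+R')}\, Q_{x,a}[g \mathbf 1_{\mathsf C}]}{Q_{x,a}[e^f]} \geq \frac{e^{-(K+R')}\lambda_{\mathsf C}^-\, \mu_{\mathsf C}(\mathsf C)}{Q_{x,a}[e^f]}\, \mu[g].
\end{align*}

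The main obstacle is therefore the denominator: I need $Q_{x,a}[e^f]$ bounded \emph{above} by a single constant $M$, uniformly over $f \in \mathscr B_w^{(K)}$ and $x \in \mathsf B$, $a \in \mathsf A(x)$, and this is exactly where the finiteness hypothesis on $\bar{\mathcal R}^{(w,K)}_{x,a}(w_0)$ enters. Using the normalization $f \leq K + w_0$ together with monotonicity and translation invariance of the entropic map gives $\ln Q_{x,a}[e^f] = \mathcal R_{x,a}(f) \leq K + \mathcal R_{x,a}(w_0) \leq K + \bar{\mathcal R}^{(w,K)}_{x,a}(w_0)$, the last step by the definition of the upper envelope applied at $v = w_0$, $u = 0$ together with centralization $\mathcal R_{x,a}(0) = 0$. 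By Theorem \ref{th:upperenv} the quantity $\bar{\mathcal R}^{(w,K)}_{x,a}(w_0)$ is dominated by $\gamma_2 w_0(x) + K_2$, which is uniformly bounded on the level-set $\mathsf B$; hence $Q_{x,a}[e^f] \leq M$ for an explicit constant $M$, and setting $\alpha := e^{-(K+R')}\lambda_{\mathsf C}^- \mu_{\mathsf C}(\mathsf C)/M$ completes the minorization. I expect verifying the uniform (rather than merely pointwise) finiteness of the normalizing constant over $\mathsf B$ to be the only delicate point; everything else is bookkeeping with the two-sided local-Doeblin bounds.
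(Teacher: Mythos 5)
Your proposal is correct and is essentially the paper's own argument: both use the tilted-measure representation \eqref{eq:ent:upenv}, reduce the difference of suprema to a lower bound by (an infimum of) tilted expectations of $v-u$, normalize the tilt via Lemma \ref{lm:semi} so that $\lvert f \rvert \leq K + w_0$, and then apply the local Doeblin minorization to the numerator while bounding the normalizing constant $Q_{x,a}[e^{f}]$ uniformly over $\mathsf B$. The only cosmetic differences are that you restrict the numerator integral to $\mathsf C = \mathsf B$ (yielding $\mu$ as the normalized restriction of $\mu_{\mathsf C}$), whereas the paper keeps the global bound $e^{f} \geq e^{-Kw}$ and reweights $\mu_{\mathsf B}$ by $e^{-Kw}$, and that you make explicit, via Theorem \ref{th:upperenv}, the uniform bound on the denominator that the paper takes for granted when it writes $\max_{x \in \mathsf B} Q_{x,a}[e^{Kw}]$.
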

\begin{proof}
Note that since $\bar{\mathcal R}^{(w,K)}_{x,a}(w_0) < \infty$, we have for all $v \in  \mathscr B_{1+w_0}$, $x \in \mathsf B$ and $a \in \mathsf A(x)$,
\begin{align*}
 \lvert \bar{\mathcal R}^{(w,K)}_{x,a}(v) \rvert \leq  \bar{\mathcal R}^{(w,K)}_{x,a}(\lvert v \rvert) \leq \lVert v \rVert_{1+w_0}  \bar{\mathcal R}^{(w,K)}_{x,a}(1+w_0) < \infty.
\end{align*}

By \eqref{eq:ent:upenv}, we have for all $x \in \mathsf B$ and $v \geq u \in \mathscr B_{1+w_0}$,
\begin{align*}
 \bar{\mathcal R}^{(w,K)}_{x,a}(v) - \bar{\mathcal R}^{(w,K)}_{x,a}(u) = &\sup_{h \in \mathscr B_w^{(K)}} \frac{Q_{x,a}[e^h v]}{Q_{x,a}[e^h]} - \sup_{h' \in \mathscr B_w^{(K)}} \frac{Q_{x,a}[e^{h'} u]}{Q_{x,a}[e^{h'}]} \\
            \geq & \inf_{h' \in \mathscr B_w^{(K)}}  \frac{Q_{x,a} \left[e^{h'} (v - u) \right]}{Q_{x,a}[e^{h'}]}.
\end{align*}
By Remark \ref{rm:doeblin}, Assumption \ref{ass:local} implies that there exist a probability measure $\mu_{\mathsf B}$ and $\alpha_{\mathsf B}$ such that $Q_{x,a}[v] \geq \alpha_{\mathsf B} \mu_{\mathsf B}[v]$ for all nonnegative measurable functions $v$. Hence, for all $x\in \mathsf B$ and $h' \in \mathscr B_w^{(K)}$, we have
\begin{align*}
 \frac{Q_{x,a} \left[e^{h'} (v - u) \right]}{Q_{x,a}[e^{h'}]} \geq  & \frac{\alpha_{\mathsf B} \mu_{\mathsf B}\left[e^{h'} (v - u) \right]}{Q_{x,a}[e^{Kw}]} 
 \geq \frac{\alpha_{\mathsf B} \mu_{\mathsf B}\left[e^{-Kw} (v - u) \right]}{\max_{x \in \mathsf B} Q_{x,a}[e^{Kw}]} \\ =& \frac{\alpha_{\mathsf B} \mu_{\mathsf B}[e^{-Kw}]}{\max_{x \in \mathsf B} Q_{x,a}[e^{Kw}]} \frac{\mu_{\mathsf B}\left[e^{-Kw} (v - u) \right]}{\mu_{\mathsf B}\left[e^{-Kw}  \right]}.
\end{align*}
Hence, $\alpha := \dfrac{\alpha_{\mathsf B} \mu_{\mathsf B}[e^{-Kw}]}{\max_{x \in \mathsf B} Q_{x,a}[e^{Kw}]} \textrm{ and the probability measure }
d \mu := \dfrac{e^{-Kw} d\mu_{\mathsf B}}{\int e^{-Kw} d\mu_{\mathsf B}}$
are the required constant and probability measure respectively. \quad
\end{proof}

The following theorem shows that applying the entropic map, along with an additional growth condition for cost functions (see \eqref{eq:cost} below), the existence of Lyapunov function stated in Assumption \ref{ass:lya} and the local Doeblin condition stated in Assumption \ref{ass:local} are sufficient for Assumption \ref{asmp:subset} and \ref{assp:upper:envelope}. 
\begin{theorem}\label{th:entropic:summary}
 Let $\mathcal R$ be the entropic map with $\lambda = 1$. Suppose Assumption \ref{ass:lya} and \ref{ass:local} hold with a weight function $w_1$. If the cost function $c$ satisfies
 \begin{align}
   \lvert c(x,a) \rvert \leq C w_1^q(x), \forall (x,a) \in \mathsf K, \textrm{ with some } q \in (0,1), \label{eq:cost}
 \end{align}
 then Assumption \ref{asmp:subset} holds with $w_0 = w^p_1$ for any $p \in (q,1)$, and some $ K > 0$, and Assumption \ref{assp:upper:envelope} holds with $w = 1 + K^{-1} w_0$. 
\end{theorem}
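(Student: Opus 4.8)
The plan is to assemble the four preparatory results of this section into verifications of the four component conditions \lya{1}, \lya{2}, \ue{1} and \ue{2}, being careful to commit the constants in a mutually consistent order. Fix once and for all $w_0 := w_1^p$ with $p \in (q,1)$; then the cost bound \eqref{eq:cost} reads $\lvert c \rvert \le C w_1^q = C w_0^{q/p}$ with exponent $q/p \in (0,1)$. To verify \lya{1}, I would first invoke Corollary \ref{cor:lya} to obtain $\hat\gamma_0 \in (0,1)$ and $\hat K_0$ with $\mathcal R_{x,a}(w_0) \le \hat\gamma_0 w_0(x) + \hat K_0$; since $\lambda = 1 > 0$ makes the map convex, the same bound controls $-\mathcal R_{x,a}(-w_0)$. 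As $q/p < 1$, for any $\tilde\gamma_0 \in (0, 1-\hat\gamma_0)$ there is a constant $C_0$ with $C w_0^{q/p} \le \tilde\gamma_0 w_0 + C_0$ (the elementary estimate recorded after \lya{1b}), and adding this to the envelope bounds gives $\mathcal T_{x,a}(w_0) \vee (-\mathcal T_{x,a}(-w_0)) \le \gamma_0 w_0 + K_0$ with $\gamma_0 := \hat\gamma_0 + \tilde\gamma_0 \in (0,1)$ and $K_0 := \hat K_0 + C_0$. This is \lya{1}, and it fixes $\mathsf B_0 = \mathsf B_{w_0}(R_0)$ with $R_0 = 2K_0/(1-\gamma_0)$.

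Next I would verify \lya{2}. Applying Theorem \ref{th:local1} with the $K_0$ just obtained and $\mathsf B = \mathsf B_0$ produces a constant $K > K_0$ for which $\mathcal R_{x,a}(v) - \mathcal R_{y,b}(v) \le 2(K-K_0) + \mathcal R_{x,a}(w_0) - \mathcal R_{y,b}(-w_0)$ holds for all $x,y \in \mathsf B_0$ and all $v$ with $\lvert v \rvert \le w_0 + K$. Translation invariance gives $\mathcal R_{x,a}(w_0 + K) = \mathcal R_{x,a}(w_0) + K$ and $\mathcal R_{y,b}(-w_0 - K) = \mathcal R_{y,b}(-w_0) - K$, so the left-hand side of \lya{2} equals $\mathcal R_{x,a}(w_0) - \mathcal R_{y,b}(-w_0) + 2K - [\mathcal R_{x,a}(v) - \mathcal R_{y,b}(v)]$; substituting the previous inequality bounds this below by $2K_0$, which is exactly \lya{2}. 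This completes Assumption \ref{asmp:subset} and pins down $K$, hence $w = 1 + K^{-1} w_0$.

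With $K$ now fixed, I would verify Assumption \ref{assp:upper:envelope} for the explicit upper envelope \eqref{eq:ent:upenv}. Since Theorem \ref{th:upperenv} holds for every $K > 0$, it applies to the present $K$ and yields $\gamma_2 \in (0,1)$, $K_2 > 0$ with $\bar{\mathcal R}^{(w,K)}_{x,a}(w_0) \le \gamma_2 w_0(x) + K_2$; this is \ue{1} with $\gamma = \gamma_2$, $\bar K = K_2$, and in particular shows $\bar{\mathcal R}^{(w,K)}_{x,a}(w_0) < \infty$ on every level-set. Choosing $R > 2\bar K/(1-\gamma)$ and applying Proposition \ref{prop:local2} on $\mathsf B = \mathsf B_{w_0}(R)$---whose finiteness hypothesis is precisely what was just established, while the local Doeblin property of Assumption \ref{ass:local} holds on every level-set---produces $\alpha \in (0,1)$ and a probability measure $\mu$ (supported on the bounded level-set $\mathsf B$, hence in $\mathscr P_{1+w_0}$) realizing \ue{2}.

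The proof is essentially bookkeeping, since the analytic work lives in the cited results; the one genuinely delicate point is the order of commitments. Because the upper envelope $\bar{\mathcal R}^{(w,K)}$ itself depends on $K$, one must fix $K$ first through \lya{2} (which rests on the Doeblin-type Theorem \ref{th:local1}) and only afterwards invoke Theorem \ref{th:upperenv} to obtain \ue{1}; the fact that Theorem \ref{th:upperenv} is valid for \emph{every} $K$ is precisely what dissolves this apparent circularity. The only other step needing a word is the absorption of the cost: the hypothesis $q < p$ turns $w_1^q$ into $w_0^{q/p}$ with exponent below one, which is what lets \eqref{eq:cost} be folded into the Lyapunov drift \lya{1} without pushing $\gamma_0$ up to $1$.
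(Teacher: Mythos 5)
Your proposal is correct and follows essentially the same route as the paper's own (much terser) proof: Corollary \ref{cor:lya} plus absorption of the cost for \lya{1}, Theorem \ref{th:local1} (via translation invariance) for \lya{2}, Theorem \ref{th:upperenv} with the already-fixed $K$ for \ue{1}, and Proposition \ref{prop:local2} for \ue{2}. If anything, you are more careful than the paper, which handles the $-\mathcal T_{x,a}(-w_0)$ side of \lya{1} and the reduction of \lya{2} to Theorem \ref{th:local1} only implicitly (citing Remark \ref{rm:doeblin} where Theorem \ref{th:local1} is the actual workhorse), and your observation that Theorem \ref{th:upperenv} holds for every $K>0$ — so that fixing $K$ before invoking it breaks the apparent circularity — is exactly the point that makes the bookkeeping sound.
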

\begin{proof}
Fix one $p \in (q,1)$ and let $w_0 = w_1^p$. Then by Corollary \ref{cor:lya}, there exists $\hat \gamma_0 \in (0,1)$ and $\hat K_0 > 0$ satisfying 
$\mathcal U_x(w_0) \leq \hat \gamma_0 w_0(x) + \hat K_0.$ Choosing one $\gamma^{(c)}_0 \in (0,1-\hat \gamma_0)$, there exists a constant $K_0^{(c)} > 0$ satisfying $C w_1^q(x) \leq \gamma^{(c)}_0 w_0(x) + K_0^{(c)}$. Hence, Assumption \ref{asmp:subset} \lya{1} holds with $\gamma_0 := \hat \gamma_0 + \gamma^{(c)}_0 \in (0,1)$ and $K_0 := \hat K_0 + K_0^{(c)}$. Due to Remark \ref{rm:doeblin}, Assumption \ref{asmp:subset} \lya{2} holds with some constant $K > K_0$. Next, by Theorem \ref{th:inv_bound} and \ref{th:upperenv}, Assumption \ref{assp:upper:envelope} \ue{1} holds with $w := 1 + K^{-1} w_0$. Assumption \ref{assp:upper:envelope} \ue{2} holds due to Proposition \ref{prop:local2}. \quad
\end{proof}

\subsection{Discussion}
Most of the existing literature on risk-sensitive MCPs applying the entropic map, considers finite or countable state spaces \cite{avila1998controlled, borkar2002risk,coraluppi2000mixed,fleming1997risk, hernandez1996risk, marcus1997risk, cavazos2010optimality} or bounded cost functions \cite{bauerle2013more}. 
We consider in this paper a more general setting: Borel state-action spaces and unbounded cost functions. The literature under the same setting includes \cite{masi2000infinite,kontoyiannis2003spectral, kontoyiannis2005large, jaskiewicz2007} and \cite{di2008infinite}, with which we provide below a detailed comparison. 

Among others, \citet{kontoyiannis2005large} developed (see also their earlier work \cite{kontoyiannis2003spectral}) a spectral theory of multiplicative Markov processes, where the Poisson equation w.r.t.\ the entropic map (called multiplicative Poisson equation in \cite{kontoyiannis2005large}) plays the central role. Though our assumptions are less general than the assumptions stated in \cite{kontoyiannis2005large, kontoyiannis2003spectral}, our proof that generalizes the Hairer-Mattingly approach \cite[]{hairer2011yet} is conceptually simpler than the one provided in \cite{kontoyiannis2005large, kontoyiannis2003spectral}, and can also be applied to other types of valuation maps. 

To guarantee the existence of a solution to the optimal control problem, \citeauthor{jaskiewicz2007} stated a set of more general conditions in \cite{jaskiewicz2007} than the set stated in this section. However, Condition (B) in \cite{jaskiewicz2007} which assumes the boundedness of iterations is very difficult to verify. In contrast to it, we provide in this section a set of more verifiable conditions that guarantees this boundedness (see Theorems \ref{th:entropic:summary} and \ref{th:inv_bound}) and hence Condition (B) in \cite{jaskiewicz2007}. It is also worth to mention that the cost functions allowed in \cite{jaskiewicz2007} are assumed to be lower bounded (but not upper bounded), which is not as general as our setting.



The assumption (A4) in Section 4 of \citet{di2008infinite} (see also their earlier work \cite{masi2000infinite}) requires a positive continuous density, i.e., there exists a positive function $q$ satisfying $Q(\diff y|x,a) = q(x,a,y) \mu(\diff y)$ for some reference probability measure $\mu$, which implies the local Doeblin condition in Assumption \ref{ass:local}. Hence, our assumption is more general than its counterpart in \cite{di2008infinite}. The assumption (A3) set in Section 3 of \cite{di2008infinite} for the cost function $c$ is implicit and difficult to be verified. On the contrary, the sufficient growth condition \eqref{eq:cost} for $c$, is explicit in form of the weight function $w_1$ w.r.t.\ the entropic map. Note that, in the example provided by \cite{di2008infinite}, the assumption (A3) is also verified with the help of a weight function.

Finally, as an advantage, in comparison with \cite{kontoyiannis2005large, jaskiewicz2007} and \cite{di2008infinite}, the convergence rate of iterations towards the solution to the Poisson equation is explicitly specified by $\bar \alpha$ in Theorem \ref{th:pe:contraction} under the chosen seminorm.

\section{Examples}
 \label{sec:examples}
 In this section, we utilize the following example taken from \cite[Section 6]{di2008infinite} as a canonical example of MCPs. Let $\lVert \cdot \rVert$ denote the Euclidean norm, and $\lVert \cdot \rVert_\infty$ the sup-norm. 
\subsection{A canonical example of MCP}\label{sec:canonical} 
 Let $\mathsf X = \mathbb R^d$ and $\mathsf A$ be a compact subset of a Euclidean space. Consider the following discretized ergodic diffusion $\{ x_t \in \mathbb R^d\}$:
\begin{align}
 X_{t+1} = A X_t + b(X_t, A_t) + D(X_t, A_t) W_t, \textrm{ where } \label{eq:diff}
\end{align}
\begin{itemize}
 \item  $\{ W_t \in \mathbb R^d\}$ is a sequence of i.i.d.\ standard white noise; 
 \item $D: \mathsf K \rightarrow \mathbb R^{d\times d}$ is a continuous bounded matrix-valued function which is uniformly elliptic, i.e., there exists a constant $L>0$ such that
\begin{align}
 L^{-1} \lVert \xi \rVert^2 \leq \xi^\top D^\top(x,a) D(x,a) \xi \leq L \lVert \xi \rVert^2, \forall (x,a) \in \mathsf K, \xi \in \mathbb R^d; \label{eq:D}
\end{align}
\item $b: \mathsf K \rightarrow \mathbb R^d$ is a continuous bounded vector function, i.e., there exists a positive constant $B > 0$ such that $ \lVert b(x,a) \rVert^2 \leq B, \forall (x,a) \in \mathsf K$; and
\item $A$ is a matrix satisfying that there exists a constant $\tilde \gamma \in (0,1)$ such that $\xi^\top A^\top A \xi \leq \tilde \gamma \lVert \xi \rVert^2$, $\forall \xi \in \mathbb R^d$. 
\end{itemize}
Then the transition kernel $Q(\diff y |x,a)$ has the following density w.r.t.\ the Lebesgue measure,
\begin{align}
 q(y | x,a) = (2\pi)^{-d/2} \lvert \Sigma \rvert^{1/2} e^{-\frac{1}{2} (y - A x - b)^\top \Sigma (y- A x -b)}, \textrm{ with } \Sigma := (D D^\top)^{-1}. \label{eq:den}
\end{align}

\begin{remark}\label{rm:weak}\rm
 Note that since for each $x$ and $y$, $a \mapsto q(y | x,a)$ is continuous on $\mathsf A(x)$,  $Q_{x,a}$ is weakly continuous on $\mathsf A(x)$, i.e., for each $x \in \mathsf X$ and bounded $v \in \mathscr B$, $a \mapsto Q_{x,a}(v)$ is continuous on $\mathsf A(x)$.
\end{remark}



\subsection{Entropic maps}\label{sec:ex:entropic}
Let $\mathcal R$ be the entropic map defined in Example \ref{ex:entropic} with $\lambda = 1$. By Theorem \ref{th:entropic:summary}, it is sufficient to verify the existence of Lyapunov function stated in Assumption \ref{ass:lya} and the local Doeblin condition stated in Assumption \ref{ass:local}. 

Among them, the local Doeblin condition is satisfied since the transition kernel $Q$ has a positive continuous density function $q$ w.r.t.\ the Lebesgue measure. 

It remains to verify the the existence of Lyapunov function stated in Assumption \ref{ass:lya}. Take one $\gamma \in (\tilde \gamma, 1)$ and consider the following weight function 
\begin{align}
 \hat w_1(x) = \frac{\epsilon}{2} \lVert x \rVert^2, \textrm{ with some positive } \epsilon \leq \frac{\gamma - \tilde \gamma}{\gamma} L^{-1} < L^{-1}. \label{eq:w1}
\end{align}
Hence, $\Sigma(x,a) - \epsilon I$ is positive definite for all $(x,a) \in \mathsf K$. We show that $\hat w_1$ is a Lyapunov function w.r.t.\ the entropic map as follows. By setting $\tilde x := A x + b$, we obtain
\begin{align*}
 \int Q(\diff y|x,a) e^{\hat w_1(y)} = & (2\pi)^{-d/2} \lvert \Sigma \rvert^{1/2} \int e^{-\frac{1}{2} \left( y^\top (\Sigma -\epsilon I) y - 2 y^\top \Sigma \tilde x^\top + \tilde x^\top \Sigma \tilde x \right)} \diff y \\
 = & \frac{\lvert \Sigma \rvert^{1/2}}{\lvert \Sigma - \epsilon I \rvert^{1/2}} e^{\frac{1}{2} \tilde x^\top \Sigma \left( (\Sigma - \epsilon I)^{-1}  - \Sigma^{-1} \right) \Sigma \tilde x}, \textrm{ which yields} 
\end{align*}
\begin{align}
 Q_{x,a} [e^{\hat w_1}] = \frac{\lvert \Sigma \rvert^{1/2}}{\lvert \Sigma - \epsilon I \rvert^{1/2}} e^{\frac{1}{2} (A x + b)^\top \Sigma \left( (\Sigma - \epsilon I)^{-1}   - \Sigma^{-1} \right) \Sigma (A x + b)}. \label{eq:hatw1}
\end{align}
By \eqref{eq:D} and the choice of $\epsilon$ in \eqref{eq:w1}, we have
\begin{align*}
 \frac{1}{2} x^\top A^\top \Sigma \left( (\Sigma - \epsilon I)^{-1}   - \Sigma^{-1} \right) \Sigma A x \leq \frac{\gamma \epsilon}{2} \lVert x \rVert^2 = \gamma \hat w_1(x), \forall (x,a) \in \mathsf K.
\end{align*}
Finally, due to the uniform boundedness of $b$ and $\frac{\lvert \Sigma \rvert^{1/2}}{\lvert \Sigma - \epsilon I \rvert^{1/2}}$, we can always select some $\gamma_1 \in (\gamma,1)$ and $\hat K > 0$ such that
\begin{align*}
 \ln \left( \int Q(\diff y|x,a) e^{\hat w_1(y)} \right) \leq \gamma_1 \hat w_1(x) + \hat K, \forall (x,a) \in \mathsf K. 
\end{align*}
Hence, Assumption \ref{ass:lya} is verified with $w_1 := 1 + \hat w_1$, $\gamma_1$ and $K_1 := \hat K_1 + 1 - \gamma$. 

A final remark is addressed to Assumption \os{3}, where the lower semicontinuity can be in fact strengthen to be continuity in this example. Note that for each $x \in \mathsf X$, (i) $v \in \mathscr B$, $a \mapsto Q_{x,a}[v]$ is continuous on $\mathsf A(x)$ (see Remark \ref{rm:weak}); and (ii) by \eqref{eq:hatw1}, $a \mapsto Q_{x,a}[\hat W]$ is continuous on $\mathsf A(x)$, where $\hat W := e^{\hat w_1}$. Hence, by \cite[Lemma 8.3.7(a)]{hernandez1999further}, for each $v \in \mathscr B_{\hat W}$, $a \mapsto Q_{x,a}[v]$ is continuous on $\mathsf A(x)$. Note that if $v \in \mathscr B_{w_0}$ satisfies $\lvert v \rvert \leq w_0 + K$, where $w_0 = w_1^p$, $p \in (0,1)$ (see Theorem \ref{th:entropic:summary}), then $e^v \in \mathscr B_{\hat W}$. Hence, we obtain immediately the required weak continuity.



\subsection{Mean-semideviation trade-off}
Recall that in Example \ref{ex:meanvar} the mean-semi\-deviation trade\-off is defined as
\begin{align*}
 \mathcal R_{x,a}(v) := Q_{x,a}[v] + \lambda \left( Q_{x,a} \left[\left(v - Q_{x,a}[v]\right)^r_+\right] \right)^{1/r}, \lambda \in (0,1). 
\end{align*}
Note that since $\mathcal R$ is coherent \cite{ruszczynski2006optimization}, Assumptions \ref{asmp:subset} and \ref{assp:upper:envelope} can be reduced to Assumption \ref{asmp:coherent} (see Remark \ref{rm:contraction}). We consider below the case $r=2$.

\paragraph{Verification of \lya{1a'}}
Consider the canonical example. Let $\tilde x := Ax + b(x,a)$ and set $w_0(x) = \lVert x \rVert^2$. Hence, given $(x,a)$, $Y := \tilde x + D(x,a) W$, where $W$ is a $d$-dimensional white noise, follows a multivariate normal distribution with mean $\tilde x$ and covariance matrix $DD^\top$. We have then
\begin{align*}
 Q_{x,a}[w_0] = & Q_{x,a}[\lVert Y \rVert^2] = \lVert\tilde x \rVert^2 + tr(DD^\top), \textrm{ and} \\
 Q_{x,a}[w_0 - Q_{x,a}[w_0]]^2 = & Q_{x,a}[w_0^2] - \left( Q_{x,a}[w_0] \right)^2 \\
  = & 4 \mathbb E[\tilde x^\top D W]^2 + 4 \mathbb E[\tilde x^\top D W \lVert D W \rVert^2] \\
  & + \mathbb E[\lVert DW \rVert^4] - \left( \mathbb E[\lVert DW \rVert^2] \right)^2.
\end{align*}
Here, the expectation $\mathbb E$ is taken over the white noise $W$. By the assumptions on $A$ and $b$, there exist constants $\epsilon \in (\tilde \gamma, 1)$, $L_0 > 0$ such that $\lVert\tilde x \rVert^2 \leq \epsilon \lVert x \rVert^2 + L_0.$ By the assumption on $D$, there exists a constant $L_1 > 0$ such that
\begin{align*}
 Q_{x,a}[w_0 - Q_{x,a}[w_0]]^2 \leq L_1 (\lVert \tilde x \rVert^2 + 1).
\end{align*}
Hence, we have
\begin{align*}
 \mathcal R_{x,a}(w_0) = & Q_{x,a}[w_0] + \lambda \sqrt{Q_{x,a} \left[\left(w_0 - Q_{x,a}[w_0]\right)_+^2\right] } \\
 \leq & Q_{x,a}[w_0] + \lambda \sqrt{Q_{x,a} \left[\left(w_0 - Q_{x,a}[w_0]\right)^2\right] } \\
  \leq & \epsilon \lVert x \rVert^2 + L_0 + tr(DD^\top) + \lambda\sqrt{ L_1 (\lVert \tilde x \rVert^2 + 1)}
\end{align*}
which implies that there exist positive constants $L_0'$ and $L_1'$ such that
\begin{align}
 \mathcal R_{x,a}(w_0) = \epsilon w_0(x) + L_0' + \lambda L_1'\sqrt{w_0(x) + 1} \label{eq:msemid}
\end{align}
Taking one $\gamma_0 \in (\epsilon, 1)$, since $t \mapsto (\epsilon-\gamma_0)t + L_0' + \lambda L_1' \sqrt{1 + t}$ is concave in $[0, \infty)$ and its maximum is attained at some constant $K_0$, we have
$
 \mathcal R_{x,a}(w_0) \leq \gamma_0 w_0(x) + K_0,
$ which verifies \lya{1a'}.



\paragraph{Verification of \lya{2'}} One easy extension of the subgradient calculation presented by \citet{svindland2009subgradients} (see also \cite[Section 6]{Shen2013}) yields
\begin{align*}
 \mathcal R_{x,a}(v) - \mathcal R_{x,a}(u) \geq \int g(x,a,u, y) (v(y) - u(y)) Q_{x,a}(\diff y), \forall v \geq u \in \mathscr B_{1+w_0}, \textrm{ with}
\end{align*}
\begin{align*}
 g(x,a,u) := \left\{ \begin{array}{ll}
                   1 & \textrm{if $u$ is constant}\\
                1 - \lambda \dfrac{Q_{x,a} \left[ ( u - Q_{x,a}[u] )_+ \right] - ( u - Q_{x,a} u )_+}{\sqrt{Q_{x,a}[( u - Q_{x,a}[u] )_+^{2}]}} & \textrm{otherwise}
                  \end{array}
 \right.
\end{align*}
On the other hand, 
\begin{align*}
 Q_{x,a} \left[ ( u - Q_{x,a} u )_+ \right] - ( u - Q_{x,a} [u] )_+ \leq  Q_{x,a} \left[ ( u - Q_{x,a} u )_+ \right] \leq \sqrt{Q_{x,a} \left[ ( u - Q_{x,a} u )_+^2 \right]}
\end{align*}
implies that $g(x,a,u) \geq 1-\lambda > 0$. Note that in the canonical MCP, $Q_{x,a}(\cdot)$ is supported by a probability measure $\mu$ on any bounded level-set. Hence,
\begin{align*}
 \mathcal R_{x,a}(v) - \mathcal R_{x,a}(u) \geq (1-\lambda) \int (v(y) - u(y)) Q_{x,a}(\diff y) \geq \alpha (1-\lambda) \mu[v- u]
\end{align*}
verifies \lya{2'}.

\paragraph{Verification of \os{3}} The lower semicontinuity can be in fact strengthen to be continuity in this example. Note that for each $x \in \mathsf X$, (i) $Q_{x,a}$ is weakly continuous on $\mathsf A(x)$ (see Remark \ref{rm:weak}) and (ii) by  $a \mapsto Q_{x,a}[w_0]$ is continuous on $\mathsf A(x)$. By \cite[Lemma 8.3.7(a)]{hernandez1999further}, for each $v \in \mathscr B_{1+w_0}$, $a \mapsto Q_{x,a}[v]$ is continuous on $\mathsf A(x)$. It remains to show that for each $x \in \mathsf X$ and $v \in \mathscr B_{1+w_0}$, $a \mapsto \sqrt{Q_{x,a} [(v - Q_{x,a}[v])_+^2]}$ is continuous on $\mathsf A(x)$. Since $t \rightarrow \sqrt{t}$ is continuous on $[0,\infty)$, it is sufficient to show the continuity of the mapping $a \mapsto Q_{x,a} [(v - Q_{x,a}[v])_+^2]$. Now fix $x \in \mathsf X$ and $v \in \mathscr B_{1+w_0}$, and let $\{a_i \in \mathsf A(x) \} $ converging to $a \in \mathsf A(x)$ as $i \rightarrow \infty$. Setting $\mu_i = Q_{x,a_i}$ and $\mu = Q_{x,a}$, we have for each $i$
\begin{align}
  &\left\lvert \mu\left[(v-\mu[v])_+^2 \right] - \mu_i\left[(v-\mu_i[v])_+^2 \right] \right\rvert \nonumber \\
 \leq & \left\lvert \mu\left[(v-\mu[v])_+^2  \right] - \mu_i\left[(v-\mu[v])_+^2  \right] \right\rvert + \left\lvert \mu_i\left[(v-\mu[v])_+^2  \right] - \mu_i\left[(v-\mu_i[v])_+^2  \right]\right\rvert \label{eq:msemi}
\end{align}

It is easy to verify that for each $x \in \mathsf X$, $a \mapsto Q_{x,a}[w_0^2]$ is also continuous on $\mathsf A(x)$. Hence, by \cite[Lemma 8.3.7(a)]{hernandez1999further}, for each $v \in \mathscr B_{1+w_0^2}$, $x \in \mathsf X$, $a \mapsto Q_{x,a}[v]$ is continuous on $\mathsf A(x)$. This implies that the first term in \eqref{eq:msemi} converges to 0 as $i \rightarrow \infty$, since for each $v \in \mathscr B_{1+w_0}$, $(v-\mu[v])_+^2 \in \mathscr B_{1+w_0^2}$.

Now we consider the second term in \eqref{eq:msemi}. Note that for any $x, y \in \mathbb R$, we have $\lvert (x)_+ - (y)_+ \rvert \leq \lvert x - y \rvert$. Hence, for any $x \in \mathsf X$,
\begin{align*}
 \left\lvert (v(x)-\mu[v])_+^2 - (v(x)-\mu_i[v])_+^2 \right\rvert \leq \left\lvert \mu[v] - \mu_i[v] \right\rvert \left\lvert (v(x)-\mu[v])_+ + (v(x)-\mu_i[v])_+ \right\rvert.
\end{align*}
It yields
\begin{align*}
&\left\lvert \mu_i\left[(v-\mu[v])_+^2  \right] - \mu_i\left[(v-\mu_i[v])_+^2  \right]\right\rvert\\
\leq & \lVert (v-\mu[v])_+^2 - (v-\mu_i[v])_+^2 \rVert_{1+w_0} \mu_i[1+w_0] \\
 \leq & \left\lvert \mu[v] - \mu_i[v] \right\rvert \left\lVert (v-\mu[v])_+ + (v-\mu_i[v])_+ \right\rVert_{1+w_0} (\epsilon w_0(x) + L_0' + 1) \\ \leq & \left\lvert \mu[v] - \mu_i[v] \right\rvert \left(2 \lVert v \rVert_{1+w_0} + 2 (\epsilon w_0(x) + L_0' + 1)\right) (\epsilon w_0(x) + L_0' + 1),
\end{align*}
where in the last two inequalities we use the fact that $\mu_i[w_0] \leq \epsilon w_0(x) + L_0'$ (cf.\ \eqref{eq:msemid}),  $\forall i \in \mathbb N$. Finally, by the fact that $\forall v \in \mathscr B_{1+w_0}$, $\mu[v] \rightarrow \mu_i[v]$ as $i \rightarrow \infty$, we obtain the convergence of the second term in \eqref{eq:msemi}.

\subsection{Utility-based shortfall}\label{sec:shortfall}
Consider the utility-based shortfall defined in \eqref{eq:shortfall}
\begin{align}
 \mathcal R_{x,a}(v) = \sup \left\{ m \in \mathbb R \mid \int_{\mathsf X} u(v(y) - m ) Q_{x,a}(\diff y) \geq 0 \right\}, \label{eq:ubsf:def}
\end{align}
under the assumption that $u$ is increasing and there exist constants $l$ and $L$ satisfying $0 < l \leq 1 \leq L <\infty$ and 
\begin{align}
 l \leq \frac{u(x) - u(y)}{x -y} \leq L, \forall x, y \in \mathbb R. \label{eq:ubsf:Lipschitz}
\end{align}
In other words, for each $x,y \in \mathbb R$, we have $u(x) - u(y) = \delta(x,y) (x-y)$, with some  $\delta(x,y) \in [l,L].$

\begin{remark}\rm
 Note that $u$ is not required to be convex, nor concave. Hence, the induced risk preference can be mixed and is, therefore, useful for quantifying human behavior \cite{shen2013b}. One example of $u$ that satisfies the assumption we made above is a piecewise linear function with slopes upper bounded by $L$ and lower bounded by $l$.  
\end{remark}

\citet[Proposition 4.104]{follmer2004stochastic} show that if $u$ is continuous and strictly increasing, the optimal $m^*$ is obtained when the equality holds, i.e., for each $(x,a) \in \mathsf K$, $m^*(x,a) := \mathcal R_{x,a}(v)$ satisfies 
\begin{align}
 \int u(v(y) - m^*(x,a)) Q_{x,a}(\diff y) = 0. \label{eq:ubsf}
\end{align}
Let $m = \mathcal R(v)$ and $m' = \mathcal R(v')$. Hence, for each $(x,a) \in \mathsf K$,
 \begin{align*}
  \int u(v(y) - m(x,a)) Q_{x,a}(\diff y) = \int u(v'(y) - m'(x,a)) Q_{x,a}(\diff y) = 0. 
 \end{align*}
Let $k := (x,a)$. Then, 
 \begin{align*}
  0 = & \int u(v(y) - m(k)) Q_k(\diff y) - \int u(v'(y) - m'(k)) Q_{k}(\diff y) \\
  = &  \int \delta(v,v',k,y) (v(y) - v'(y) - m(k) + m'(k)) Q_k(\diff y),
\end{align*}
where $\delta(v,v',k,y) := \frac{u(v(y) - m(k)) - u(v'(y) - m'(k))}{v(y) - v'(y) - m(k) + m'(k)} \in [l, L]$. Hence,
\begin{align}
 \left(m(k) - m'(k)\right) \int \delta(v,v',k,y) Q_k(\diff y) = \int \delta(v,v',k,y) (v(y) - v'(y)) Q_k(\diff y). \label{eq:ubsf:v}
\end{align}

\paragraph*{Verification of Assumption \ref{asmp:subset}} Let $w_0(x) = e^{\epsilon \lVert x \rVert^2}$ be a weight function and we have shown in Section \ref{sec:ex:entropic} that it satisfies
$ Q_{x,a}[w_0] \leq C (w_0(x))^\gamma$, for some $C > 0$ and $\gamma \in (0,1).$ First, taking $v = w_0$ and $v' = 0$ in \eqref{eq:ubsf:v}, we have $m' = 0$ and $\mathcal R_{x,a}(w_0) = m(x) \leq \frac{L}{l} Q_{x,a}[w_0] \leq \frac{L}{l} C (w_0(x))^\gamma.$
Second, taking $v = 0$ and $v' = -w_0$ in \eqref{eq:ubsf:v}, we have $m = 0$ and 
\begin{align}
 -\mathcal R_{x,a}(-w_0) = -m'(x,a) \leq \frac{L}{l} Q_{x,a}[w_0] \leq \frac{L}{l} C (w_0(x))^\gamma. \label{eq:ubsf:derive}
\end{align}
Hence, given a cost function $c$ satisfying $\lvert c(x,a) \rvert \leq C' (1+(w_0(x))^{\gamma'}), \forall (x,a) \in \mathsf K$, with some $\gamma' \in (0,1)$ and $C' > 0$, we can always find a sufficiently large $K_0$ such that
\begin{align}
 \left(c(x,a) + \mathcal R_{x,a}(w_0)\right) \vee \left(-c(x) - \mathcal R_{x,a}(-w_0)\right) \leq \gamma_0 w_0(x) + K_0, \forall (x,a) \in \mathsf K.
\end{align}
This verifies Assumption \ref{asmp:subset} \lya{1}.

Take $v' = w_0 + K$ in \eqref{eq:ubsf:v}, where $K$ will be specified later. Due to the assumption $v \leq w_0 + K$, we have $m \leq m'$ and for any $k = (x,a) \in \mathsf K$, 
\begin{align*}
 L (m(k) - m'(k)) \leq & \left(m(k) - m'(k)\right) \int \delta(v,v',k,y) Q_k(\diff y) \nonumber\\
 \leq & \int \delta(v,v',k,y) (v(y) - v'(y)) Q_k(\diff y) \nonumber\\
 \leq & l \int (v(y) - v'(y)) Q_k(\diff y) = l \int (v(y)- w_0(y) - K) Q_k(\diff y) \nonumber\\
 \textrm{ which implies } \quad &  \mathcal R_k(v) - \mathcal R_k(w_0+ K) \leq \frac{l}{L} \int (v(y)- w_0(y) - K) Q_k(\diff y). 
\end{align*}
Analogously we obtain $\mathcal R_{k}(-w_0- K) - \mathcal R_{k}(v) \leq \frac{l}{L} \int (-w_0- K -v(y)) Q_k(\diff y).$ 
On the other hand, for the canonical MCP, we have $Q_{x,a}(\cdot) \geq \alpha \mu(\cdot)$ with some probability measure $\mu$ and $\alpha > 0$ for each $x$ in any bounded level-set and $a \in \mathsf A(x)$. Hence, 
$\frac{l}{L} Q_{x,a}[ w_0 + K - v] + \frac{l}{L} Q_{x',a'}[ w_0(y) + K + v] \geq \frac{l}{L} 2 \alpha K$
yields $$\mathcal R_{x,a}(w+K) - \mathcal R_{x,a}(v) + \mathcal R_{x',a'}(v) - \mathcal R_{x',a'}(-w_0-K) \geq 2 \frac{\alpha l}{L} K.$$
Therefore, taking $K := \frac{L}{\alpha l} K_0$, the Assumption \ref{asmp:subset} \lya{2} holds. 

\paragraph*{Verification of Assumption \ref{assp:upper:envelope}} By \eqref{eq:ubsf:v}, we have
\begin{align}
 \mathcal R_{x,a}(v) - \mathcal R_{x,a}(v') \leq & \frac{\int \delta(v,v',x,a,y) Q_{x,a}(\diff y)(v(y)-v'(y))}{\int \delta(v,v',x,a, y) Q_{x,a}(\diff y)} \nonumber \\
 \leq & \sup_{\delta: l \leq \delta(x,a,y) \leq L} \frac{\int \delta(x,a,y) Q_{x,a}(\diff y)(v(y)-v'(y))}{\int \delta(x,a, y) Q_{x,a}(\diff y)} = : \bar{\mathcal R}_{x,a}(v-v') \label{eq:ubsf:upper}
\end{align}
It is easy to see that $\bar{\mathcal R}$ is a coherent risk map. Note that
\begin{align*}
 \bar{\mathcal R}_{x,a}(w_0) = \sup_{\delta: l \leq \delta(x,a,y) \leq L} \frac{\int \delta(x,a,y) w(y) Q_{x,a}(\diff y)}{\int \delta(x,a,y) Q_{x,a}(\diff y)} \leq \frac{L}{l} Q_{x,a}[w_0]
\end{align*}
which implies that $w_0$ satisfies \ue{1} with some constants $\gamma \in (0,1)$ and $\bar K > 0$. 

On the other hand, for any $v\geq v' \in \mathscr B_{1+w_0}$,
\begin{align}
 \bar{\mathcal R}_{x,a}(v) - \bar{\mathcal R}_{x,a}(v') \geq & \inf_{\delta: l \leq \delta(x,a, y) \leq L} \frac{\int \delta(x,a, y) Q_{x,a}(\diff y)(v(y) -v'(y))}{\int \delta(x,a,y) Q_{x,a}(\diff y)}  \nonumber \\
 \geq & \frac{l}{L} Q_{x,a}[v-v'] \geq \frac{\alpha l}{L} \mu[v-v'], \label{eq:ubsf:doeblin}
\end{align}
holds for all $x$ in any bounded level-set and $a \in \mathsf A(x)$. Hence, \ue{2} holds.

\paragraph*{Verification of \os{3}} The lower semicontinuity can be in fact strengthen to be continuity in this example. Let $v \in \mathscr B_{1+w_0}$ satisfying $\lvert v \rvert \leq w_0 + K$. Fix $x \in \mathsf X$, and let $\{ a_i \}$ be a sequence of actions in $\mathsf A(x)$ converging to $a$. Set $\mu_i = Q_{x, a_i}$, $\mu = Q_{x,a}$, $m_i = \mathcal R_{x,a_i}(v)$ and $m = \mathcal R_{x,a}(v)$. It is therefore to show $m_i$ converges to $m$. Indeed, by \eqref{eq:ubsf}, we have
\begin{align*}
 0 = & \int u(v(y) - m_i) \mu_i(\diff y)  - \int u(v(y) - m) \mu(\diff y) \\
  = & \int \left(  u(v(y) - m_i) - u(v(y) - m) \right) \mu_i(\diff y)  + \int u(v(y) - m) \left( \mu_i(\diff y) - \mu(\diff y)\right) \\
  = & \int \delta_i(y) (m - m_i) \mu_i(\diff y) + \int u(v(y) - m) \left( \mu_i(\diff y) - \mu(\diff y)\right) \\
  = & (m-m_i) \int \delta_i(y) \mu_i(\diff y) + \int u(v(y) - m) \left( \mu_i(\diff y) - \mu(\diff y)\right) 
\end{align*}
where $\delta_i(y) := \frac{u(v(y) - m_i) - u(v(y) - m)}{m_i - m} \in [l, L]$. Hence,
\begin{align*}
 \lvert m - m_i \rvert = \frac{\lvert \int u(v(y) - m) \left( \mu_i(\diff y) - \mu(\diff y)\right)  \rvert}{\int \delta_i(y) \mu_i(\diff y)} \leq \frac{1}{l} \lvert \int u(v(y) - m) \left( \mu_i(\diff y) - \mu(\diff y)\right)  \rvert.
\end{align*}
Note that by $\lvert u(v(y) - m) \rvert = \lvert u(v(y) - m) - u(0)\rvert \leq L \lvert v(y) - m \rvert$, we have
\begin{align*}
 \lVert u(v(\cdot) - m) \lVert_{1+ K^{-1} w_0} \leq L \left( \lVert v \rVert_{1+K^{-1}w_0} + m \right) 
\end{align*}
and hence $u(v(\cdot) - m) \in \mathscr B_{1+ K^{-1} w_0}$. On the other hand, since for each $x \in \mathsf X$, (i) $a \mapsto Q_{x,a}[v]$ is continuous for each $v \in \mathscr B$ (see Remark \ref{rm:weak}) and (ii) $a \mapsto Q_{x,a}[w_0]$ is continuous, by Lemma 8.3.7(a) in \cite{hernandez1999further}, we have for any $v \in \mathscr B_{1+K^{-1}w_0}$, $\mu_i[v] \rightarrow \mu[v]$ as $i \rightarrow \infty$. This implies the convergence of $m_i$ to $m$.


\small


\section*{Appendix}

\renewcommand{\thesection}{A}


\paragraph{Proof of Theorem \ref{th:pe:contraction}}
Define $w' := 1 + \beta w_0$ for some positive $\beta \in \mathbb R_+$, whose value will be specified later. Suppose $\lVert v-u \rVert_{s,w'} = A \geq 0$. Due to Lemma \ref{lm:semi} and the fact that adding any constant to $v$ and $u$ will not change the values of both sides of the required inequality, we may assume that $\lVert v-u \rVert_{w'} = A$. Define
\begin{align*}
 \bar{\mathcal U}_x(v) := \sup_{a \in \mathsf A(x)} \bar{\mathcal R}^{(w,K)}_{x,a}(v), x \in \mathsf X.
\end{align*}
It is easy to verify that $\bar{\mathcal U}_x(\cdot)$ is monotone, translation invariant, centralized and coherent on $\mathscr B_w^{(K)}$ for each $x \in \mathsf X$. Hence, we have
\begin{align*}
 \mathcal R^\pi_x(v) - \mathcal R^\pi_x(u) \leq \int \pi(\diff a|x) \bar{\mathcal R}^{(w,K)}_{x,a}(v - u)  \leq \bar{\mathcal U}_x(v-u) \leq \bar{\mathcal U}_x(\lvert v-u \rvert), \forall x \in \mathsf X.
\end{align*}
Switching $v$ and $u$, we obtain 
\begin{align}
 \lvert \mathcal R^\pi_x(v) - \mathcal R^\pi_x(u) \rvert \leq \bar{\mathcal U}_x(\lvert v-u \rvert) \leq \lVert v - u\rVert_{w'} \bar{\mathcal U}_x(w'), \forall x \in \mathsf X. \label{eq:abs}
\end{align}
We consider the following two cases. 

Case I: $w_0(x) + w_0(y) \geq R$ and set $\gamma_0 := \gamma + \frac{2 \bar K}{R} \in (0,1)$ and $\gamma_1 := \frac{2 + \beta R \gamma_0}{2 + \beta R}$ for some $\beta> 0$. It is easy to verify that $\gamma_1 \in (0,1)$. Then \eqref{eq:abs} yields
\begin{align}
 &\lvert \mathcal R^\pi_x(v) - \mathcal R^\pi_x(u) - \mathcal R^\pi_y(v) + \mathcal R^\pi_y(u) \rvert \nonumber \leq \lvert \mathcal R^\pi_x(v) - \mathcal R^\pi_x(u) \rvert + \lvert \mathcal R^\pi_y(v) - \mathcal R^\pi_y(u) \rvert\nonumber \\
\leq & A (2 + \beta \bar{\mathcal U}_x(w_0) + \beta \bar{\mathcal U}_y(w_0)) \leq A (2 + \beta \gamma w_0(x) + \beta \gamma w_0(y) + 2 \beta \bar K) \nonumber\\
\leq & A(2 + \beta \gamma_0 w_0(x) + \gamma_0 w_0(y)) \leq A \gamma_1 (w'(x) + w'(y)). \label{eq:step1}
\end{align}

Case II: $w_0(x) + w_0(y) \leq R$. Hence both $x$ and $y$ are in the subset $\mathsf B$. We define for all $x \in \mathsf B$, $
\tilde{\mathcal R}_x^\pi(v) := \dfrac{1}{1-\alpha} \mathcal R_x^\pi(v) - \dfrac{\alpha}{1-\alpha}\mu(v), \textrm{and } 
\tilde{\mathcal U}_x(v) := \dfrac{1}{1-\alpha} \bar{\mathcal U}_x(v) - \dfrac{\alpha}{1-\alpha}\mu(v).$
Hence, we have $\tilde{\mathcal R}_x^\pi(v) - \tilde{\mathcal R}_x^\pi(u) \leq \tilde{\mathcal U}_x(v - u)$. By Assumption \ue{2}, 
$
 \tilde{\mathcal U}_x(v) - \tilde{\mathcal U}_x(u) \geq 0, \forall v \geq u \in \mathscr B_{1+w_0}, 
$
shows that $\tilde{\mathcal U}_x(\cdot)$ is monotone. It is also easy to verify that $\tilde{\mathcal U}_x(\cdot)$ is translation invariant, centralized and coherent. Hence, 
\begin{align*}
 \lvert \mathcal R^\pi_x(v) - \mathcal R^\pi_x(u) - \mathcal R^\pi_y(v) + \mathcal R^\pi_y(u) \rvert = & (1-\alpha)  \lvert \tilde{\mathcal R}_x^\pi(v) - \tilde{\mathcal R}_x^\pi(u) - \tilde{\mathcal R}_y^\pi(v) + \tilde{\mathcal R}_y^\pi(u) \rvert \\
\leq &  (1-\alpha)  \lvert \tilde{\mathcal R}_x^\pi(v) - \tilde{\mathcal R}_x^\pi(u) \rvert  +  (1-\alpha) \lvert \tilde{\mathcal R}_y^\pi(v) - \tilde{\mathcal R}_y^\pi(u) \rvert \\
\leq & (1-\alpha) \tilde{\mathcal U}_x( \lvert v - u \rvert ) + (1-\alpha) \tilde{\mathcal U}_y( \lvert v - u \rvert ) \\
\leq & 2 A (1-\alpha) + A (1-\alpha) \beta \left( \tilde{\mathcal U}_x( w_0) + \tilde{\mathcal U}_y( w_0) \right).
\end{align*}
Note that since $(1-\alpha)  \tilde{\mathcal U}_x( w_0) \leq \bar{\mathcal U}_x(w_0)$ holds for all $x \in \mathsf B$, we obtain
\begin{align}
\lvert \mathcal R^\pi_x(v) - \mathcal R^\pi_x(u) - \mathcal R^\pi_y(v) + \mathcal R^\pi_y(u) \rvert \leq &  2 A (1-\alpha) + A \beta \left( \bar{\mathcal U}_x( w_0) + \bar{\mathcal U}_y( w_0) \right) \label{eq:w0a}\\ 
\leq & 2 A (1-\alpha) + A \beta (\gamma(w_0(x) + w_0(y)) + 2 \bar K). \nonumber
\end{align}
We select $\beta := \frac{\alpha_0}{\bar K}$ for some $\alpha_0 \in (0, \alpha)$. Setting $\gamma_2 := (1-\alpha + \alpha_0) \vee \gamma \in (0,1)$ yields for all $x \neq y$
\begin{align}
& \lvert \mathcal R^\pi_x(v) - \mathcal R^\pi_x(u) - \mathcal R^\pi_y(v) + \mathcal R^\pi_y(u) \rvert \nonumber \\
\leq & 2 A(1-\alpha + \alpha_0) + A \gamma \beta(w_0(x) + w_0(y)) \leq A \gamma_2(w'(x) + w'(y)).\label{eq:step2}
\end{align}

Hence, setting $\bar \alpha := \gamma_1 \vee \gamma_2 < 1$, \eqref{eq:step1} and \eqref{eq:step2} imply for all $x \neq y$
\begin{align*}
 \lvert \mathcal R^\pi_x(v) - \mathcal R^\pi_x(u) - \mathcal R^\pi_y(v) + \mathcal R^\pi_y(u) \rvert \leq \lVert v -u \rVert_{s,w'} \bar \alpha (w'(x) + w'(y)),
\end{align*}
the required inequality. \quad 
\endproof

Recall that for any unbounded nonnegative $\mathcal B(\mathsf X)$-measurable function $w$ and any real number $R \in \mathbb R$, we define $\mathsf B_w(R) :=  \left\{ x \in \mathsf X | w(x) \leq R \right\}$ 
and $\mathsf B_w^c(R)$ its complementary set.

\paragraph{Proof of Theorem \ref{th:upperenv}} Due to Assumption \ref{ass:lya}, for any $\lambda \in (\gamma_1,1)$, we have
\begin{align*}
\mathcal R_{x,a}(w_1) \leq \lambda w_1(x), \forall x \in \mathsf B_{w_1}^c(A), A:= \frac{K_1}{\lambda - \gamma_1}, a \in \mathsf A(x).
\end{align*}
It implies that for all $x \in \mathsf B_{w_1}^c(A), a \in \mathsf A(x)$, 
\begin{align}
\label{eq:en:0b} & \int_{\mathsf B_{w_1}^c(\lambda w_1(x))} Q_{x,a}(\diff y) \left( e^{w_1(y)-\lambda w_1(x)} - 1\right) \\ 
 \leq & \int_{\mathsf B_{w_1}(\lambda w_1(x))} Q_{x,a}(\diff y) \left( 1- e^{w_1(y)-\lambda w_1(x)} \right).  \nonumber
\end{align}

Taking some $\gamma_2 \in (\lambda^p, 1)$, by the definition of $w_0$ ($:=w_1^p$), we have then
\begin{align}
 \mathsf B_{w_0}^c(\gamma_2 w_0(x)) \subset \mathsf B_{w_1}^c(\lambda w_1(x)), \forall x \in \mathsf X. \label{eq:en:setb}
\end{align}
Indeed, for any $y \in  \mathsf B_{w_0}^c(\gamma_2 w_0(x))$, it follows that $w_0(y) > \gamma_2 w_0(x)$, which is equivalent to
$w_1(y) > (\gamma_2)^{1/p} w_1(x) > \lambda w_1(x)$. Hence, $y \in  \mathsf B_{w_1}^c(\lambda w_1(x))$ as well. Before continuing the proof, we state and prove two lemmas. 
\begin{lemma}\label{lm:en:1b}
For any $\eta \in (0, 1 - \lambda)$, $p \in (0,1)$, $K \geq 0$ and $\gamma_2 \in ((\lambda + \eta)^p,1)$, there exists a constant $R_1 > 0$ such that for all $y \in \mathsf B_{w_0}^c(\gamma_2 w_0(x))$, $x \in \mathsf B_{w_1}^c(R)$ and $R \geq R_1$, 
\begin{align*}
e^{w_0(y) + K + \eta w_1(x)} \left( w_0(y) - \gamma_2 w_0(x)\right) \leq e^{w_1(y) - \lambda w_1(x)} - 1.
\end{align*}
\end{lemma}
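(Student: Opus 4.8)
The plan is to reduce the claimed inequality, which involves $x$ and $y$ only through the scalars $s := w_1(x)$ and $t := w_1(y)$, to a one-variable estimate in $t$ valid for all large $t$. First I would translate the two set-membership hypotheses into scalar inequalities. Since $w_0 = w_1^p$, the condition $y \in \mathsf B_{w_0}^c(\gamma_2 w_0(x))$ reads $t^p > \gamma_2 s^p$, i.e.\ $t > \kappa s$ with $\kappa := \gamma_2^{1/p}$; because $\gamma_2 \in ((\lambda+\eta)^p,1)$ we have $\kappa \in (\lambda+\eta,1)$, so in particular $\kappa > \lambda + \eta > \lambda$. The condition $x \in \mathsf B_{w_1}^c(R)$ reads $s > R$, whence $t > \kappa R$. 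After these substitutions the target becomes
\[
e^{\,t^p + K + \eta s}\,(t^p - \gamma_2 s^p) \le e^{\,t-\lambda s} - 1,
\]
and note $t^p - \gamma_2 s^p > 0$ throughout this region.

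Next I would split the right-hand side. For $R$ large the gap satisfies $t-\lambda s > (\kappa-\lambda)s > (\kappa-\lambda)R$, which exceeds $\ln 2$; then $e^{\,t-\lambda s}-1 \ge \tfrac12 e^{\,t-\lambda s}$ and simultaneously $1 \le \tfrac12 e^{\,t-\lambda s}$. Hence it suffices to prove $e^{\,t^p+K+\eta s}(t^p-\gamma_2 s^p) \le \tfrac12 e^{\,t-\lambda s}$, which after taking logarithms and using $\ln(t^p-\gamma_2 s^p)\le p\ln t$ reduces to
\[
\ln 2 + t^p + K + p\ln t \le t - (\lambda+\eta)s.
\]
The key move is then to eliminate $s$: since $s < t/\kappa$ and $\theta := (\lambda+\eta)/\kappa \in (0,1)$, we have $(\lambda+\eta)s < \theta t$, so the right-hand side is at least $(1-\theta)t$. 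It therefore remains to verify the single-variable inequality
\[
h(t) := (1-\theta)t - t^p - p\ln t - K - \ln 2 \ \ge\ 0 .
\]
Because $1-\theta>0$ and $p\in(0,1)$, the linear term dominates $t^p$ and $p\ln t$, so $h(t)\to+\infty$ and there is a threshold $T_0$ with $h(t)\ge 0$ for $t \ge T_0$. Setting $R_1 := \max\{\,T_0/\kappa,\ \ln 2/(\kappa-\lambda)\,\}$ guarantees that whenever $R\ge R_1$, $s>R$ and $t>\kappa s$, one has $t>T_0$ and $t-\lambda s>\ln 2$, so all the preceding estimates hold and the conclusion follows.

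The only genuine subtlety is organizing the two-variable problem so that $s$ is controlled by $t$; this is exactly where the strict separation $\kappa > \lambda+\eta$ (equivalently $\gamma_2 > (\lambda+\eta)^p$) enters, and it is what produces the superlinear margin $(1-\theta)t$ that beats the sublinear terms $t^p$ and $p\ln t$. Everything after this reduction is routine calculus, so I do not expect any step to pose a real obstacle; the care required is mainly in checking that the chosen $R_1$ depends only on the fixed parameters $\lambda,\eta,p,K,\gamma_2$ and not on $x$, $y$, or $R$.
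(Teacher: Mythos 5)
Your proof is correct and follows essentially the same route as the paper's: after absorbing the prefactor via $\ln(t^p-\gamma_2 s^p)\le p\ln t$ and reserving a $\ln 2$ margin to handle the $-1$, your reduced inequality $\ln 2 + t^p + K + p\ln t \le t-(\lambda+\eta)s$ is exactly the paper's sufficient condition (its Eq.~(A.3)), and both arguments then use the region condition $w_1(y)>\gamma_2^{1/p}w_1(x)$ to collapse to a one-variable inequality where the linear term dominates the sublinear ones. The only cosmetic difference is that the paper makes the domination explicit via the bound $x^p+p\ln x\le \epsilon x+D$ (yielding a closed-form $R_1$), whereas you argue it asymptotically in $t$.
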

\begin{proof}
It is sufficient to show that there exists a constant $R_1 > 0$ satisfying
\begin{align}
w_0(y) + \ln w_0(y) + K + \ln 2 + \eta w_1(x) \leq w_1(y) - \lambda w_1(x) \label{eq:en:3b}
\end{align}
for all $y \in \mathsf B_{w_0}^c(\gamma_2 w_0(x))$, $x \in \mathsf B_{w_1}^c(R)$ and $R \geq R_1 $. Note that for any $p \in (0,1)$ and $\epsilon \in (0,1)$, there exists a constant $D$ (depending on $p$ and $\epsilon$) satisfying $x^p + p \ln x \leq \epsilon x + D, \forall x \geq 1,$ which implies $w_0(x) + \ln w_0(x) \leq \epsilon w_1(x) + D, \forall x \in \mathsf X$. Hence, for all $y \in \mathsf B_{w_0}^c(\gamma_2 w_0(x)) $, we have
\begin{align*}
  & w_1(y) - w_0(y) - \ln w_0(y) - (\lambda +\eta) w_1(x)  \\
 \geq & (1-\epsilon)w_1(y) - (\lambda + \eta) w_1(x) - D 
           \geq \left((1-\epsilon) \gamma_2^{1/p} - \lambda - \eta \right) w_1(x) - D.
\end{align*}
Choosing $\gamma_2 \in ((\lambda + \eta)^p,1) $, $\epsilon < 1 - \frac{\lambda + \eta}{\gamma_2^{1/p}}$ and $R_1 := \frac{D + K + \ln 2}{(1-\epsilon) \gamma_2^{1/p} - \lambda - \eta}$, \eqref{eq:en:3b} holds for all $y \in \mathsf B_{w_0}^c(\gamma_2 w_0(x))$, $x \in  \mathsf B_w^c(R)$ and $R \geq R_1$. \quad
\end{proof}
\begin{lemma}\label{lm:en:2b}
For any $\eta >0$, $p \in (0,1)$, $\gamma_2 \in (\lambda^p, 1)$ and $K \geq 0$, there exists a constant $R_2$ such that for all $y \in \mathsf B_{w_1}(\lambda w_1(x))$, $x \in \mathsf B_{w_1}^c(R)$ and $R \geq R_2$,
\begin{align}
e^{- w_0(y) + \eta w_1(x) - K} \left( \gamma_2 w_0(x) - w_0(y) \right) \geq 1 - e^{w_1(y) - \lambda w_1(x)}. \label{eq:w1:2b}
\end{align}
\end{lemma}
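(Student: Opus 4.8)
The plan is to exploit a sign comparison first and then reduce the whole statement to an elementary one-variable estimate. Since $y \in \mathsf B_{w_1}(\lambda w_1(x))$ we have $w_1(y) \le \lambda w_1(x)$, so the right-hand side $1 - e^{w_1(y) - \lambda w_1(x)}$ lies in $[0,1)$; moreover $w_0(y) = w_1(y)^p \le \lambda^p w_1(x)^p = \lambda^p w_0(x)$, and because $\gamma_2 > \lambda^p$ the prefactor obeys $\gamma_2 w_0(x) - w_0(y) \ge (\gamma_2 - \lambda^p) w_0(x) > 0$. Hence both sides of \eqref{eq:w1:2b} are nonnegative, and it suffices to prove that the left-hand side is at least $1$, uniformly for $w_1(x)$ large.

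Next I would minimise the left-hand side over the admissible $y$. Writing $s := w_1(x)$ and $u := w_0(y)$, the constraint becomes $u \in [1, \lambda^p s^p]$, and the left-hand side equals $e^{\eta s - K} g(u)$ with $g(u) := e^{-u}(\gamma_2 s^p - u)$. The factor $e^{\eta s - K}$ does not depend on $u$, and $g'(u) = e^{-u}(u - \gamma_2 s^p - 1) < 0$ throughout the admissible range (since $u \le \lambda^p s^p < \gamma_2 s^p$), so $g$ is strictly decreasing and its minimum is attained at the right endpoint $u = \lambda^p s^p$, that is, at $w_1(y) = \lambda w_1(x)$. At this worst case the left-hand side reduces to $(\gamma_2 - \lambda^p)\, s^p\, e^{\eta s - \lambda^p s^p - K}$.

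It then remains to show this quantity exceeds $1$ for all sufficiently large $s$. Because $p \in (0,1)$, the sublinear term $\lambda^p s^p$ is dominated by the linear term $\eta s$ (here $\eta > 0$ is essential), so $\eta s - \lambda^p s^p \to +\infty$, while the polynomial prefactor $s^p \to \infty$ as well; hence $(\gamma_2 - \lambda^p)\, s^p\, e^{\eta s - \lambda^p s^p - K} \to +\infty$. Choosing $R_2$ to be any threshold beyond which this expression is at least $1$ then yields the claim for every $x$ with $w_1(x) > R \ge R_2$. The only genuine point is the reduction to the extremal configuration $w_1(y) = \lambda w_1(x)$; once that is identified, the required growth is exactly the sublinear-versus-linear comparison already invoked for Lemma \ref{lm:en:1b} (via the inequality $x^p + p\ln x \le \epsilon x + D$), so I expect no serious obstacle beyond bookkeeping.
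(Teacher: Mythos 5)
Your proof is correct, and its opening move coincides with the paper's: since $e^{w_1(y)-\lambda w_1(x)}>0$, the right-hand side of \eqref{eq:w1:2b} is at most $1$, so it suffices to push the left-hand side above $1$ on the admissible set. After that the two arguments genuinely diverge. You fix $s=w_1(x)$ and minimize $e^{\eta s-K}e^{-u}(\gamma_2 s^p-u)$ exactly over the admissible $u=w_0(y)\in[1,\lambda^p s^p]$ (your derivative computation correctly places the minimum at the endpoint $u=\lambda^p s^p$), and then let $s\to\infty$, using that the linear term $\eta s$ dominates $\lambda^p s^p$. The paper never minimizes in $y$: it first replaces the linear term $\eta w_1(x)$ by the sublinear term $\gamma_2 w_0(x)$ via the elementary inequality $\frac{\gamma_2}{\eta}t^p\le t+D$ for $t\ge 1$, which bounds the left-hand side from below by $e^{-K-D}\,z e^{z}$ in the single variable $z:=\gamma_2 w_0(x)-w_0(y)\ge(\gamma_2-\lambda^p)w_0(x)$, and then uses monotonicity of $z\mapsto ze^z$ to read off a threshold explicitly ($\tilde R_2 e^{\tilde R_2}=e^{K+D}$, then $R_2=\tilde R_2^{1/p}$). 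The trade-off: the paper's substitution yields a concrete, quantitative $R_2$, whereas yours is merely ``large enough'' (though it could be made explicit with the same inequality $t^p\le\epsilon t+D$ you cite from Lemma \ref{lm:en:1b}); in exchange, your exact extremal reduction is cleaner and incidentally sidesteps a small bookkeeping slip in the paper's proof, whose stated threshold drops the factor $\gamma_2-\lambda^p$ (one actually needs $(\gamma_2-\lambda^p)w_0(x)\ge\tilde R_2$, i.e.\ $w_0(x)\ge\tilde R_2/(\gamma_2-\lambda^p)$, not just $w_0(x)>\tilde R_2$). Both arguments are elementary and of comparable length, so this is a difference of decomposition and execution rather than of substance.
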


\begin{proof}
It is sufficient to show that $e^{- w_0(y) + \eta w_1(x) - K} \left( \gamma_2 w_0(x) - w_0(y) \right) \geq 1$ under the same condition. Note that there exists a constant $D > 0$ such that
\begin{align*}
\frac{\gamma_2}{\eta} x^p \leq x + D, \forall x \geq 1,
\end{align*}
which yields $- w_0(y) + \eta w_1(x) - K \geq - w_0(y) + \gamma_2 w_0(x) - K - D $ and hence, 
\begin{align*}
e^{- w_0(y) + \eta w_1(x) - K} \left( \gamma_2 w_0(x) - w_0(y) \right) \geq e^{\gamma_2 w_0(x) - w_0(y) - K - D} \left( \gamma_2 w_0(x) - w_0(y) \right).
\end{align*}
For all $y \in \mathsf B_{w_1}(\lambda w_1(x))$, we have $\gamma_2 w_0(x) - w_0(y) \geq (\gamma_2 - \lambda^p) w_0(x).$ 
Hence, $$e^{\gamma_2 w_0(x) - w_0(y) - K - D} \left( \gamma_2 w_0(x) - w_0(y) \right)  \geq e^{(\gamma_2 - \lambda^p) w_0(x) - K - D} (\gamma_2 - \lambda^p) w_0(x).$$ Due to the fact that $g(x) = e^x \cdot x $ is an increasing function on $\mathbb R_+$, we can choose $\tilde R_2 > 0$ such that $e^{\tilde R_2} \cdot \tilde R_2 = e^{K+ D}$. Hence, we have for all $y \in \mathsf B_{w_1}(\lambda w_1(x))$, $x \in \mathsf B_{w_0}^c(\tilde R)$ and $\tilde R \geq \tilde R_2$, $e^{- w_0(y) + \eta w_1(x) - K} \left( \gamma_2 w_0(x) - w_0(y) \right) \geq 1$ holds. Finally, setting $R_2 = \tilde R_2^{1/p}$, the assertion is obtained. \quad
\end{proof}

Hence, by Lemma \ref{lm:en:1b} and \ref{lm:en:2b}, for all $x \in \mathsf B_{w_1}^c(R_1 \vee R_2 \vee A)$ and $a \in \mathsf A(x)$,
\begin{align*}
&\int_{\mathsf B^c_{w_0}(\gamma_2 w_0(x))} Q_{x,a}(\diff y)  e^{w_0(y) + K + \eta w_1(x)} \left( w_0(y) - \gamma_2 w_0(x)\right) \\ \textrm{(Lemma \ref{lm:en:1b})} \quad \leq & \int_{\mathsf B^c_{w_0}(\gamma_2 w_0(x))} Q_{x,a}(\diff y) \left( e^{w_1(y) -\lambda w_1(x)} -1  \right) \\
 \textrm{\eqref{eq:en:setb}} \quad \leq & \int_{\mathsf B^c_{w_1}(\lambda w_1(x))} Q_{x,a}(\diff y) \left( e^{w_1(y) -\lambda w_1(x)} -1  \right) \\
 \textrm{\eqref{eq:en:0b}} \quad \leq & \int_{\mathsf B_{w_1}(\lambda w_1(x))} Q_{x,a}(\diff y) \left( 1- e^{w_1(y) -\lambda w_1(x)}  \right) \\ \textrm{(Lemma \ref{lm:en:2b})} \quad \leq & \int_{\mathsf B_{w_1}( \lambda w_1(x))} Q_{x,a}(\diff y) e^{- w_0(y) + \eta w_1(x) - K} \left( \gamma_2 w_0(x) - w_0(y) \right) \\
 \textrm{\eqref{eq:en:setb}} \quad \leq & \int_{\mathsf B_{w_0}( \gamma_2 w_0(x))} Q_{x,a}(\diff y) e^{- w_0(y) + \eta w_1(x) - K} \left( \gamma_2 w_0(x) - w_0(y) \right) ,
\end{align*}
which implies that for all $f \in \mathscr B_{w_0}$ satisfying $\lvert f \rvert \leq w_0 + K$,
\begin{align}
 \int Q_{x,a}(\diff y) e^{f(y)} \left( w_0(y) - \gamma_2 w_0(x)\right) \leq 0, \forall x \in \mathsf B_{w_1}^c(R_1 \vee R_2 \vee A). \label{eq:gamma2}
\end{align}
Finally, for all $x \in \mathsf B_{w_1}(R_1 \vee R_2 \vee A)$ and $a \in \mathsf A(x)$ and $f \in \mathscr B_{w_0}$ satisfying $\lvert f \rvert \leq w_0 + K$, 
\begin{align*}
\frac{Q_{x,a} [e^{f} w_0]}{Q_{x,a}[e^f]} \leq \frac{Q_{x,a} [e^{w_0 + K} w_0]}{Q_{x,a}[e^{-w_0 - K}]} \leq e^{2K} Q_{x,a} [e^{w_0} w_0] \cdot Q_{x,a}[e^{w_0}]
\end{align*}
Using the fact that there exists some constant $D > 0$ satisfying
\begin{align*}
x^p + p \ln x \leq x + D, \forall x \geq 1,
\end{align*}
we obtain that $Q_{x,a} [e^{w_0} w_0] \leq e^D Q_{x,a} (e^{w_1})$ which is upper bounded on $\mathsf B_{w_1}(R_1 \vee R_2 \vee A)$. Hence, there exists a $K_2 > 0$ such that for all $f \in \mathscr B_{w_0}$ satisfying $\lvert f \rvert \leq w_0 + K$,
\begin{align*}
 \frac{Q_{x,a} [e^{f} w_0]}{Q_{x,a}[e^f]}  \leq K_2, \forall x \in  \mathsf B_{w_1}(R_1 \vee R_2 \vee A),
\end{align*}
which together with \eqref{eq:gamma2} implies the required inequality. \quad \endproof

\end{document}